\newcommand{\N}{\mathbb{N}}
\newcommand{\PP}{\mathbb{P}}
\newcommand{\Q}{\mathbb{Q}}
\newcommand{\Z}{\mathbb{Z}}
\newcommand{\sE}{\mathcal{E}}
\newcommand{\sF}{\mathcal{F}}
\newcommand{\sG}{\mathcal{G}}
\newcommand{\sO}{\mathcal{O}}
\newcommand{\sP}{\mathcal{P}}
\newcommand{\sR}{\mathcal{R}}
\newcommand{\ul}[1]{{\underline{#1}}}
\newcommand{\Chow}{\operatorname{\mathbf{Chow}}}
\newcommand{\KMM}{\operatorname{\mathbf{KMM}}}
\newcommand{\KM}{\operatorname{\mathbf{KM}}}
\newcommand{\DM}{\operatorname{\mathbf{DM}}}
\newcommand{\DA}{\operatorname{\mathbf{DA}}}
\newcommand{\logDM}{\operatorname{\mathbf{logDM}}}
\newcommand{\Hom}{\operatorname{Hom}}
\newcommand{\Ker}{\operatorname{Ker}}
\newcommand{\Coker}{\operatorname{Coker}}
\newcommand{\Spec}{\operatorname{Spec}}
\newcommand{\Proj}{\operatorname{\mathbf{Proj}}}
\newcommand{\Sm}{\operatorname{\mathbf{Sm}}}
\newcommand{\eff}{{\operatorname{eff}}}
\newcommand{\Zar}{{\operatorname{Zar}}}
\newcommand{\et}{{\operatorname{\acute{e}t}}}
\newcommand{\ch}{{\operatorname{ch}}}
\newcommand{\CH}{{\operatorname{CH}}}
\renewcommand{\lim}{\operatornamewithlimits{\varprojlim}}
\newcommand{\ol}{\overline}
\renewcommand{\epsilon}{\varepsilon}
\newcommand{\gm}{{\operatorname{gm}}}
\newcommand{\bcube}{{\ol{\square}}}
\newcounter{spec}
{\end{list}}%
\newtheorem{lemma}{Lemma}[section]
\newtheorem{Th}{Theorem}
\newtheorem{RmConj}[Th]{Remark/Conjecture}
\newtheorem{thm}[lemma]{Theorem}
\newtheorem{prop}[lemma]{Proposition}
\newtheorem{cor}[lemma]{Corollary}
\newtheorem{conj}[lemma]{Conjecture}
\theoremstyle{definition}
\newtheorem{defn}[lemma]{Definition}
\theoremstyle{remark}
\newtheorem{example}[lemma]{Example}
\numberwithin{equation}{section}
\title[Derived invariants and motives, Part \rm I\hspace{-.01em}I]{Derived invariants and motives, Part \rm I\hspace{-.01em}I integral derived invariants and some applications}\author{Keiho Matsumoto}
\begin{document}
\maketitle

\begin{abstract}
In this paper we construct new derived invariants with integral coefficients using the theory of motifs, and give several applications. Specifically, we obtain the following results: For complex algebraic surfaces, we prove that certain torsion in the abelianized fundamental group is a derived invariant. We prove that the collection of Hodge-Witt cohomology groups is a derived invariant. In particular, Hodge-Witt reduction and ordinary reduction are preserved by derived equivalence when the characteristic is sufficiently large. Finally, using the techniques of non-commutative algebraic geometry, we prove that Serre's ordinary density conjecture is true for cubic $4$-folds which contain a $\mathbb{P}^2$.
\end{abstract}

Keywords: motives, derived category, non-commutative algebraic geometry, number theory

2020 Mathematics Subject Classification: Primary 14C15,  14F08 Secondly 14J33, 14F20

\section{Introduction}
Given an invariant $\sF$ of varieties, particularly when $\sF$ comes from a cohomology theory: When is $\sF$ preserved by equivalences of derived categories of coherent sheaves? Even for the Hodge numbers $h^{i,j}$, for example, this question is a complex problem. If $\text{char}(k)=0$, then Hodge numbers are derived invariants when the dimension of varieties is less than $3$ \cite[Corollary C]{derivedHOdge3fold}, and Kontsevich predicted that this continues in higher dimensions. On the other hand, if $\text{char}(k)=p>0$, Antieau and Bragg showed that Hodge numbers are derived invariants for smooth proper surfaces \cite[Thm. 1.3(1)]{AB19}, but Addington and Bragg showed that this does not continues for smooth proper $3$-folds \cite[Thm. 1.1]{Addington17}. For the Abelianization of the Fundamental Group $\pi^{\text{ab}}_1$, Schnell described an example of smooth complex projective $3$-folds $X$ and $Y$, such that $X$ and $Y$ are derived equivalent but $\pi^{\text{ab}}_1$ is not preserved by equivalences of derived categories \cite{fundamentalisnotderived}, this implies that the torsion in the singular cohomology group 
$H^2_{\text{Sing}}(-, \Z)$
with $\Z$-coefficients is not derived invariant. On the other hand, the even and odd degree singular cohomology groups with $\Q$-coefficients are derived invariants. Our goal is to understand the problem by using the motive theory and Grothendieck-Riemann-Roch theory \cite{BorelSerre} and to construct some \textit{integral derived invariants} by using \textit{integral Grothendieck-Riemann-Roch theory} \cite{Papas07}, and \cite{M1}, and we give some applications in some areas.

\subsection{Manin invariants}\label{1.1} Let $k$ be a field and $R$ a commutative ring. We call a graded invariant $\sF=\bigoplus \sF^{j}$ of smooth projective varieties over $k$ with $R$-coefficients which is functorial in Chow groups (details are given in sec. 2) a “\textit{Manin invariant with $R$-coefficients}”. We list some Manin invariants with $R$-coefficients
\begin{itemize}
    \item If $k=\mathbb{C}$, the even and odd degree singular cohomology groups {$H^{\text{even}}_{\text{Sing}}(X, R)$, $H^{\text{odd}}_{\text{Sing}}(X, R)$}.
    \item If $(l,\text{char}(k))=1$ for a prime $l$ and $R=\Z_l$, the Galois representation of the even and odd degree $l$-adic cohomology group %
    \textbf{
    $\bigoplus_{i}H^{2i}_{\acute{e}t}(X_{\ol{k}},\Z_l(i))$,  
    $\bigoplus_{i}H^{2i-1}_{\acute{e}t}(X_{\ol{k}},\Z_l(i)).$
    }%
    \item If $k=R$ admits the resolution of singularities, the collection of Hodge cohomology groups \textbf{$\bigoplus_{j-i=r}H^i_{\Zar}(X,\Omega^j)$} for any $r$.
    \item If $\text{char}(k)=p>0$ and let $R$ be the Witt ring $W$ of $k$, the collection of Hodge-Witt cohomology groups \textbf{$\bigoplus_{j-i=r}H^i(X,W\Omega^j)$} for any $r$.
\end{itemize}

By the same discussion as in \cite{Orlov04}, we prove the following by using the Grothendieck Riemann-Roch theory.
\begin{thm}[{see Sec.\ref{maninInvDerInv}}]
For a $\Q$-algebra $R$, any Manin invariant with $R$-coefficients $\sF=\sF^{\bullet}$ is a derived invariant for smooth projective varieties.
\end{thm}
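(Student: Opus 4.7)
The plan is to mimic Orlov's original argument \cite{Orlov04} and reduce a derived equivalence to an isomorphism of Chow motives with $R$-coefficients via a Fourier--Mukai kernel and the Grothendieck--Riemann--Roch theorem, then invoke the defining functoriality of the Manin invariant $\sF$. Throughout, the hypothesis $\Q \subseteq R$ is used precisely to guarantee that the Chern character and the square root of the Todd class are defined in Chow groups with $R$-coefficients.

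Concretely, let $X$ and $Y$ be smooth projective $k$-varieties with a $k$-linear equivalence $\Phi\colon D^b(X) \xrightarrow{\sim} D^b(Y)$. By Orlov's representability theorem, there exists a kernel $\sP \in D^b(X \times Y)$ with $\Phi \simeq \Phi_{\sP}$, and similarly a kernel $\sQ \in D^b(Y \times X)$ representing the quasi-inverse. The first substantive step is to form the Mukai-type correspondence
\[
v(\sP) \;:=\; \ch(\sP)\cdot p_X^{*}\sqrt{\operatorname{td}(X)}\cdot p_Y^{*}\sqrt{\operatorname{td}(Y)} \;\in\; \CH^{*}(X\times Y)_R,
\]
and analogously $v(\sQ)\in \CH^{*}(Y\times X)_R$. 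These are well defined since $\ch$ and $\sqrt{\operatorname{td}}$ exist in $\CH^{*}(-)_R$ under our hypothesis on $R$.

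The key technical claim is the composition compatibility
\[
v(\sP_2 \star \sP_1) \;=\; v(\sP_2)\circ v(\sP_1),
\]
where $\star$ denotes convolution of Fourier--Mukai kernels and $\circ$ denotes composition of correspondences. This is exactly what Grothendieck--Riemann--Roch gives when applied to the middle projection of the triple product: the Todd factor produced by GRR from the middle pushforward cancels the two copies of $\sqrt{\operatorname{td}}$ sitting on the middle variety, by multiplicativity of $\sqrt{\operatorname{td}}$. Applied to the pair $(\sP,\sQ)$, this yields $v(\sP)\circ v(\sQ)=[\Delta_Y]$ and $v(\sQ)\circ v(\sP)=[\Delta_X]$, so $v(\sP)$ and $v(\sQ)$ are mutually inverse isomorphisms in the category of Chow motives with $R$-coefficients.

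Finally, by the definition of a Manin invariant with $R$-coefficients recalled in Section~2, $\sF^{\bullet}$ is covariantly functorial for correspondences in $\CH^{*}(-\times -)_R$, so the isomorphism above transfers to an isomorphism $\sF^{\bullet}(X)\simeq \sF^{\bullet}(Y)$. I expect the main obstacle to be the composition identity for $v$: although formally a consequence of GRR, one must track signs, orientations, and the fact that $v(\sP)$ is inhomogeneous in codimension. That inhomogeneity is also the reason the grading of $\sF^{\bullet}$ must be defined on a coarser indexing set (even/odd parity, or the diagonals $j-i=r$ illustrated in Section~\ref{1.1}) rather than by codimension piece-by-piece; verifying that the examples of Section~\ref{1.1} genuinely fit into this framework should be essentially formal from the definition together with GRR.
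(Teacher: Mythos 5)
Your proposal is correct and is essentially the paper's argument: the paper packages the same Mukai-vector/GRR construction as the functor $\omega_{k,R}:\KM(k)_R\to C_{k,R}$ on Gillet--Soul\'e $K$-motives (citing Tabuada for the functoriality, i.e.\ your composition identity for $v$, and \cite[Lemma 3.2]{M1} for the fact that a Fourier--Mukai kernel of an equivalence gives an isomorphism of $K$-motives), and then composes with the functor $\sF:C_{k,R}\to\mathcal{A}$ attached to the Manin invariant exactly as you do. Your explicit unwinding of $v(\sP)$, the cancellation of the middle Todd factors, and the remark that the inhomogeneity of $v(\sP)$ is why condition (c-2) must allow full $\CH^{*}$-correspondences all match the paper's intent.
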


A conjecture of Orlov \cite[Conj. 1]{Orlov04} implies the following conjecture.

\begin{conj}\label{dream}
For a $\Q$-algebra $R$, each graded piece $\sF^{i}$ of a Manin invariant $\sF^{\bullet}$ with $R$-coefficients is a derived invariant for smooth projective varieties.
\end{conj}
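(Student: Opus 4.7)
The plan is to deduce this conjecture directly from Orlov's conjecture. Recall that \cite[Conj. 1]{Orlov04} predicts that a Fourier--Mukai equivalence $D^b(X) \simeq D^b(Y)$ between smooth projective varieties is induced by a correspondence which in turn produces an isomorphism of rational Chow motives $M(X)_\Q \iso M(Y)_\Q$. First I would unravel this to a pair of degree-zero correspondences $\alpha \in \CH^{\dim X}(X \times Y)_\Q$ and $\beta \in \CH^{\dim Y}(Y \times X)_\Q$ whose pushforwards give mutually inverse isomorphisms $\CH^i(X)_\Q \iso \CH^i(Y)_\Q$ separately for each codimension $i$, since a degree-zero correspondence acts on Chow groups preserving codimension.

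Next I would plug this into the definition of a Manin invariant. Since $\sF^\bullet$ is by hypothesis functorial with respect to Chow correspondences, each graded piece $\sF^i$ is acted on by $\alpha$ and $\beta$; and because $R$ is a $\Q$-algebra we may tensor the above isomorphisms with $R$ without losing invertibility. The resulting maps $\sF^i(X) \iso \sF^i(Y)$ are then mutually inverse for every $i$, which is exactly the conclusion of Conjecture~\ref{dream}.

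It is illuminating to contrast this plan with the proof of the unconditional theorem just above. That argument, modelled on \cite{Orlov04}, proceeds through $K$-theory: a derived equivalence yields an isomorphism $K_0(X)_\Q \iso K_0(Y)_\Q$, and the rational Chern character combined with Grothendieck--Riemann--Roch transports this to an isomorphism of total Chow groups $\CH^\bullet(X)_\Q \iso \CH^\bullet(Y)_\Q$. However, the Todd-class corrections mix different codimensions, so the resulting isomorphism only respects the total grading. Orlov's conjecture is precisely the grading-preserving refinement needed to upgrade the conclusion from $\sF^\bullet$ to each $\sF^i$ separately. The hard part, therefore, is not internal to this proposal but rather Orlov's conjecture itself, which remains widely open and is the reason why the statement appears here as a conjecture rather than a theorem.
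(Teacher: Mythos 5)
Your proposal is correct and matches the paper's intent exactly: the statement is a conjecture for which the paper offers no proof beyond the remark that Orlov's \cite[Conj.~1]{Orlov04} implies it, and your argument is precisely the fleshing-out of that implication --- a conjectural isomorphism of rational Chow motives is a degree-zero correspondence, so by condition (c-1) it acts on each graded piece $\sF^i$ separately, with (c-2) guaranteeing the two directions are mutually inverse. The only residual gap is Orlov's conjecture itself, as you correctly identify (and note that your $\alpha\in\CH^{\dim X}(X\times Y)_\Q$ versus the paper's convention $\CH^{\dim Y}$ is harmless, since derived equivalent smooth projective varieties have equal dimension).
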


This Theorem/Conjecture could explain why Hodge numbers $h^{i,j}$ of characteristic $0$ should be derived invariants and the singular cohomology group with $\Q$-coefficients should be derived invariants. However, Hodge numbers $h^{i,j}$ of characteristic $p>0$ are not derived invariants, and the torsion subgroup of singular cohomology group with $\Z$-coefficients are not derived invariants. Since the former are graded pieces of Manin invariants with $\Q$-coefficients, the above Theorem/Conjecture would imply that they are derived invariants. On the other hand, since the latter do not have $\Q$-coefficients; we cannot apply the Theorem/Conjecture. We study an integral analogue of this story by using the integral Grothendieck Riemann-Roch theorem \cite{Papas07}, and \cite{M1}. We show that Pappas's result \cite{Papas07} would imply the following.

\begin{thm}[Theorem \ref{ch0Manin}]\label{thm1.3}
If $\text{char}(k)=0$, for a $\Z[\frac{1}{(3d+1)!}]$-algebra $R$, any $d$-Manin invariant with $R$-coefficients $\sF$ is a derived invariant for smooth projective varieties whose dimension are less than $d$.
\end{thm}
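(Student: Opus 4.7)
The plan is to follow the strategy of Orlov by replacing the rational Mukai vector with its integral refinement coming from Pappas's theorem. Given a derived equivalence $\Phi : D^b(X) \to D^b(Y)$ between smooth projective $k$-varieties of dimension less than $d$, Orlov's representability theorem provides a Fourier--Mukai kernel $\sE \in D^b(X \times Y)$ representing $\Phi$. Classically one associates to $\sE$ the Mukai vector $v(\sE) = \ch(\sE) \cdot \sqrt{\text{td}(X \times Y)}$ and shows that convolution with $v(\sE)$ induces an isomorphism $\CH^{\bullet}(X)_{\Q} \iso \CH^{\bullet}(Y)_{\Q}$ of total Chow groups; composing with the functorial structure then gives the derived invariance of any Manin invariant with $\Q$-coefficients (this is the content of the theorem in Section \ref{maninInvDerInv}).

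First I would recall the precise definition of a $d$-Manin invariant from Section 2 and unfold what functoriality is assumed. Next, in place of the rational Mukai vector I would use the integral cycle class $c(\sE) \in \CH^{\bullet}(X \times Y)_{R}$ whose existence is guaranteed by Pappas's integral Grothendieck--Riemann--Roch theorem \cite{Papas07} after inverting the appropriate primes, together with its refinement in \cite{M1}. Since $X \times Y$ has dimension at most $2d-2$, the integral Todd class and square root Todd class are defined over $\Z[1/(2d)!]$, hence certainly over $R = \Z[1/(3d+1)!]$. I would then define $[\sE]_R : \CH^{\bullet}(X)_R \to \CH^{\bullet}(Y)_R$ by $\alpha \mapsto p_{Y*}(p_X^{*}(\alpha) \cdot c(\sE))$ and pass to any Manin invariant $\sF$ via the built-in functoriality in Chow groups.

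Next I would verify that $[\sE]_R$ and $[\sE^{\vee}]_R$ are mutually inverse in $R$-coefficients. The rational version is Orlov's identity for the composition of Fourier--Mukai kernels, which is deduced from Grothendieck--Riemann--Roch applied to the projections $X \times Y \times X \to X \times X$ and $Y \times X \times Y \to Y \times Y$. The integral upgrade requires exactly that Pappas's GRR holds in the range that these compositions live in; since $X \times Y \times X$ and $Y \times X \times Y$ have dimension strictly less than $3d$, Pappas's theorem applies with coefficients in $\Z[1/(3d+1)!]$, making the composition identity hold integrally over $R$. Combined with step one, this shows that $\sF(X) \cong \sF(Y)$ for any $d$-Manin invariant $\sF$ with $R$-coefficients.

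The main obstacle is bookkeeping the denominators through the composition of correspondences: a single Mukai-type correspondence on $X \times Y$ only requires inverting primes up to $\sim (2d)!$, but verifying that $[\sE]_R \circ [\sE^{\vee}]_R = \mathrm{id}$ entails applying integral GRR on the triple product, where the Todd class involves primes up to $3d$. This is exactly why the bound $(3d+1)!$ appears in the statement, and it is precisely the content of the integral GRR of \cite{Papas07} and \cite{M1} on smooth varieties of dimension up to $3d$ that makes the argument work; the remainder of the proof is then the mechanical translation of Orlov's rational computation into the integral setting provided by this result.
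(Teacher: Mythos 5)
Your proposal is correct and takes essentially the same approach as the paper: the paper packages the integral Mukai-vector construction and its compatibility with convolution of kernels into the functor $\omega_{k,R}:\KM^{\leq d}(k)_R \to C_{k,R}^{\leq d}$ imported from \cite{M1} (built on Pappas's integral GRR), and then observes that Orlov's kernel gives an isomorphism $X_R \simeq Y_R$ in $\KM^{\leq d}(k)_R$, which the composite $\sF\circ\omega_{k,R}$ transports to $\mathcal{A}$. What you spell out by hand --- the integral cycle class on $X\times Y$ and the composition identity via integral GRR on triple products of dimension at most $3d$, which is exactly where the bound $(3d+1)!$ comes from --- is precisely the content of that cited functor, so the two arguments coincide up to how much of \cite{M1} is treated as a black box.
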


We show that our result from \cite{M1} implies the following.

\begin{thm}[Theorem \ref{chpManin}]
If $\text{char}(k)=p>0$, for a $\Z[\frac{1}{(2d+e+1)!}]$-algebra $R$, any $d$-Manin invariant with $R$-coefficients $\sF$ are derived invariants for smooth projective varieties whose dimension are less than $d$ and which can be embedded into $\PP^e_k$. 
\end{thm}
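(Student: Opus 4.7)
The plan is to adapt the proof of Theorem \ref{thm1.3}, replacing Pappas's integral Grothendieck--Riemann--Roch theorem by its positive characteristic analogue from \cite{M1}. Given a derived equivalence $\Phi : D^b(X) \iso D^b(Y)$ between smooth projective $k$-varieties $X,Y$ of dimension $<d$, each embedded in $\PP^e_k$, Orlov's theorem realises $\Phi$ as a Fourier--Mukai transform with some kernel $E \in D^b(X \times_k Y)$. One then considers the Mukai correspondence
\[
\mu(E) \;=\; \operatorname{ch}(E)\cdot\sqrt{\operatorname{td}(X\times Y)} \;\in\; \CH^*(X\times Y)_\Q .
\]
Over $\Q$-coefficients, classical GRR shows $\mu(E^\vee[\dim Y])\circ \mu(E)=\Delta_X$ and symmetrically, so $\mu(E)$ defines an isomorphism between $X$ and $Y$ in the category of Chow correspondences with $\Q$-coefficients. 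Since a Manin invariant $\sF$ with $R$-coefficients is by definition a functor from Chow correspondences with $R$-coefficients to graded $R$-modules, the desired invariance $\sF(X)\cong \sF(Y)$ will follow once the same identity is established in $\CH^*(-)_R$.

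The substance of the theorem is the denominator control, and this is precisely what \cite{M1} provides. The integral GRR in characteristic $p$ proved there states that for a smooth projective variety $Z$ of dimension $\leq D$ which embeds in $\PP^N_k$, the Chern character of any coherent sheaf on $Z$ together with $\operatorname{td}(Z)$ already lie in $\CH^*(Z)\otimes \Z[1/M!]$ for an explicit $M=M(D,N)$. Applied to $Z=X\times Y$, of dimension $<2d$ and embedded via the Segre map into projective space of dimension controlled by $e$, the integer $M$ turns out to be a divisor of $(2d+e+1)!$. In particular, after inverting $(2d+e+1)!$ the square root of $\operatorname{td}(X\times Y)$ is integral, so $\mu(E)$ is a well-defined element of $\CH^*(X\times Y)_R$ for every $\Z[1/(2d+e+1)!]$-algebra $R$.

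With $\mu(E)$ available integrally over $R$, the verification of $\mu(E^\vee[\dim Y])\circ \mu(E)=\Delta_X$ proceeds exactly as in the $\Q$-case: it reduces to the projection formula and GRR applied to the projections from the triple product $X\times Y\times X$, whose Chern characters and Todd classes once again have denominators controlled by the integral GRR of \cite{M1} with the same bound. Applying the functor $\sF$ to this isomorphism in Chow correspondences with $R$-coefficients then yields $\sF(X)\cong\sF(Y)$, proving the theorem.

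The main obstacle is the careful bookkeeping of denominators. One must check that the integer $M(D,N)$ from \cite{M1}, applied to the auxiliary products $X\times Y$ and $X\times Y\times X$ with their induced projective embeddings, remains a divisor of $(2d+e+1)!$, and that the square root of $\operatorname{td}$ (expanded as a formal power series in its Chern roots) introduces no worse denominator. If \cite{M1} is formulated only for a single variety, one resolves this by base-changing to the relevant products and enlarging the embedding via Segre; the exponents in $(2d+e+1)!$ are arranged precisely to absorb this. Once these numerical verifications are completed, the rest is a formal adaptation of Orlov's strategy carried out over $R$ instead of $\Q$.
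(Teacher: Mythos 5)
Your proposal is correct and follows essentially the same route as the paper: the paper's proof consists precisely of composing the functor $\omega_{k,R}:\KM^{\leq d}_{(e)}(k)_R \to C^{\leq d}_{k,R}$ obtained from the integral Grothendieck--Riemann--Roch comparison of \cite[Theorem 3.1]{M1} with the functor $\sF:C^{\leq d}_{k,R}\to\mathcal{A}$ attached to the $d$-Manin invariant, and then applying it to the isomorphism $X_R\simeq Y_R$ in $\KM^{\leq d}_{(e)}(k)_R$ induced by the Fourier--Mukai kernel. You simply unfold that functor explicitly via the Mukai vector and its denominator bookkeeping, which is exactly what \cite{M1} packages; the paper treats it as a black box.
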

\begin{RmConj}\label{reco}
As with the Conjecture~\ref{dream}, we also predict that if $\text{char}(k)=0$ (resp. $\text{char}(k)=p>0$), for a $\Z[\frac{1}{(3d+1)!}]$-algebra (resp. $\Z[\frac{1}{(2d+e+1)!}]$-algebra) $R$, any graded piece of a Manin invariant with $R$-coefficients $\sF^i$ is a derived invariant for smooth projective varieties whose dimension are less than $d$ (resp. and which can be embedded into $\PP^e_k$).
\end{RmConj}

These theorems imply that the following Manin invariants are preserved by equivalence of derived categories of smooth projective varieties over $k$ whose dimension are less than $d$ and which can be embedded into $\PP^e_k$.
\begin{itemize}
    \item If $k=\mathbb{C}$, the the even and odd degree singular cohomology groups {$H^{\text{even}}_{\text{Sing}}(-, R)$, $H^{\text{odd}}_{\text{Sing}}(-, R)$}
    \item If $\text{char(k)}=0$, for a prime $l>3d +1$, the Galois representations of the even and odd degree $l$-adic cohomology groups
    {$\bigoplus_{i}H^{2i}_{\acute{e}t}(-_{\ol{k}},\Z_l)(i)$, $\bigoplus_{i}H^{2i-1}_{\acute{e}t}(-_{\ol{k}},\Z_l)(i)$}.

    \item If $\text{char(k)}=p$, for a prime $l>2d+e +1$ so that $(p,l)=1$, the Galois representation of the even and odd degree $l$-adic cohomology group {$\bigoplus_{i}H^{2i}_{\acute{e}t}(-_{\ol{k}},\Z_l)(i)$, $\bigoplus_{i}H^{2i-1}_{\acute{e}t}(-_{\ol{k}},\Z_l)(i)$}.
    \item If $\text{char}(k)=p>0$ and $p>2d +e+1$, under the assumption of resolution of singularities, the collection of Hodge cohomology groups {$\bigoplus_{j-i=r}H^i_{\Zar}(X,\Omega^j)$} for any $r$. (Remark. This has been already known for $p\geq \dim X$ \cite{HKRp}).
\end{itemize}

There are certain applications of these results to 
mirror symmetry, algebraic geometry of positive characteristic and number theory. We shall explain these applications.
%

\subsection{\textbf{The application to mirror symmetry}} Batyrev and Kreuzer conjectured that the torsion subgroup in Betti cohomology $\text{Tors}\bigl(H^2_{\text{sing}}(-,\Z)\bigr)$ and $\text{Tors}\bigl(H^3_{\text{sing}}(-,\Z)\bigr)$ are exchanged by homological mirror symmetry \cite{integralmirror}. In \cite{GrossPopescu}, Gross and Popescu gave a counterexample to this conjecture (see \cite{fundamentalisnotderived}). Gross and Popescu constructed a pair of smooth projective varieties $X$ and $Y$ such that $Y$ is to be a (homological) mirror manifold of a (homological) mirror manifold of $X$ \cite{GrossPopescu}, Schnell showed that $X$ and $Y$ are derived equivalent \cite{fundamentalisnotderived}. If the conjecture of Batyrev and Kreuzer holds, then $X$ and $Y$ have the same torsion subgroup of the singular cohomology group $\text{Tor}H^2_{\text{Sing}}(X,\Z)\simeq \text{Tor}H^2_{\text{Sing}}(Y,\Z)$, but this isomorphism does not hold \cite{GrossPopescu}. The problem with the conjecture of Batyrev and Kreuzer is that the torsion subgroup of the singular cohomology group is not a derived invariant. Inspired by Gross and Popescu's counter-example and Theorem~\ref{thm:torsIso} below, we propose the following modified version of the conjecture of Batyrev and Kreuzer.

\begin{conj}
Let $X$ be a complex projective Calabi-Yau $3$-fold, and $M$ be a homological mirror manifold of $X$. Then the $m$-torsion subgroup in Betti cohomology $\bigl(H^2_{\text{sing}}(-,\Z)\bigr)[m]$ and $\bigl(H^3_{\text{sing}}(-,\Z)\bigr)[m]$ are exchanged by $X$ and $M$ for any number $m$ such that $(m,p)=1$ for $p=2,3,5,7$.   
\end{conj}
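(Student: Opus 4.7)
The strategy I would follow is to reduce the statement to an application of Theorem~\ref{thm1.3} after installing the mirror-symmetry exchange as a motivic/Chow-functorial correspondence. For a Calabi--Yau $3$-fold $X$ with homological mirror $M$, one expects---via Strominger--Yau--Zaslow, or via Batyrev--Borisov in the toric setting---a Fourier--Mukai type kernel on an auxiliary total space that realises mirror symmetry as a geometric integral transform interchanging the total even and total odd singular cohomology of the pair $(X,M)$. The first step is to present this mirror exchange as a morphism in the framework of Manin invariants; once that is available, transport along the correspondence does the main work.

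Second, apply Theorem~\ref{thm1.3} with $d = 3$. One has $3d+1 = 10$, and the primes dividing $10!$ are precisely $\{2,3,5,7\}$. Setting $R = \Z[1/10!]$, the Manin invariants $H^{\text{even}}_{\text{Sing}}(-,R)$ and $H^{\text{odd}}_{\text{Sing}}(-,R)$ are preserved. For any integer $m$ with $(m,p)=1$ for $p\in\{2,3,5,7\}$, the canonical map $H^{\bullet}_{\text{Sing}}(-,\Z)[m] \to H^{\bullet}_{\text{Sing}}(-,R)[m]$ is an isomorphism, so the mirror exchange descends to the $m$-torsion, giving
\[
H^{\text{even}}_{\text{Sing}}(X,\Z)[m] \;\cong\; H^{\text{odd}}_{\text{Sing}}(M,\Z)[m], \qquad H^{\text{odd}}_{\text{Sing}}(X,\Z)[m] \;\cong\; H^{\text{even}}_{\text{Sing}}(M,\Z)[m].
\]

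To pass from total even/odd to the $(H^2,H^3)$ pair stated in the conjecture, use that $X$ and $M$ are Calabi--Yau $3$-folds (take them simply connected for simplicity): then $H^1(-,\Z)=H^5(-,\Z)=0$, so the odd torsion is concentrated in $H^3$, while the even torsion lives in $H^2\oplus H^4$ with $H^4_{\text{tors}}\cong H^2_{\text{tors}}$ by Poincar\'e duality and the universal coefficient theorem. Combined with the graded-piece refinement predicted in Remark/Conjecture~\ref{reco}, one extracts the conjectured isomorphisms on $H^2$ and $H^3$ separately.

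The main obstacle is the first step: extracting from homological mirror symmetry a genuine motivic correspondence that acts on Chow-functorial invariants. Homological mirror symmetry natively compares $D^b(X)$ with a Fukaya category of $M$ rather than with $D^b(M)$, and bridging those two categories at the level required by Theorem~\ref{thm1.3} is a deep, largely open problem; this is precisely why the statement is posed as a conjecture. A secondary difficulty is that Theorem~\ref{thm1.3} only controls the total parity-graded cohomology, so isolating the individual degrees $H^2$ and $H^3$ requires the conjectural graded-piece refinement of Remark/Conjecture~\ref{reco} (or an \emph{ad hoc} argument specific to Calabi--Yau $3$-folds using Poincar\'e duality).
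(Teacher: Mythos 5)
The statement you were asked about is posed in the paper as a \emph{conjecture}, and the paper contains no proof of it: the author only motivates it by the Gross--Popescu counterexample to Batyrev--Kreuzer and by the surface case (Theorem~\ref{thm:torsIso}/Theorem~\ref{GOHANCHAN}), and the paper's own reasoning runs in the opposite logical direction from yours --- it argues that \emph{if} the conjecture held, then $H^2(-,\Z)[m]$ and $H^3(-,\Z)[m]$ would have to be derived invariants, which is then corroborated by Remark/Conjecture~\ref{reco} and proved unconditionally for surfaces. Your proposal is therefore not comparable to a proof in the paper; it is a strategy sketch for an open statement, and to your credit you identify precisely the two gaps that make it open: (i) homological mirror symmetry relates $D^b(X)$ to a Fukaya category of $M$, not to $D^b(M)$, so there is no Fourier--Mukai kernel or Chow correspondence to feed into Theorem~\ref{thm1.3}, and (ii) Theorem~\ref{thm1.3} only controls the total even/odd Manin invariant, so splitting off $H^2$ and $H^3$ individually requires the (itself conjectural) graded refinement of Remark/Conjecture~\ref{reco}. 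Your numerology is consistent with the paper: for $d=3$ one has $3d+1=10$ and the primes dividing $10!$ are exactly $\{2,3,5,7\}$, matching the hypothesis $(m,p)=1$ for $p=2,3,5,7$.

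Two cautions on the degree-isolation step. First, your reduction assumes $X$ and $M$ simply connected, but the conjecture must apply to non-simply-connected Calabi--Yau $3$-folds --- the motivating Gross--Popescu pair consists of a simply connected $Y$ and the quotient $Y/H$ with $\pi_1(Y/H)=(\Z/8)^2$, and it is exactly the torsion coming from $\pi_1$ that the conjecture is designed to control; so the simply connected case misses the interesting examples. Second, even granting Poincar\'e duality and universal coefficients, the passage from an abstract isomorphism of total even (resp.\ odd) $m$-torsion to an exchange of the individual groups $H^2[m]$ and $H^3[m]$ needs the correspondence to respect the grading, which is again Remark/Conjecture~\ref{reco} and not Theorem~\ref{thm1.3}. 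In short: your outline faithfully reflects why the author states this as a conjecture rather than a theorem, but it is not a proof, and the paper does not claim one either.
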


For a derived equivalence $D^b(X)\simeq D^b(Y)$ of smooth projective varieties, $Y$ is a homological mirror of a mirror manifold of $X$. Thus if the conjecture holds, then $H^2(-,\Z)[m]$ and $H^3(-,\Z)[m]$ should be derived invariants, and Remark/Conjecture~\ref{reco} would imply that these two invariants are derived invariants. We can prove that these two invariants are derived invariants for smooth projective surfaces.

\begin{thm}[Theorem~\ref{GOHANCHAN}] \label{thm:torsIso}
Let $X$ and $Y$ be smooth projective surfaces over $\mathbb{C}$. We assume that there is a $\mathbb{C}$-linear equivalence $F:D^b(X) \simeq D^b(Y)$. For a natural number $m$ such that $(m,p)=1$ for $p=2,3,5,7$ there are isomorphisms of the $m$-torsion parts of cohomology groups
\[
H^i(X,\Z)[m] \simeq H^i(Y,\Z)[m]
\]
for any $i$. In particular, there is an isomorphism of the $m$-torsion parts of the abelianization of the fundamental groups
\[
\pi_1^{\text{ab}}(X)[m]\simeq \pi_1^{\text{ab}}(Y)[m].
\]
\end{thm}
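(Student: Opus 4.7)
The plan is to apply Theorem~\ref{thm1.3} with $d=2$ and $R=\Z[1/7!]$; since $7!=2^{4}\cdot 3^{2}\cdot 5\cdot 7$, the primes inverted in $R$ are exactly $\{2,3,5,7\}$, matching the hypothesis on $m$. The even and odd singular cohomology $H^{\text{even}}(-,R)=\bigoplus_{i}H^{2i}(-,R)$ and $H^{\text{odd}}(-,R)=\bigoplus_{i}H^{2i+1}(-,R)$ are Manin invariants with $R$-coefficients by the first bullet of \S\ref{1.1}, so Theorem~\ref{thm1.3} applied to the assumed equivalence $F\colon D^{b}(X)\simeq D^{b}(Y)$ yields $R$-module isomorphisms
$$H^{\text{even}}(X,R)\cong H^{\text{even}}(Y,R),\qquad H^{\text{odd}}(X,R)\cong H^{\text{odd}}(Y,R).$$

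Next I would identify the torsion using the topology of a smooth projective complex surface $S$. The groups $H^{0}(S,\Z)$, $H^{4}(S,\Z)$ and $H^{1}(S,\Z)$ are torsion-free (the last by Poincar\'e duality $H^{1}(S,\Z)\cong H_{3}(S,\Z)$ combined with the universal coefficient theorem). The remaining torsion sits in degrees $2$ and $3$, where Poincar\'e duality and UCT give
$$H^{2}(S,\Z)_{\text{tors}}\cong H^{3}(S,\Z)_{\text{tors}}\cong H_{1}(S,\Z)_{\text{tors}}\cong \bigl(\pi_{1}^{\text{ab}}(S)\bigr)_{\text{tors}}.$$
Since $R$ is $\Z$-flat, $H^{i}(S,R)=H^{i}(S,\Z)\otimes_{\Z}R$; and for any $m$ coprime to $7!$, the $m$-torsion of any finite abelian group lies in primes outside $\{2,3,5,7\}$ and is therefore unaffected by $-\otimes_{\Z} R$. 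Consequently, under the hypothesis $(m,7!)=1$,
$$H^{\text{even}}(X,R)[m]=H^{2}(X,R)[m]=\pi_{1}^{\text{ab}}(X)[m],\quad H^{\text{odd}}(X,R)[m]=H^{3}(X,R)[m]=\pi_{1}^{\text{ab}}(X)[m],$$
and likewise for $Y$.

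Taking $m$-torsion on both sides of the two $R$-module isomorphisms above then yields $\pi_{1}^{\text{ab}}(X)[m]\cong\pi_{1}^{\text{ab}}(Y)[m]$, and transferring this back through the canonical identifications $H^{i}(-,\Z)_{\text{tors}}\cong (\pi_{1}^{\text{ab}})_{\text{tors}}$ for $i=2,3$ gives $H^{i}(X,\Z)[m]\cong H^{i}(Y,\Z)[m]$. For $i=0,1,4$ both sides vanish since the integral cohomology is torsion-free there, so the desired isomorphism is automatic in those degrees. The principal technical point is that the identifications furnished by Theorem~\ref{thm1.3} must be genuinely $R$-linear, so that passage to $m$-torsion commutes with the isomorphism; this is what justifies the whole argument, and it follows from the fact that the isomorphism is induced by applying the Chern character to the Fourier-Mukai kernel, which acts $R$-linearly on integral Chow groups via the integral Grothendieck-Riemann-Roch theorem of \cite{Papas07}.
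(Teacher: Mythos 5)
Your proposal is correct and follows essentially the same route as the paper: the paper's proof invokes Theorem~\ref{GOHANCHAN} (the specialization of the integral Manin-invariant machinery to even/odd singular cohomology with $\Z[\tfrac{1}{7!}]$-coefficients, $d=2$) to get $R$-linear isomorphisms $H^{\pm}_{\text{sing}}(X,R)\simeq H^{\pm}_{\text{sing}}(Y,R)$, and then, exactly as you do, isolates the torsion in degrees $2$ and $3$ using torsion-freeness in degrees $0,1,4$ and the universal coefficient theorem for the $\pi_1^{\text{ab}}$ statement. Your degree-by-degree identification of the torsion via Poincar\'e duality is just a more explicit version of the step the paper leaves terse.
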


\subsection{The application to algebraic geometry of positive characteristic} For a perfect field $k$, we write $W$ for the Witt ring of $k$ and $K$ for the fraction field of $W$. Antieau-Bragg showed that the collection of Hodge-Witt cohomology groups tensored by $K$ over $W$ of smooth projective varieties:
\[
\bigoplus_{j-i=r}H^i(X,W\Omega^j_X)\otimes_W K
\]
is preserved by equivalences of derived categories for any $r\in\Z$ \cite[Theorem 5.10]{AB19}. Also, they showed that Hodge-Witt reduction of smooth projective varieties whose dimensions are less than $3$ is preserved by equivalences of derived categories \cite[Corollary 5.41]{AB19}. We can prove this continues for higher dimensional and ordinary reduction cases when $p$ is big enough. 

\begin{thm}(Threorem~\ref{4.3})
Let $X$ and $Y$ be smooth projective varieties over a perfect field $k$ with characteristic $p$. Suppose $X,Y\in \Sm\Proj^{\leq d}_{(e)}(k)$ for some $e$ and $d$, and $p>2d+e+1$. We assume that there is a $k$-linear fully faithful triangulated functor $F:D^b(X) \to D^b(Y)$. Then the followings holds.
\begin{enumerate}
    \item For any integer $r$, there is a split injective morphism of $W$-modules 
    \[
    \bigoplus_{j-i=r}H^i(X,W\Omega^j_X) \hookrightarrow \bigoplus_{j-i=r}H^i(Y,W\Omega^j_Y).
    \]
    \item For any integer $r$, if $F$ is an equivalence, then there is an isomorphism of $W$-modules
    \[
    \bigoplus_{j-i=r}H^i(X,W\Omega^j_X) \simeq \bigoplus_{j-i=r}H^i(Y,W\Omega^j_Y).
    \]
    \item If $Y$ is Hodge-Witt, then $X$ is also Hodge-Witt.
    \item If $Y$ is ordinary, then $X$ is also ordinary.
\end{enumerate}
\end{thm}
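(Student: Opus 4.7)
The plan is to identify the collection $\sF^r(-) := \bigoplus_{j-i=r} H^i(-, W\Omega^j)$, indexed by $r \in \bZ$, as a graded Manin invariant with coefficients in the Witt ring $W$, and then invoke Theorem~\ref{chpManin}. The running hypothesis $p > 2d+e+1$ guarantees that every prime $\leq 2d+e+1$ is a unit in $W$, so $W$ is a $\Z[\frac{1}{(2d+e+1)!}]$-algebra and the theorem applies to $\sF$ on $\Sm\Proj^{\leq d}_{(e)}(k)$.

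The first step is to equip Hodge-Witt cohomology with the Manin structure, i.e., with a compatible action of Chow correspondences with $W$-coefficients. This is provided by Gros' cycle class map $\CH^*(X) \to \bigoplus_j H^{2j-*}(X, W\Omega^j)$ together with the integral Grothendieck-Riemann-Roch theorem for Hodge-Witt cohomology established in \cite{M1}; these yield proper pushforward, flat pullback, and the projection formula in the form needed to factor $\sF^\bullet$ through the category of Chow correspondences with $W$-coefficients.

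The second step handles part (1). A $k$-linear fully faithful triangulated functor $F : D^b(X) \to D^b(Y)$ between smooth projective varieties is of Fourier-Mukai type (Orlov), with a kernel $P \in D^b(X \times Y)$; it admits both adjoints, which produce a semi-orthogonal decomposition of $D^b(Y)$ with $D^b(X)$ as a component. On passing via the Chern character to Chow motives with $W$-coefficients, the idempotent $F \circ F^{L}$ provides an explicit splitting that realizes $\sF(X)$ as a direct summand of $\sF(Y)$, giving the split injection in (1). When $F$ is an equivalence the adjoints are quasi-inverses, so the splitting becomes an isomorphism, yielding (2).

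Finally, (3) and (4) are formal consequences of (2). Being Hodge-Witt means every individual $H^i(X, W\Omega^j)$ is finitely generated over $W$; this property is detected on each graded piece $\sF^r(X)$ and is transported by the isomorphism of (2). Ordinariness, by the Bloch-Kato-Illusie criterion $H^j(X, B\Omega^i_X) = 0$, is encoded jointly in the Hodge-Witt and Hodge cohomology, both of which are Manin invariants in our range. The main obstacle I anticipate is the rigorous verification in the first step that the integral Hodge-Witt Grothendieck-Riemann-Roch of \cite{M1} genuinely factors $\sF^\bullet$ through $\Chow$ with the required denominators inverted; once this Manin structure is in place, the four conclusions follow formally from Theorem~\ref{chpManin} and the standard characterizations of Hodge-Witt and ordinary varieties.
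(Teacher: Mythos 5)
Your treatment of parts (1)--(3) follows essentially the paper's route: a split injection $X\hookrightarrow Y$ of non-commutative ($K$-theoretic) motives coming from the fully faithful Fourier--Mukai functor, transported to Chow correspondences with $\Z[\frac{1}{(2d+e+1)!}]$-coefficients by the integral Grothendieck--Riemann--Roch functor of \cite{M1}, and then fed into the action of correspondences on Hodge--Witt cohomology (the paper uses the R\"ulling--Chatzistamatiou functor $C_{k,W}\to(W\text{-mod})$, $X\mapsto\bigoplus_{j-i=r}H^i(X,W\Omega^j_X)$, where you invoke Gros' cycle class map --- same content). One small correction: (3) must be deduced from the split injection of (1), not from the isomorphism of (2), since in (3) the functor $F$ is only assumed fully faithful; the argument is that a direct summand of a finitely generated $W$-module is finitely generated, so this is harmless.

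Part (4), however, has a genuine gap. You assert that ordinariness ``is encoded jointly in the Hodge--Witt and Hodge cohomology, both of which are Manin invariants in our range,'' but the defining vanishing $H^i(X,d\Omega^j_X)=0$ is a statement about the cohomology of the sheaves $d\Omega^j$ (equivalently $B\Omega^{j+1}$), which is \emph{not} a graded piece of any of the Manin invariants you have constructed, and it is not determined by the abstract $W$-module structure of the groups $\bigoplus_{j-i=r}H^i(X,W\Omega^j_X)$: a variety can be Hodge--Witt without being ordinary, so neither (1) nor (2) transfers ordinariness directly. The missing idea is the passage to the self-product: by Ekedahl's theorem (Proposition~\ref{threeJoshi} in the paper), $X$ is ordinary if and only if $X\times X$ is Hodge--Witt. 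One therefore tensors the motivic split injection $M(X)_R(d_Y)[2d_Y]\hookrightarrow\bigoplus_i M(Y)_R(i)[2i]$ with itself to obtain a split injection of $h(X\times X)$ into a finite sum of Tate twists of $h(Y\times Y)$ in $\Chow(k)_R$, hence a split injection $\bigoplus_{j-i=r}H^i(X\times X,W\Omega^j)\hookrightarrow\bigoplus\bigoplus_{j-i=r'}H^i(Y\times Y,W\Omega^j)$. Since $Y$ ordinary implies $Y\times Y$ ordinary implies $Y\times Y$ Hodge--Witt, this forces $X\times X$ to be Hodge--Witt and hence $X$ to be ordinary. Without this reduction to the Hodge--Witt property of $X\times X$, your argument for (4) does not go through.
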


\subsection{The application to number theory} We consider the following conjecture.
\begin{conj}(Serre conjecture for ordinary density)\label{Serre}
Let $X/K$ be a  smooth projective variety over a  number field $K$. Then there is finite extension $L/K$ such that there exists a positive density of primes $v$ of $L$ for which $X_L$ has a good ordinary reduction at $v$.
\end{conj}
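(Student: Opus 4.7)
The plan is to combine the integral derived invariance of ordinary reduction established in Theorem~\ref{4.3} with Chebotarev-style equidistribution for the compatible system of $\ell$-adic Galois representations attached to $X$, thereby reducing the conjecture to a statement about the image of Galois inside the algebraic monodromy group. The key point is that ordinary reduction at $v$ — the coincidence of Newton and Hodge polygons on $H^*_{\text{crys}}(X_v/W(\kappa(v)))$ — translates, via the $p$-adic comparison isomorphism and the compatibility of the system $\{H^*_{\et}(X_{\ol{L}},\Q_\ell)\}_\ell$, into the condition that $\mathrm{Frob}_v$ lies in a distinguished Weyl-stable conjugacy class inside the algebraic monodromy group $G_\ell$, namely the one containing the Hodge cocharacter $\mu_h$.

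First I would spread $X$ out to a smooth projective model $\sX\to \Spec\sO_L[1/N]$ after a preliminary finite extension $L/K$ chosen so that (i) $\sX$ exists and (ii) for some auxiliary prime $\ell$, the algebraic monodromy group $G_\ell\subseteq \mathrm{GL}(H^*_{\et}(X_{\ol{L}},\Q_\ell))$ is connected. The ordinary locus $U_{\mathrm{ord}}\subseteq G_\ell$ is then a non-empty, conjugation-stable open subvariety whose intersection with any maximal compact torus has positive Haar measure. Next, the integral derived invariance supplied by the present paper allows one to replace $X$ by any derived-equivalent or motivically equivalent companion whose Galois representation is easier to pin down — in particular by any admissible semi-orthogonal component of $D^b(X)$ whose transcendental realization is of abelian or K3 type. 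For such companions the Mumford--Tate conjecture is available (Deligne, Tankeev, Bogomolov--Zarhin, and the recent work of Tayou on K3 surfaces), so the image of Galois inside $G_\ell$ is as large as the theory permits, and the Chebotarev density theorem applied to the compatible system yields a positive-density set of $v$ at which $\mathrm{Frob}_v$ meets $U_{\mathrm{ord}}$.

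The main obstacle is producing, for an \emph{arbitrary} smooth projective $X$, a motivic or semi-orthogonal decomposition that realizes its transcendental cohomology inside abelian or K3-type components; once such a decomposition is available, the previous two steps close by Theorem~\ref{4.3} together with standard equidistribution. For cubic $4$-folds containing a $\PP^2$, Kuznetsov's non-commutative K3 component furnishes precisely such a decomposition, which is why the strategy can be pushed through unconditionally in that case. For the general conjecture, the remaining ingredient — a categorical construction of a K3 or abelian ``motivic companion'' for each $X$ — aligns with strong forms of the motivic decomposition expectations of Bondal--Orlov and Kuznetsov, and I would attempt it by iterating the integral Grothendieck--Riemann--Roch machinery of Theorems~\ref{thm1.3} and \ref{4.3} along semi-orthogonal filtrations of $D^b(X)$ obtained from ample hypersurface sections, reducing the problem to successively smaller-dimensional strata where ordinary density is already known.
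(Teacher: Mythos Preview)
The statement you are attempting to prove is Conjecture~\ref{Serre}, which the paper states as an \emph{open conjecture}, not a theorem. There is no proof in the paper to compare against: the paper only establishes the special case of cubic $4$-folds containing a $\PP^2$ (Theorem~\ref{6.4}), precisely because in that case Kuznetsov supplies the K3-type semi-orthogonal component you allude to. So your proposal should be read as a strategy toward an open problem, not as a proof, and assessed as such.

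Viewed that way, the proposal has a genuine gap at exactly the point you yourself flag. The reduction you describe hinges on two inputs that are not available in general: (a) the existence, for an arbitrary smooth projective $X$, of a semi-orthogonal or motivic decomposition whose transcendental part is of abelian or K3 type; and (b) the Mumford--Tate conjecture (or enough of it) for those companions so that the Galois image is large enough to make the Chebotarev argument go through. Neither is known beyond very restricted classes, and (a) in particular is not expected to hold for all $X$ --- there is no reason the transcendental motive of a general variety should be built from abelian or K3 pieces. Your suggestion to iterate the integral Grothendieck--Riemann--Roch machinery along hypersurface sections does not circumvent this: Theorem~\ref{4.3} transfers ordinarity along fully faithful functors between varieties already assumed to be derived-related, it does not manufacture such relations, and slicing by ample hypersurfaces does not in general produce admissible subcategories of $D^b(X)$. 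Finally, even the translation ``ordinary reduction $\Leftrightarrow$ Frobenius in a fixed open conjugacy-stable locus of $G_\ell$'' is delicate: the condition is $p$-adic and varies with the residue characteristic, so one needs uniform control across the compatible system, which is exactly the hard content behind the known cases (elliptic curves, abelian surfaces, K3 surfaces) and is not supplied by anything in this paper.
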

The conjecture is known for elliptic curves by Serre \cite{serreelliptic}, Abelian surfaces by Ogus \cite[Chapter~6]{900}, K3 surfaces by Bogomolov and Zarhin \cite{K3ordinary}, Abelian varieties with complex multiplication and Fermat varieties \cite[Theorem 5.1.10]{Joshi}. We can give a non-commutative approach to this conjecture and prove the following.

\begin{thm}[Theorem \ref{6.4}]
Let $X$ be a cubic $4$-fold which contains $\PP^2$ over a number field $K$. Then $X$ satisfies Conjecture~\ref{Serre}. Moreover, the density of the set of finite primes at which $X_L$ has a good ordinary reduction is one.
\end{thm}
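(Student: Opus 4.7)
The plan is to combine Kuznetsov's non-commutative description of cubic $4$-folds containing a plane with the Bogomolov--Zarhin ordinary density theorem for K3 surfaces, and transfer the result back to $X$ via Theorem~\ref{4.3} applied to Kuznetsov's semi-orthogonal decomposition. Concretely, for a cubic $4$-fold $X \subset \PP^5_K$ containing a $K$-rational plane $P$, Kuznetsov constructs a K3 surface $S/K$ (the double cover of $\PP^2$ branched along the discriminant sextic of the quadric surface fibration $\mathrm{Bl}_P X \to \PP^2$) together with a Brauer class $\alpha \in \Br(S)[2]$, and a semi-orthogonal decomposition
\[
D^b(X) = \langle D^b(S,\alpha),\ \sO_X,\ \sO_X(1),\ \sO_X(2)\rangle.
\]
To avoid twisted-sheaf technicalities I pass to the Severi--Brauer conic bundle $\pi\colon B \to S$ trivialising $\alpha$; this is a smooth projective $3$-fold over $K$ with an SOD $D^b(B) \simeq \langle D^b(S),\ D^b(S,\alpha)\rangle$, and since $\pi$ is a conic bundle over $S$, $B$ has good ordinary reduction precisely where $S$ does.

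Next I apply Theorem~\ref{4.3}: setting $d = \max(\dim X,\dim B) = 4$ and $e$ so that both $X$ and $B$ embed in $\PP^e_K$, at each prime $v$ of $K$ of good reduction and residue characteristic $p > 2d+e+1$ the Hodge--Witt cohomology of $X_v$ splits additively along the SOD. The Tate factors $\sO_{X_v},\sO_{X_v}(1),\sO_{X_v}(2)$ carry only ordinary slopes, while the Kuznetsov piece $\sA_{X_v}$ contributes the same Hodge--Witt cohomology as $S_v$ (the Brauer twist, being torsion, preserves the Newton and Hodge polygons). Consequently $X_v$ is ordinary iff $S_v$ is ordinary. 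Bogomolov--Zarhin \cite{K3ordinary} then supplies, after passing to a finite extension $L/K$, a density-one set of primes of $L$ at which $S_L$ has good ordinary reduction; intersecting with the cofinite set of primes of residue characteristic exceeding $2d+e+1$ and of good reduction for $X$, we obtain a density-one set of primes of $L$ at which $X_L$ has good ordinary reduction, proving the theorem.

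The main technical obstacle is extending Theorem~\ref{4.3} from plain derived equivalences to the semi-orthogonal setting. The Severi--Brauer cover $B$ disposes of the Brauer twist, but the additivity of Hodge--Witt cohomology along an SOD such as $D^b(X) = \langle \sA_X, \sO_X, \sO_X(1), \sO_X(2)\rangle$ in positive characteristic $p > 2d+e+1$ still requires justification; this should follow by applying the integral motivic Riemann--Roch arguments underlying Theorem~\ref{4.3} to the Fourier--Mukai kernels defining the SOD components, but needs care, in particular to verify that the Tate pieces can be split off without auxiliary torsion losses below the bound. Spreading Kuznetsov's characteristic-zero construction out to positive-characteristic residue fields is routine given its explicit geometric (quadric-fibration/double-cover) nature.
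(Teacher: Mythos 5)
Your overall strategy (Kuznetsov's decomposition for a cubic $4$-fold containing a plane, plus Bogomolov--Zarhin for the associated K3 surface) matches the paper's, but the mechanism you propose for transferring ordinarity from $S$ to $X$ is genuinely different from the paper's, and it is exactly at that transfer step that your argument has a real gap. You want to work directly on the special fibers: spread out the semi-orthogonal decomposition to the residue fields $\kappa(v)$ and then invoke an SOD/twisted version of Theorem~\ref{4.3} to split the Hodge--Witt cohomology of $X_v$ into that of $S_v$ plus Tate pieces. Neither ingredient is available. Theorem~\ref{4.3} is proved only for a fully faithful functor $D^b(X)\to D^b(Y)$ between two smooth projective varieties over a perfect field of characteristic $p$; its extension to a multi-component decomposition with a twisted component $D^b(S,\alpha)$ in characteristic $p$ is precisely the additivity statement you flag as ``requiring justification'' and never supply. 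Worse, the input to any such statement would be a semi-orthogonal decomposition of $D^b(X_{\kappa(v)})$ itself, and Kuznetsov's theorem is a characteristic-zero result: specializing it to positive characteristic is not ``routine'' --- one must at minimum control the degeneration of the quadric fibration and the Brauer class modulo $v$, and re-verify the categorical arguments over $\kappa(v)$. Your reduction to the Severi--Brauer conic bundle $B$ does not remove either difficulty; it only relocates the twist.

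The paper circumvents both problems by never leaving characteristic zero on the categorical side. It descends Kuznetsov's decomposition from $\ol{K}$ to a finite extension $L$ (Theorem~\ref{5.2}), uses the integral non-commutative-to-Chow comparison together with Tabuada--Marcolli's computation of the motive of an Azumaya algebra to produce, for each prime $\mathfrak{p}$ of residue characteristic $p>13$ with $(p,2)=1$, an isomorphism of $p$-adic Galois representations $\bigoplus_{i}H^{i}_{\acute{e}t}(X_{\widehat{\ol{L}}_{\mathfrak{p}}},\Z_p)(\cdot)\simeq \bigoplus_i H^{i}_{\acute{e}t}(S_{\widehat{\ol{L}}_{\mathfrak{p}}},\Z_p)(\cdot)\oplus(\text{Tate})$, and then transfers ordinarity to the special fiber via Fontaine--Messing and Achinger's criterion (after checking freeness of the relevant crystalline and Hodge cohomologies, which is where the torsion-freeness of K3 cohomology enters). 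If you want to salvage your Hodge--Witt route, you would need to (i) prove specialization of the decomposition (or at least of the resulting isomorphism in $\KM^{\leq d}_{(e)}(\kappa(v))_R$) at almost all primes, and (ii) prove the SOD-additivity of the R\"ulling--Chatzistamatiou realization with twisted components, then conclude via Ekedahl's criterion as in Theorem~\ref{4.3}(4). As written, the proposal asserts rather than proves the key equivalence ``$X_v$ ordinary iff $S_v$ ordinary.''
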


\subsection{Notation and conventions}
We consider the following categories, and ring associated to a field $k$, natural numbers $d,e,r$, and a commutative ring $R$. Where ever possible, we have used notation already existing in the literature.
\begin{itemize}
    \item[] $\Sm\Proj^{\leq d}_{(e)}(k)$: the full subcategory of $\Sm\Proj(S)$ whose objects can be embedded in $\PP^e_k$ and dimension of objects is less than or equal to $d$
   { \item[]{$\KM^{\leq d}_{(e)}(k)$: the smallest full subcategory of $\KM(k)$ which contains the image of the functor $\Sm\Proj^{\leq d}_{(e)}(k) \to \KM(k)$ and is closed under finite coproducts}}
   {\item[]{$\Chow/-\otimes T$: is the orbit category (see~\cite[section 3]{M1}).}} 
   { \item[]{$\Chow(k) \overset{\pi}{\to} \Chow(k)/-\otimes T$: the natural functor from $\Chow(k)$ to $\Chow(k)/-\otimes T$.}}
\end{itemize}

\section{Manin invariants and motives}  We introduce the idea of 
\textit{Manin invariants} and then give a relation between Manin invariants and motive theory. We shall describe the construction of Manin invariants from a functor from the category of effective Chow motives to an additive category. This construction gives us many good examples of Manin invariants.

\subsection{} As in {\S}\ref{1.1}, we fix a base field $k$ and a commutative ring $R$. Given an $R$-linear category $\mathcal{A}$ and functors $F^i:\Sm\Proj(k) \to \mathcal{A}$ for $i\in\Z$. Orlov showed that for a $\Q$-linear functor $G$ from the orbit category $\DM(k,\Q)/T_\Q$ to $\mathcal{A}$ where $T_\Q$ is the Tate twist with $\Q$-coefficients, if smooth projective varieties $X$ and $Y$ are derived equivalent then there is an isomorphism $G(\pi(X)_\Q) \simeq G(\pi(Y)_\Q)$ in $\mathcal{A}$ where $\pi(X)$ is the orbit of $h(X)$ in the orbit category $\DM(k,\Q)/T_\Q$. Motivated by this, we define:
\begin{defn}
Given an $R$-linear category $\mathcal{A}$ which contains any infinite coproducts and convariant functors $F^i:\Sm\Proj(k) \to \mathcal{A}$ for $i\in\Z$, we call $F=\bigoplus_i F^i$ a \textit{Manin invariant} if $F=\bigoplus_i F^i$ satisfies following conditions: Let $X$, $Y$ and $Z$ be smooth projective varieties over $k$.
\begin{itemize}
    \item[(c-1)] For an algebraic cycle $\alpha \in \CH^i(X\times Y)_R$, there is a morphism $F^j(\alpha):F^j(Y) \to F^{j+i-\dim Y}(X)$ in $\mathcal{A}$ for any $j$. In particular, for a morphism of algebraic varieties $f:X \to Y$, the morphism $F^j(Y) \to F^{j}(X)$ comes form the graph morphism $[\Gamma_f(X)]\in\CH^{\dim Y}(X\times Y)$ is equal to the morphism $F^j(f):F^j(Y) \to F^j(X)$ for any $j$.  
    \item[(c-2)] For an algebraic cycle $A= \Sigma \alpha_i \in \CH^*(X\times Y)$ where $\alpha_i$ is a $i$-codimensional cycle in $X\times Y$, we let $F(A)$ denote the morphism $F(Y)=\bigoplus_j F^j(Y) \to F(X)=\bigoplus_l F^l(X)$ in $\mathcal{A}$ whose $(j,l)$-component is given by $F(\alpha_{l+\dim Y -j}): F^j(Y) \to F^l(X)$. If given an algebraic cycle $B$ in $\CH^*(Y\times Z)$ then $F=\bigoplus_i F^i$ satisfies the following equation of morphism in $\mathcal{A}$:
    \[
    F(A) \circ F(B)=F\bigr(r_*(p^*A . q^*B)\bigl)
    \]
    as morphisms from $F(Z)$ to $F(X)$ where $p$ (resp. $r$, $q$) is the projection from $X\times Y \times Z$ to $X\times Y$ (resp. to $Y\times Z$, to $X\times Z$).
\end{itemize}
    We call $F=\bigoplus_i F^i$ a \textit{$d$-Manin invariant} if $F=\bigoplus_i F^i$ satisfies conditions (c-1) and (c-2) for smooth projective varieties $X$, $Y$ and $Z$ whose dimension are less than $d$.
\end{defn}

To relate Manin invariants with motives theory, we now recall the definition of the Manin's category of motives $C_{k,R}$ from \cite{Manin} for a base field $k$ with $R$-coefficients. We let $C_{k,R}$ denote the $R$-linear category whose objects are smooth projective varieties over $k$ and for smooth projective varieties $X$ and $Y$, maps from $X$ to $Y$ are given by the Chow group $\CH^*(X\times Y)_R$ with $R$-coefficients. The composition of two morphisms is defined using intersection product. The identity $id_X$ is given by the class of the diagonal $\Delta_X\in \CH^{\dim X}(X\times X)_R$. Set the full subcategory $C_{k,R}^{\leq d}$ whose objects are varieties whose dimension are less than $d$. Obviously, given a Manin invariant with $R$-coefficients $F=\bigoplus_j F^j$, then there is the functor $\sF:C_{k,R} \to \mathcal{A}$ which sends a smooth projective variety $X$ to $F(X)=\bigoplus_j F^j(X)$ and $A\in \CH^*(X\times Y)$ to $F(A)$ and also given a $d$-Manin invariant with $R$-coefficients $F=\bigoplus_j F^j$ then there is the functor $\sF:C_{k,R}^{\leq d} \to \mathcal{A}$. Conversely, an object $G\in \DM(k,R)$ induces Manin invariants $G_{\text{even}}$ and $G_{\text{odd}}$ for any $m\in \Z$.

\begin{prop}
Given an object $G\in \DM(k,R)$. For a smooth projective variety $X$, we denote $\Hom_{\DM}\bigl(M(X)(j)[2j],G\bigr)$ and $\Hom_{\DM}\bigl(M(X)(j)[2j+1],G\bigr)$ by $G^{j}_{\text{even}}(X)$ and $G_{\text{odd}}^{j}(X)$, respectively. Then $G_{\text{even}}=\bigoplus_{j\in \Z} G^{j}_{\text{even}}$ and $G_{\text{odd}}=\bigoplus_{j\in \Z} G^{j}_{\text{odd}}$ are Manin invariants.
\end{prop}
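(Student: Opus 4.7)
The plan is to construct the action of Chow correspondences on the groups $\Hom_{\DM}(M(X)(j)[2j], G)$ via the Poincar\'e duality for smooth projective motives in $\DM(k,R)$, and then to verify conditions (c-1) and (c-2) by invoking the standard compatibility of composition in $\DM$ with the classical composition of correspondences.

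For smooth projective $Y$ of dimension $d_Y$, the motive $M(Y)$ is strongly dualizable with $M(Y)^\vee \simeq M(Y)(-d_Y)[-2d_Y]$. Combined with the K\"unneth isomorphism $M(X \times Y) \simeq M(X) \otimes M(Y)$ and the definition $\CH^i(X \times Y)_R = \Hom_{\DM(k,R)}(M(X \times Y), R(i)[2i])$, the tensor--Hom adjunction yields a canonical bijection
\[
\CH^i(X \times Y)_R \;\simeq\; \Hom_{\DM(k,R)}\bigl(M(X),\, M(Y)(i - d_Y)[2(i - d_Y)]\bigr),
\]
sending $\alpha$ to a morphism $\tilde\alpha$ of motives. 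For each $j \in \Z$, tensoring $\tilde\alpha$ with $R(j)[2j]$ yields $M(X)(j)[2j] \to M(Y)(j + i - d_Y)[2(j + i - d_Y)]$, and precomposing with this morphism produces an $R$-linear map
\[
\Hom_{\DM}\bigl(M(Y)(j + i - d_Y)[2(j + i - d_Y)], G\bigr) \longrightarrow \Hom_{\DM}\bigl(M(X)(j)[2j], G\bigr),
\]
which, after reindexing, is the required morphism between graded pieces of $G_{\text{even}}$; the analogous construction with an extra $[1]$-shift handles $G_{\text{odd}}$. The \emph{parity} point is crucial: the motivic shift $[2(i - d_Y)]$ is always even, so precomposition by $\tilde\alpha$ preserves the even/odd decomposition and makes $\bigoplus_j G^j_{\text{even}}$ and $\bigoplus_j G^j_{\text{odd}}$ into well-defined functorial targets.

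The graph-morphism case of (c-1) follows because for $f : X \to Y$, the class $[\Gamma_f] \in \CH^{d_Y}(X \times Y)$ has $i - d_Y = 0$, so the twist in the bijection above is trivial; under the identification, $[\Gamma_f]$ corresponds to $M(f) : M(X) \to M(Y)$, and precomposition with $M(f)$ on $\Hom_{\DM}(-, G)$ agrees tautologically with the pullback $F^j(f)$.

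Condition (c-2) is the substantive input. The content is that, under the bijection of the first step, the composition $\tilde B \circ \tilde A$ in $\DM(k,R)$ of the motivic avatars $\tilde A$ and (a shift of) $\tilde B$ of cycles $A \in \CH^*(X \times Y)$ and $B \in \CH^*(Y \times Z)$ corresponds precisely to the Chow-theoretic composition $r_*(p^*A \cdot q^*B) \in \CH^*(X \times Z)$. This is the classical computation of $\Hom$ in Voevodsky's category of geometric motives via correspondences (cf.\ Mazza--Voevodsky--Weibel), combined with the projection formula for pushforward along $r$. Once this is invoked, functoriality of precomposition in the second argument of $\Hom_{\DM}(-, G)$ immediately yields $F(A) \circ F(B) = F(r_*(p^*A \cdot q^*B))$. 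The main obstacle is precisely this identification of motivic composition with $r_*(p^* \cdot q^*)$: granting it, everything else is formal manipulation of the tensor/shift endofunctors on $\DM(k,R)$ and the observation that $[2(i - d_Y)]$ is an even suspension.
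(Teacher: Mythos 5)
Your proposal is correct and follows essentially the same route as the paper: both reduce to the fact that $\CH^{*}(X\times Y)_R$ computes the $\Hom$-groups between Tate twists of motives of smooth projective varieties in $\DM(k,R)$, compatibly with composition of correspondences $r_*(p^*A\cdot q^*B)$, and then apply contravariant functoriality of $\Hom_{\DM}(-,G)$ to the resulting morphisms $\bigoplus_j M(X)(j)[2j]\to\bigoplus_i M(Y)(i)[2i]$. The only cosmetic difference is that the paper cites Voevodsky's fully faithful embedding $\Chow^\eff(k)_R\hookrightarrow\DM^\eff(k,R)$ directly, whereas you unpack the same identification via Poincar\'e duality, K\"unneth and tensor--Hom adjunction.
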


\begin{proof}
In \cite{V00b}, Voevodsky has proved that there is a fully faithful functor (see \cite[Corollary~4.2.5 and Theorem~4.3.7]{V00b} or \cite[Remark 5.3.21. (iii)]{Kellyldh}):
\begin{equation}\label{voevodsky}
    \Chow^\eff(k)_R \hookrightarrow \DM^{\eff}(k,R).
\end{equation} 
Thus an algebraic cycle $A=\Sigma_{l\in \Z} \alpha_l \in \CH^*(X\times_k Y)$ induces a morphism
\begin{equation}\label{even}
\bigoplus_{j\in\Z} M(X)(j)[2j] \to \bigoplus_{i\in\Z} M(Y)(i)[2i]
\end{equation}
whose $(j,i)$-component is given by $A_{i-j+\dim Y}$. Also $A$ induces a morphism
\begin{equation}\label{odd}
\bigoplus_{j\in\Z} M(X)(j)[2j+1] \to \bigoplus_{i\in\Z} M(Y)(i)[2i+1]
\end{equation}
whose $(j,i)$-component is given by $A_{i-j+\dim Y}$. Morphisms \eqref{even} and \eqref{odd} are functorial in Chow groups, thus functors 
\[
G^i_{\text{even}}: \Sm\Proj(k) \to (R\text{-mod}); \text{  } X \mapsto \Hom_{\DM(k,R)}(M(X)(i)[2i],G)
\]
for $i\in\Z$ satisfies conditions (c-1) and (c-2), and also functors
\[
G^i_{\text{even}}: \Sm\Proj(k) \to (R\text{-mod}); \text{  } X \mapsto \Hom_{\DM(k,R)}(M(X)(i)[2i+1],G)
\]
for $i\in\Z$ satisfies conditions (c-1) and (c-2).
\end{proof}

We shall extend this story to an object $G\in \DM^\eff(k,R)$. By Voevodsky's cancellation theorem \cite{cancel}, we have the fully faithful $R$-linear functor $\DM^\eff(k,R) \hookrightarrow \DM(k,R)$. Define $G^{j}_{\text{even}}(X)$ and $G^{j}_{\text{odd}}(X)$ as 
\[
\Hom_{\DM}(M(X)(j)[2j],G) \text{  and  }\Hom_{\DM}(M(X)(j)[2j+1],G),
\]
respectively, for any smooth projective variety $X$ and $j\in\Z$. Then $G_{\text{even}}=\bigoplus G^{j}_{\text{even}}$ and $G_{\text{odd}}=\bigoplus G^{j}_{\text{odd}}$ are Manin invariants. But we don't wish to do this story, because it is hard to calculate $\Hom_{\DM}(M(X)(j)[2j],G)$ in the case $j<0$. For example, there is an object $\underline{\Omega}^n\in\DM^{\eff}(k,k)$ which represents Hodge cohomology for smooth proper varieties, moreover for $j\geq 0$ we have an isomorphism
\[
\Hom_{\DM}(M(X)(j)[2j],\underline{\Omega}^n[m]) \simeq H^{n-j}(X,\Omega_{X/k}^{n-j}).
\]
On the other hand, for $j< 0$, it is very hard to calculate $\Hom_{\DM}(M(X)(j)[2j],\underline{\Omega}^n[m])$, and actually we don't know it yet:
\[
\Hom_{\DM}(M(X)(j)[2j],\underline{\Omega}^n[m])=~???.
\]
To avoid this, we will assume the object $G$ satisfies a condition on the finiteness of the cohomological dimension.
\begin{defn}
For an object $G\in \DM^\eff(k,R)$, we say $G$ satisfies \textit{finiteness of cohomological dimension} if the following condition holds:
\begin{itemize}
    \item There is an increasing sequence of natural numbers $\{a_d\}_{d\in\Z}$ satisfying the following: for a natural number $d$, given a smooth projective variety $X$ whose dimension is less than $d$, then we have $\Hom_{\DM^\eff}(M(X),G[m])=0$ for any $m> a_d$.
\end{itemize}
We call such a sequence $\{a_n\}$ \textit{cohomological dimension of $G$}.
\end{defn}
\begin{example}
$\{0\}_{n}$ is a cohomological dimension of $\Z\in\DM^\eff(k,\Z)$ since we know
\[
\Hom_{\DM^\eff}(M(X),\Z[m]) \simeq H^{m,0}(X) \simeq \left\{
\begin{array}{ll}
\Z & m=0 \\
0 & m>0
\end{array}
\right.
\]
for any variety $X$. $\{d\}_{d}$ is a cohomological dimension of $\underline{\Omega^m}\in\DM^\eff(k,k)$.
\end{example}

For an $R$-linear category $\mathcal{A}$ containing all products and an $R$-linear triangulated functor $\sG:\DM^\eff(k,R) \to D(\mathcal{A})$ which sends coproducts to products, by Brown representability theorem \cite[Theorem~3.1]{Neeman} there is an object $G\in \DM^\eff(k,R)$ such that following isomorphisms holds for any motive $M\in\DM^\eff(k,R)$ and $m \in \Z$:
\[
\Hom_{\DM^\eff}(M,G[m]) \simeq H^m(\sG(M)).
\]
We shall say that \textit{the object $G\in \DM^\eff$ represents $\mathcal{G}$}. 
\begin{prop}\label{1-1}
We assume $\mathcal{A}$ has all products. Given an object $G\in \DM^\eff(k,R)$ satisfying finiteness of cohomological dimension and which represents an $R$-linear triangulated functor $\DM^\eff(k,R) \to D(\mathcal{A})$, and choose a cohomological dimension $\{a_n\}$ of $G$. Choose natural numbers $d$ and $m>a_{3d}$. We set
\[
G^j_m(X)=\left\{
\begin{array}{ll}
\Hom_{\DM^\eff}(M(X)(j)[2j],G[m])& j>2d \\
~~~0 & j\leq 2d
\end{array}
\right.
\]
for any smooth projective variety $X$ whose dimension is less than $d$. Then 
\[
G_m=\bigoplus G_m^{j}:\Sm\Proj(k) \to \mathcal{A}
\] is a $d$-Manin invariant with $R$-coefficients.
\end{prop}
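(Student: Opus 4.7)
The plan is to reduce the proposition to a single technical vanishing lemma, after which the construction of cycle-induced maps and verification of the two axioms follow exactly as in the preceding proposition, with the truncation at $j \leq 2d$ handled as a separate consistency check. The key vanishing I would prove first is: for any smooth projective $X$ over $k$ with $\dim X \leq d$, any integer $r$ with $0 \leq r \leq 2d$, and the fixed $m > a_{3d}$,
\[
\Hom_{\DM^\eff}(M(X)(r)[2r], G[m]) \;=\; 0.
\]
The input is Voevodsky's projective bundle formula $M(\PP^{2d} \times X) \cong \bigoplus_{r=0}^{2d} M(X)(r)[2r]$ in $\DM^\eff(k, R)$. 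Since $\PP^{2d} \times X$ is smooth projective of dimension at most $3d$, the cohomological dimension hypothesis with $m > a_{3d}$ forces $\Hom_{\DM^\eff}(M(\PP^{2d} \times X), G[m]) = 0$, and each direct summand on the right-hand side of the projective bundle formula then vanishes.

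Exactly as in the proof of the preceding proposition, Voevodsky's embedding \eqref{voevodsky} turns an algebraic cycle $\alpha \in \CH^i(X \times Y)_R$ (with $\dim X, \dim Y \leq d$) into morphisms of the form $M(X)(j)[2j] \to M(Y)(j + i - \dim Y)[2(j + i - \dim Y)]$ in $\DM^\eff$; applying $\Hom_{\DM^\eff}(-, G[m])$ yields the candidate maps between the $G_m^{\bullet}$ groups required by (c-1). Compatibility with the truncation $G_m^j = 0$ for $j \leq 2d$ must then be verified. For $j > 2d$, the codimension bound $0 \leq i \leq \dim(X \times Y) \leq 2d$ gives $j + i - \dim Y \geq j - \dim Y > 2d - d \geq 0$, so all twists that appear in $\DM^\eff$ are strictly positive and every Hom group in question is defined there. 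Whenever the index of a relevant $G_m^{\bullet}$ group falls in $[0, 2d]$, the key vanishing shows the actual Hom is already zero, so agreement with $G_m^{\bullet} = 0$ is automatic; when the source index of the map is at most $2d$, the source group is zero by definition and the map is trivially zero. The graph case of (c-1) then follows from the identification $[\Gamma_f] \mapsto M(f)$ under \eqref{voevodsky}, and axiom (c-2) is the compatibility of composition of correspondences with composition in $\DM^\eff$ inherited from Voevodsky's fully faithful embedding.

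The main obstacle is the key vanishing itself: it is what forces the hypothesis $m > a_{3d}$ rather than the more modest $m > a_d$, because one must inflate by the $2d$-dimensional factor $\PP^{2d}$ to extract $M(X)(r)[2r]$ as a direct summand via the projective bundle formula, and that inflation pushes the relevant dimension from $d$ up to $3d$. This is also what pins the cutoff in the definition of $G_m^j$ at exactly $j = 2d$: with a lower cutoff there would be pairs $(j, i)$ for which $j + i - \dim Y$ falls into a range where the Hom group need not vanish while $G_m^{j + i - \dim Y}$ is declared zero, breaking compatibility of $G_m$ with pullback along algebraic correspondences.
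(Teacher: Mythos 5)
Your proposal is correct and follows essentially the same route as the paper: the central step in both is the vanishing $\Hom_{\DM^\eff}(M(X)(r)[2r],G[m])=0$ for $0\le r\le 2d$, obtained by realizing $M(X)(r)[2r]$ as a direct summand of $M(\PP^{2d}\times_k X)$ (of dimension at most $3d$) and invoking $m>a_{3d}$, after which functoriality under correspondences is inherited from Voevodsky's embedding exactly as in the preceding proposition. The only difference is presentational: the paper packages the truncation compatibility as a commutative diagram of motives with split-injective vertical arrows, whereas you check it index by index on the Hom groups.
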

Before we prove the proposition, we shall give an example. Binda-Park-{\O}stv{\ae}r constructed the Hodge object $\ul{\Omega}^n$ in the triangulated category of log motives $\logDM(k,k)$ for any $n\in\N$ \cite[section~9]{BPO} which represents Hodge cohomology group for log smooth pairs. They also showed that there is an adjunction \cite[(5.2.1)]{BPO}
\[
\xymatrix{
\log\DM^\eff(k,R) \ar@/^15pt/[rr]^-{\omega_{\sharp}}_{} & \bot  & \DM^\eff(k,R) \ar@/^15pt/[ll]^-{R\omega^*}
}.
\]
Under the assumption of resolution of singularities, they also showed that for smooth proper variety $X$ over $k$, there is an isomorphism $R\omega^*M(X)\simeq M(X,\text{triv}_X)$, where $\text{triv}_X$ is the trivial log structure on $X$, and there are adjunctions \cite[proposition~8.2.12]{BPO}
\[
\xymatrix{
\log\DM^\eff(k,R) \ar@/^20pt/[r]^{\omega_{\sharp}}_{\bot} \ar@/_20pt/[r]_{R\omega_{*}}^{\bot} & \DM^\eff(k,R) \ar[l]^{R\omega^*}
}
\]  
In this setting, there is an object $\underline{\Omega_{\log}^n} \in  \logDM^{\eff}(k,k)$ which represents Hodge cohomology group \cite[section 9]{BPO}. Let us denote $\underline{\Omega}^n = R\omega^*\underline{\Omega^n_{\log}}$. For a smooth variety $X$, we choose a good compactification $(\ol{X},X^{\infty})$ of $X$, then there is an isomorphism of $k$-vector spaces:
\[
\Hom_{\DM^\eff(k,k)}(M(X),\underline{\Omega}^n[m]) \simeq H^m(\ol{X},\Omega^n_{\ol{X}/k}(\log |X^\infty|)).
\]
This induces that $\{n\}$ is a cohomological dimension of $\underline{\Omega^n}$. Let us compute the Hodge cohomology group of the Tate twist.  Firstly we recall the residue exact sequence. For an integer $i\geq 1$, we fix a hyperplane $H=X\times\PP^{i-1}\hookrightarrow X\times\PP^{i}$. This regular embedding induces the residue exact sequence:
\[
0 \to \Omega^m_{X\times\PP^i} \overset{a}{\to} \Omega^m_{X \times\PP^i}(\log H) \to \Omega^{m-1}_H \to 0
\]
where $a$ is the natural inclusion. This exact sequence induces the following long exact sequence:
\begin{alignat}{2}\label{BBBBB}
         &\rightarrow H^{n-1}_{\Zar}(H,\Omega^{m-1}_{H}) \rightarrow H^n_{\Zar}(X\times\PP^i,\Omega^m_{X\times\PP^i}) &&\overset{H^n(X\times\PP^i,a)}{\rightarrow} H^n_{\Zar}(X\times\PP^i,\Omega^m_{X\times\PP^i}(\log H))\\
        &\rightarrow H^{n}_{\Zar}(H,\Omega^{m-1}_{H})\rightarrow\cdots. && \nonumber
    \end{alignat}  
\begin{lemma}\label{splitsurj}
The map $H^n(X\times\PP^i,a)$ is a split surjective for all $n$.
\end{lemma}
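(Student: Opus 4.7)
The plan is to reduce via Künneth to a pure logarithmic Hodge-cohomology calculation on $(\PP^i,\PP^{i-1})$. Since $H=X\times H'$ with $H'=\PP^{i-1}\subset\PP^i$, a local-coordinate computation gives the direct-sum decomposition
\[
\Omega^m_{X\times\PP^i}(\log H) \;=\; \bigoplus_{s+t=m}\pi_X^*\Omega^s_X \otimes \pi_{\PP^i}^*\Omega^t_{\PP^i}(\log H'),
\]
and an analogous one for $\Omega^m_{X\times\PP^i}$ without logs, and these decompositions are compatible with the inclusion $a$. Applying the Künneth formula on the smooth projective $k$-scheme $X\times\PP^i$, the map $H^n(X\times\PP^i,a)$ breaks up into a direct sum, over $s+t=m$ and $p+q=n$, of tensor products of the form
\[
\mathrm{id}_{H^p(X,\Omega^s_X)} \otimes_k \bigl(H^q(\PP^i,\Omega^t_{\PP^i}) \to H^q(\PP^i,\Omega^t_{\PP^i}(\log H'))\bigr).
\]
Hence the question reduces to showing that each map $H^q(\PP^i,\Omega^t_{\PP^i})\to H^q(\PP^i,\Omega^t_{\PP^i}(\log H'))$ is split surjective over $k$.

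I would then compute the logarithmic cohomology directly. Since $\Omega^0_{\PP^i}(\log H')=\mathcal{O}_{\PP^i}$, the case $t=0$ reduces to standard Hodge cohomology of $\PP^i$; for $t\ge 1$ I use the residue exact sequence
\[
0 \to \Omega^t_{\PP^i} \to \Omega^t_{\PP^i}(\log H') \to \Omega^{t-1}_{H'} \to 0,
\]
together with the Bott-type vanishing $h^{q,t}(\PP^j)=\delta_{q,t}$ for $0\le t\le j$, which holds in any characteristic. In the associated long exact sequence nearly every term vanishes; the only potentially nonzero pieces force the boundary map $H^{q}(\PP^{i-1},\Omega^{q}_{\PP^{i-1}})\to H^{q+1}(\PP^i,\Omega^{q+1}_{\PP^i})$ to appear, and this boundary map is cup product with the first Chern class of $\mathcal{O}_{\PP^i}(H')$, i.e., with the hyperplane class. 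Since that cup product is an isomorphism of one-dimensional $k$-vector spaces, one concludes $H^q(\PP^i,\Omega^t_{\PP^i}(\log H'))=k$ for $(q,t)=(0,0)$ and $0$ otherwise.

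Combining the two ingredients, on each Künneth summand the map $H^q(\PP^i,\Omega^t)\to H^q(\PP^i,\Omega^t(\log H'))$ is either the identity $k\to k$ (when $(q,t)=(0,0)$) or has zero target, hence is trivially split surjective. In total, $H^n(X\times\PP^i,a)$ is identified with the projection of $\bigoplus_{t=0}^{\min(i,m)} H^{n-t}(X,\Omega^{m-t}_X)$ onto its $t=0$ summand $H^n(X,\Omega^m_X)$, which is split surjective with the obvious section. The main technical point to verify is the identification of the connecting homomorphism with cup product with the hyperplane class; I would establish this from the Poincaré-residue description of the boundary map, which has the further merit of working in arbitrary characteristic, a point that matters because the paper applies the result also over fields of characteristic $p>0$.
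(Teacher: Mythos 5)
Your proof is correct, but it takes a genuinely different route from the paper. The paper does not compute anything on the $\PP^i$ factor: it chooses a section $s:X\hookrightarrow X\times\PP^i$ meeting $H$ properly, quotes the $\bcube$-invariance of logarithmic Hodge cohomology from Binda--Park--{\O}stv{\ae}r to see that $s^*:H^n(X\times\PP^i,\Omega^m(\log H))\to H^n(X,\Omega^m_X)$ is an isomorphism, and then reads the splitting off from the identity $s^*\circ H^n(X\times\PP^i,a)\circ H^n(f,\Omega^m)=\mathrm{id}$; the section of $H^n(X\times\PP^i,a)$ is $H^n(f,\Omega^m)\circ s^*$. Your argument instead makes everything explicit: the decomposition $\Omega^m_{X\times\PP^i}(\log H)=\bigoplus_{s+t=m}\pi_X^*\Omega^s_X\otimes\pi_{\PP^i}^*\Omega^t_{\PP^i}(\log H')$ is valid because the log structure is pulled back from the $\PP^i$ factor, K\"unneth applies over a field, and your computation $H^q(\PP^i,\Omega^t_{\PP^i}(\log H'))=k\cdot\delta_{(q,t),(0,0)}$ is correct (the connecting map is indeed the Gysin map, an isomorphism between the relevant one-dimensional spaces; alternatively one can shortcut this step via $\Omega^1_{\PP^i}(\log H')\cong\sO_{\PP^i}(-1)^{\oplus i}$, which kills all cohomology for $t\ge 1$ by Serre's computation). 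What each approach buys: the paper's proof is shorter given the cited input, but treats the $\bcube$-invariance as a black box; yours is self-contained, works verbatim in any characteristic, and identifies $H^n(X\times\PP^i,a)$ as the projection $\bigoplus_{t\ge 0}H^{n-t}(X,\Omega^{m-t}_X)\surj H^n(X,\Omega^m_X)$, which in particular exhibits the kernel explicitly --- information the paper has to extract afterwards in \eqref{KERKER} from the same diagram.
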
 
\begin{proof}
We take a section $s:X \hookrightarrow X\times\PP^i$ which intersects properly with $H$. Since $s$ intersects properly with $H$, there is the natural map $s^*:H^n_{\Zar}(X\times\PP^i,\Omega^m_{X\times\PP^i}(\log H)) \to H^n_{\Zar}(X,\Omega^m_{X})$ induced by $s$, and it is an isomorphism \cite[corollary~9.2.2]{BPO}. We call this isomorphism $\bcube$-invariance. 
There is the following sequence of varieties
\[
\xymatrix{X  \ar@/^12pt/[rr]^{id} \ar[r]_-s &  X\times\PP^i \ar[r]_{f} & X
}
\]
where $f$ is the projection. This induces the following sequence of $k$-vector spaces
\begin{equation}\label{muchi}
\xymatrix{ H^n(X,\Omega^m_X) &  H^n(X\times\PP^i,\Omega^m_{X\times\PP^i}) \ar[l]_-{H^n(s,\Omega^m)} & H^n(X,\Omega^m_X) \ar@/^12pt/[ll]^{id} \ar[l]_-{H^n(f,\Omega^m)}
}
\end{equation}
where $H^n(,\Omega^m)$ (resp. $H^n(f,\Omega^m)$ ) is the map induced by the natural map $\Omega^m_{X\times\PP^i} \to s_* \Omega^m_X$ (resp. $\Omega^m_{X} \to f_* \Omega^m_{X\times\PP^i}$).
Since the map $H^n(s,\Omega^m)$ is the composition of $H^n(X\times\PP^i,a)$, and $s^*$, by the $\bcube$-invariance of $s^*$ we have the following sequence:
\begin{equation}\label{kato}
\xymatrix{ H^n(X,\Omega^m_X) & H^n_{\Zar}(X\times\PP^i,\Omega^m_{X\times\PP^i}(\log H)) \ar[l]_-{\simeq}^-{s^*} & H^n(X\times\PP^i,\Omega^m_{X\times\PP^i}) \ar[l]_-{H^n(X\times\PP^i,a)} \ar@/_25pt/[ll]^{H^n(s,\Omega^m)}& H^n(X,\Omega^m_X) \ar@/^25pt/[lll]^{id} \ar[l]_-{H^n(f,\Omega^m)}.
}
\end{equation}
Thanks to this sequence, we obtain the claim.
\end{proof}
We keep the notation. By the diagram (\ref{muchi}) we obtain the equality $\ker(H^n(s,\Omega^m))=\Coker(H^n(f,\Omega^m))$, and by the diagram (\ref{kato}) we have the equality \begin{equation*}
\Ker(H^n(X\times\PP^i,a))\simeq\Ker(H^n(s,\Omega^m))=\Coker(H^n(f,\Omega^m)).
\end{equation*}
Besides by lemma~\ref{splitsurj} and the long exact sequence~(\ref{BBBBB}) we obtain the following:
\begin{equation}\label{KERKER}
H^{n-1}_{\Zar}(H,\Omega^{m-1}_H) \simeq \Ker(H^n(X\times\PP^i,a))\simeq\Coker(H^n(f,\Omega^m)).
\end{equation}
In the case $i=1$, the hyper plain $H=X$, the projection $f:X\times\PP^1 \to X$ induces an isomorphism in $\DM^\eff(k,k)$:
\[
M(X\times\PP^1 )_k \simeq M(X)_k\oplus M(X)_k(1)[2]
\]
where the projection to the first factor $M(X\times\PP^1)_k \to M(X)_k$ is the natural map $M(f)$. This decomposition induces the following isomorphism
\begin{equation}\label{AAAAA}
\Coker({\DM^\eff(k,k)}(M(f),\ul{\Omega^m}[n])) \simeq {\DM^\eff(k,k)}(M(X)(1)[2],\ul{\Omega^m}[n]).\end{equation}
We know that the object $\ul{\Omega^m}$ represents the Hodge cohomology, thus we have the following:
\[
\Coker({\DM^\eff(k,k)}(M(f),\ul{\Omega^m}[n])) \simeq \Coker(H^n(f,\Omega^m)),
\]
thanks to this isomorphism, (\ref{KERKER}) and (\ref{AAAAA}), we obtain the following
\begin{equation}\label{TATESHIFT}
H^{n-1}_{\Zar}(X,\Omega^{m-1}_X) \simeq {\DM^\eff(k,k)}(M(X)(1)[2],\ul{\Omega^m}[n]).
\end{equation}
Now we start to compute the Hodge realization of the Tate twist $M(X)_k(i)[2i]$. At first, we prove the following:
\begin{prop}\label{ISHIKAWA} For a proper smooth variety $X$, there is an isomorphism of $k$-vector spaces:
\begin{eqnarray*}
{\DM^\eff(k,k)}(M(X)_k(i)[2i],\ul{\Omega^m}[n])\simeq {\DM^\eff(k,k)}(M(X)_k(i-1)[2i-2],\ul{\Omega^{m-1}}[n-1]).
\end{eqnarray*}
\end{prop}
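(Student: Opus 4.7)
The plan is to deduce the general case directly from the $i = 1$ case, which is equation (\ref{TATESHIFT}), by applying that identity both to $X \times \PP^{i-1}$ and to $X \times \PP^{i-2}$, expanding via the projective bundle formula, and using the naturality of the construction with respect to the closed immersion $X \times \PP^{i-2} \hookrightarrow X \times \PP^{i-1}$.

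Concretely, let $Y_l = X \times \PP^{l-1}$, which is smooth and proper. Applying (\ref{TATESHIFT}) to $Y_l$ and using that $\ul{\Omega^{m-1}}$ represents Hodge cohomology for smooth proper varieties gives
\[
\DM^{\eff}(k,k)\bigl(M(Y_l),\ul{\Omega^{m-1}}[n-1]\bigr) \simeq \DM^{\eff}(k,k)\bigl(M(Y_l)(1)[2],\ul{\Omega^m}[n]\bigr).
\]
Expanding both sides via the projective bundle decomposition $M(Y_l) \simeq \bigoplus_{j=0}^{l-1} M(X)(j)[2j]$ yields the identity
\[
\bigoplus_{j=0}^{l-1} \DM^{\eff}\bigl(M(X)(j)[2j],\ul{\Omega^{m-1}}[n-1]\bigr) \simeq \bigoplus_{j=1}^{l} \DM^{\eff}\bigl(M(X)(j)[2j],\ul{\Omega^m}[n]\bigr), \quad (\ast_l)
\]
which I apply for $l = i$ and for $l = i-1$.

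Next, the closed immersion $Y_{i-1} \hookrightarrow Y_i$ induces a morphism $M(Y_{i-1}) \to M(Y_i)$ which, under the projective bundle decomposition, corresponds to the inclusion of the first $i-1$ summands $\bigoplus_{j=0}^{i-2} M(X)(j)[2j] \hookrightarrow \bigoplus_{j=0}^{i-1} M(X)(j)[2j]$. Since all ingredients in the proof of (\ref{TATESHIFT}) (the residue sequence, Lemma \ref{splitsurj} and the $\bcube$-invariance of $s^*$, the projective bundle formula, and the representability of $\ul{\Omega^{m}}$) are natural in the variety, the identities $(\ast_{i-1})$ and $(\ast_i)$ fit into a commutative square whose vertical arrows are the sub-direct-sum inclusions on each side. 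Taking cokernels of these vertical inclusions leaves the single surviving summands $\DM^{\eff}(k,k)(M(X)(i-1)[2(i-1)], \ul{\Omega^{m-1}}[n-1])$ on the left and $\DM^{\eff}(k,k)(M(X)(i)[2i], \ul{\Omega^m}[n])$ on the right, which must therefore be isomorphic, establishing the proposition.

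The main obstacle I anticipate is verifying that the vertical arrows in the commutative square really correspond, on each side of $(\ast_i)$, to the sub-direct-sum inclusions. This graded compatibility reduces to checking naturality in $X$ of each piece of the construction of (\ref{TATESHIFT})---the residue sequence, Lemma \ref{splitsurj}, and the identifications (\ref{KERKER}) and (\ref{AAAAA})---each of which is manifestly functorial, so the verification is routine but deserves careful bookkeeping.
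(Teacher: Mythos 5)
Your proof is correct, but it is organized differently from the paper's. The paper proves the statement in one step at each level $i$: it takes the hyperplane $j\colon X\times\PP^{i-1}\hookrightarrow X\times\PP^i$ together with a second hyperplane $H$ meeting it properly, forms the morphism of residue sequences induced by $j$, splits both rows using Lemma~\ref{splitsurj} and the $\bcube$-invariance of the log Hodge cohomology, and applies the nine lemma to identify $\Ker\bigl(H^{n-1}(j',\Omega^{m-1})\bigr)\simeq\Ker\bigl(H^{n}(j,\Omega^{m})\bigr)$, which are exactly the two Hom groups in question because the cones of $M(j')$ and $M(j)$ are $M(X)(i-1)[2i-2]$ and $M(X)(i)[2i]$. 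You instead treat the $\PP^1$-level identity \eqref{TATESHIFT} as a black box, apply it to $X\times\PP^{i-1}$ and $X\times\PP^{i-2}$, and extract the top graded pieces by naturality along the linear embedding $X\times\PP^{i-2}\hookrightarrow X\times\PP^{i-1}$ and the projective bundle decomposition. The ingredients are the same (residue sequence, $\bcube$-invariance, split injectivity of linear embeddings), but your dévissage avoids re-running the residue-sequence analysis on $X\times\PP^i$ for each $i$, at the price of the naturality check for \eqref{TATESHIFT}, which, as you say, is routine since the boundary maps, the diagrams \eqref{muchi}--\eqref{kato}, and \eqref{AAAAA} are all functorial once the hyperplane and section in $(-)\times\PP^1$ are fixed once and for all. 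One bookkeeping correction: the vertical maps induced by $M(\iota)$ on Hom groups are the \emph{projections} onto the sub-direct-sums, not inclusions, so you should take kernels rather than cokernels (or first pass to the splittings); since all the short exact sequences in sight are split, this does not affect the conclusion.
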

\begin{proof}
We will use a projection formula and $(\PP^n,\PP^{n-1})$-invariance of the Hodge cohomology. Consider projections $f_i: X\times \PP^i \to X$ and $f_{i-1}:X\times \PP^{i-1} \to X$ and fix a hyper plain $j:X\times \PP^{i-1} \to X\times\PP^i$, take another hyper plain $H\simeq X\times\PP^{i-1}$ of $X\times \PP^i$ intersects properly with $j$.
\[
\xymatrix{
j^*H \ar[rr]^{j'} \ar[d] && H \ar[d] \\
X\times \PP^{i-1} \ar[rr]^j \ar[rd]_-{f_{i-1}} && X\times\PP^i  \ar[ld]^-{f_{i}}\\
&X  &
}
\]
Thanks to the projection formula, the cone of $M(j):M(X\times\PP^{i-1})_k \hookrightarrow M(X \times \PP^i)_k$ is equal to $M(X)_k(i)[2i]$ and also the cone of $M(j'):M(j^*H)_k \hookrightarrow M(H)_k$ is equal to $M(X)_k(i-1)[2i-2]$. Let us study a morphism between the residue sequences induced by $j$:
\[
\xymatrix{
0 \ar[r] & \Omega^m_{X\times \PP^i} \ar[r] \ar[d]& \Omega^m_{X\times \PP^i}(\log H) \ar[r] \ar[d]& \Omega^{m-1}_{H} \ar[r] \ar[d]& 0\\
0 \ar[r]& j_*\Omega^m_{X\times\PP^{i-1}} \ar[r]& j_*\Omega^m_{X\times\PP^{i-1}}(\log j^*H) \ar[r]& j_*\Omega^{m-1}_{j^*H} \ar[r]& 0
}
\]
Now we know $f_{i-1}$ and $f_{i}$ induce an isomorphism $H^n(X\times\PP^{i-1},\Omega^m_{X\times\PP^{i-1}}(\log j^*H)) \simeq H^n(X,\Omega^m_X)$ and an isomorphism $H^n(X \times \PP^i,\Omega^m_{X \times \PP^i}(\log H)) \simeq H^n(X,\Omega^m_X)$. By Lemma~\ref{splitsurj} there is a morphism between split exact sequences:
\begin{equation}\label{ninenine}
\xymatrix{
0 \ar[r] & H^{n-1}(H,\Omega^{m-1}_{H}) \ar[r] \ar[d]^{H^{n-1}(j',\Omega^{m-1})}& H^n(X\times \PP^i,\Omega^m_{X\times \PP^i}) \ar[r] \ar[d]^{H^n(j,\Omega^m)}& H^n(X\times \PP^i,\Omega^m_{X\times \PP^i}(\log H)) \ar[r] \ar[d]^{\simeq}& 0\\
0 \ar[r]& H^{n-1}(j^*H,\Omega^{m-1}_{j^*H}) \ar[r]& H^n(X\times\PP^{i-1},\Omega^m_{X\times\PP^{i-1}}) \ar[r]& H^n(X\times\PP^{i-1},\Omega^m_{X\times\PP^{i-1}}(\log j^*H)) \ar[r]& 0
}
\end{equation}
Since morphisms $M(j'):M(j^*H) \to M(H)$ and $M(j):M(X\times\PP^{i-1}) \to M(X \times \PP^{i})$ are split injective, we know that the map $H^{n-1}(j',\Omega^{m-1})\simeq {\DM^\eff(k,k)}(M(j'),\ul{\Omega^{m-1}}[n-1])$ and the map $H^{n}(j,\Omega^{m})\simeq {\DM^\eff(k,k)}(M(j),\ul{\Omega^m}[n])$ are split surjective. Now we apply the nine lemma to the diagram (\ref{ninenine}) we obtain that
\[
\Ker(H^{n-1}(j',\Omega^{m-1}))\simeq \Ker(H^{n}(j,\Omega^{m})),
\]
thanks to this isomorphism we have the following
\begin{eqnarray*}
 {\DM^\eff(k,k)}(M(X)_k(i-1)[2i-2],\ul{\Omega^{m-1}}[n-1]) &\simeq & \Ker({\DM^\eff(k,k)}(M(j'),\ul{\Omega^{m-1}}[n-1])) \\   
&\simeq& \Ker(H^{n-1}(j',\Omega^{m-1})) \\
&\simeq& \Ker(H^{n}(j,\Omega^{m})) \\
&\simeq &  \Ker({\DM^\eff(k,k)}(M(j),\ul{\Omega^{m}}[n])) \\
&\simeq &  {\DM^\eff(k,k)}(M(X)_k(i)[2i],\ul{\Omega^m}[n]).
\end{eqnarray*}
This is what we want. 
\end{proof}
As a corollary of this proposition, we obtain the following:
\begin{cor}\label{STisom2} Take an integer $m \geq 1$. For an integer $m\geq i\geq 0$ and any $n\in \Z$ and any smooth proper variety $X$, there is an isomorphism of $k$-vector space
\begin{equation}\label{STSTST}
{\DM^\eff(k,k)}(M(X)_k(i)[2i],\ul{\Omega^m}[n])\overset{\phi^n_{X,i}}{\simeq} H^{n-i}_{\Zar}(X,\Omega^{m-i}_{X}),
\end{equation}
and for an integer $i>m$ and an integer $n\in\Z$ there is a vanishing
\begin{equation}\label{vanishing}
{\DM^\eff(k,k)}(M(X)_k(i)[2i],\ul{\Omega^m}[n]) =0
\end{equation}
\end{cor}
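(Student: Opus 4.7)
The plan is to derive both statements by iterating Proposition~\ref{ISHIKAWA}, taking as base case the previously recorded identification $\Hom_{\DM^\eff}(M(X),\ul{\Omega^{m'}}[n']) \simeq H^{n'}(X,\Omega^{m'}_X)$ valid for smooth proper $X$ and $m' \geq 0$.

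For the isomorphism~(\ref{STSTST}), fix $m \geq i \geq 0$ and apply Proposition~\ref{ISHIKAWA} exactly $i$ times, each application decreasing the Tate weight, the exterior power index and the cohomological degree simultaneously by one. This yields
\[
\Hom_{\DM^\eff}(M(X)(i)[2i],\ul{\Omega^m}[n]) \simeq \Hom_{\DM^\eff}(M(X),\ul{\Omega^{m-i}}[n-i]),
\]
and since $m - i \geq 0$, the Hodge representation identifies the right-hand side with $H^{n-i}(X,\Omega^{m-i}_X)$. The composite of these isomorphisms is the desired $\phi^n_{X,i}$.

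For the vanishing~(\ref{vanishing}), assume $i > m$ and iterate Proposition~\ref{ISHIKAWA} exactly $m$ times; this reduces the claim to showing
\[
\Hom_{\DM^\eff}(M(X)(i')[2i'],\ul{\Omega^0}[N]) = 0
\]
for $i' := i-m \geq 1$ and $N := n-m$. The target $\ul{\Omega^0}$ is the structure sheaf, so we rerun the mechanism of the proof of Proposition~\ref{ISHIKAWA} in the boundary case $m = 0$. Here the residue sequence degenerates, since $\Omega^{-1}_H = 0$ and $\sO_{X \times \PP^{i'}}(\log H) = \sO_{X \times \PP^{i'}}$. Consequently, the analogue of diagram~(\ref{ninenine}) becomes two short exact sequences whose leftmost entries vanish, and the nine-lemma forces the kernel of the middle vertical $H^N(j,\sO)$ to be zero. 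As in the proof of Proposition~\ref{ISHIKAWA}, this kernel is identified, via the split distinguished triangle
\[
M(X \times \PP^{i'-1}) \to M(X \times \PP^{i'}) \to M(X)(i')[2i'],
\]
with $\Hom_{\DM^\eff}(M(X)(i')[2i'],\sO[N])$, which is therefore zero.

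The main delicate point is the treatment of the boundary $m = 0$: Proposition~\ref{ISHIKAWA} is proved under the tacit hypothesis $m \geq 1$ because its residue sequence carries nontrivial information only in that range, so in the vanishing case one cannot simply invoke the proposition one extra time but must instead supplement the iteration with the degenerate analysis above. All remaining ingredients are formal bookkeeping that propagate cleanly through the induction.
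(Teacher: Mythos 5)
Your proposal is correct, and the first half (the case $m\geq i\geq 0$) is exactly the paper's argument: iterate Proposition~\ref{ISHIKAWA} $i$ times and conclude with the Hodge representability of $\ul{\Omega^{m-i}}$. For the vanishing when $i>m$, you also follow the paper in iterating only $m$ times so as to stop at $\ul{\Omega^0}$ (correctly flagging that the residue sequence carries no content below exponent $0$), but you then diverge in how you kill $\Hom(M(X)(i')[2i'],\ul{\Omega^0}[N])$: you rerun the degenerate residue-sequence/nine-lemma argument, where $\Omega^{-1}_H=0$ forces the left column of the analogue of diagram~\eqref{ninenine} to vanish and hence $\Ker\bigl(H^N(j,\sO)\bigr)=0$, whereas the paper instead uses the projection $f:X\times\PP^{i-m}\to X$, notes that $\DM^\eff(k,k)(M(f),\ul{\Omega^0}[l])$ is an isomorphism, and identifies its cokernel with $\bigoplus_{e=1}^{i-m}\Hom(M(X)(e)[2e],\ul{\Omega^0}[l])$ via the projection formula. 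The two mechanisms are equivalent --- both reduce to the $\PP^n$-invariance of coherent $\sO$-cohomology --- but the paper's cokernel computation kills all the twisted summands $1\leq e\leq i-m$ at once and avoids having to re-examine the boundary case of the residue sequence, while your version stays uniformly within the inductive framework of Proposition~\ref{ISHIKAWA}. Either route is valid.
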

\begin{proof}
In the case $m\geq i\geq 0$, thanks to proposition~\ref{ISHIKAWA} we have the following:
\begin{eqnarray*}
 {\DM^\eff(k,k)}(M(X)_k(i)[2i],\ul{\Omega^m}[n])  &\simeq & {\DM^\eff(k,k)}(M(X)_k(i-1)[2i-2],\ul{\Omega^{m-1}}[n-1])\\  
&\simeq& {\DM^\eff(k,k)}(M(X)_k(i-2)[2i-4],\ul{\Omega^{m-2}}[n-2]) \\
&\simeq& \cdots \\
&\simeq &  {\DM^\eff(k,k)}(M(X)_k,\ul{\Omega^{m-i}}[n-i]) \\
&\simeq & H^{n-i}_{\Zar}(X,\Omega^{m-i}_X).
\end{eqnarray*}
Thus the statement (\ref{STSTST}) holds. In the case $i>m$, similarly thanks to proposition~\ref{ISHIKAWA} we have the following:
\begin{eqnarray*}
 {\DM^\eff(k,k)}(M(X)_k(i)[2i],\ul{\Omega^m}[n])  &\simeq & {\DM^\eff(k,k)}(M(X)_k(i-1)[2i-2],\ul{\Omega^{m-1}}[n-1])\\  
&\simeq& {\DM^\eff(k,k)}(M(X)_k(i-2)[2i-4],\ul{\Omega^{m-2}}[n-2]) \\
&\simeq& \cdots \\
&\simeq &  {\DM^\eff(k,k)}(M(X)_k(i-m)[2i-2m],\ul{\Omega^{0}}[n-m]).
\end{eqnarray*}
Consider the projection $f:X\times\PP^{i-m} \to X$. The morphism $f$ induces the isomorphism $H^l(X,\sO_X) \simeq H^l(X\times\PP^{i-m},\sO_{X\times\PP^{i-m}})$ for all $l$, hence the map $\DM^\eff(k,k)(M(f),\ul{\Omega^0}[l])$ is an isomorphism for all $l$. On the other hand, thanks to the projection formula, we know \[\Coker\bigl(\DM^\eff(k,k)(M(f),\ul{\Omega^0}[l])\bigr)\simeq \DM^\eff(k,k)(M(X)(1)[2]\oplus \cdots \oplus M(X)(i-m)[2i-2m],\ul{\Omega^0}[l]),
\]
thus we obtain the vanishing:
\[
\DM^\eff(k,k)(M(X)(1)[2]\oplus \cdots \oplus M(X)(i-m)[2i-2m],\ul{\Omega^0}[l])=0
\]
for all $l$. This implies the vanishing ${\DM^\eff(k,k)}(M(X)_k(i-m)[2i-2m],\ul{\Omega^{0}}[n-m])=0$. We can finish the proof. 
\end{proof}

Let us apply Proposition~\ref{1-1} for $G=\underline{\Omega^n}$. For $m>3d$ and a smooth projective variety $X$ whose dimension is less than $d$, by Corollary~\ref{STisom2} we have
\[
G^j_m(X)=\left\{
\begin{array}{ll}
\Hom_{\DM^\eff}(M(X)(j)[2j],\underline{\Omega^n}[m]) \simeq H^{m-j}(X,\Omega^{n-j}_{X/k})& j>2d \\
~~~0 & j\leq 2d
\end{array}
\right.
\]
Thus we know $G_m=\bigoplus_j G^j_m= \bigoplus_{j>d} H^{m-j}(X,\Omega^{n-j}_{X/k})\oplus \bigoplus_{j\leq d} 0 \simeq  \bigoplus_{j\in \Z} H^{m-j}(X,\Omega^{n-j}_{X/k})$ is a $d$-Manin invariant with $k$-coefficients, where we use $H^{m-j}(X,\Omega^{n-j}_{X/k})=0$ for $j\leq d$ since $m-j>d$.

\begin{proof}[Proof of Proposition \ref{1-1}]
Given smooth projective varieties $X$, $Y$ and $Z$ over $k$. For algebraic cycles $A=\Sigma_i \alpha_i \in \CH^*(X\times_k Y)$ and $B=\Sigma_i \beta_i \in \CH^*(Y\times_k Z)$, there are morphisms in $\DM(k,R)$:
\[
\bigoplus_{i\in \Z} M(X)(i)[2i] \overset{A}{\to} \bigoplus_{l\in \Z} M(Y)(l)[2l] \quad\text{  and  } \quad\bigoplus_{l\in \Z} M(Y)(l)[2l] \overset{B}{\to} \bigoplus_{e\in \Z} M(Z)(e)[2e].
\]
Let us restrict these morphisms. Voevodsky proved the following isomorphism \cite{V00b}
\[
\Hom_{\DM(k,R)}(M(X)(i)[2i],M(Y)(l)[2l]) \simeq \CH^{\dim Y + l-i}(X\times_k Y)_R.
\]
Since we assume $\max\{\dim X, \dim Y \}\leq d$, if $|l-i|>d$ then $\CH^{\dim Y + l-i}(X\times_k Y)_R=0$, thus we have 
\[
\Hom_{\DM(k,R)}(M(X)(i)[2i],M(Y)(l)[2l])  =0 \quad \text{ when }\quad |l-i|>d,
\]
and we have a commutative diagram
\[\xymatrix{
\displaystyle\bigoplus_{i\in\Z}M(X)(i)[2i] \ar@{->>}[d] \ar[r]^{A}  & \displaystyle\bigoplus_{l\in\Z}M(Y)(l)[2l] \ar@{->>}[d] \ar[r]^{B} & \displaystyle\bigoplus_{e\in\Z}M(Z)(e)[2e] \ar@{->>}[d] \\
\displaystyle\bigoplus_{i\geq 2d}M(X)(i)[2i] \ar[r] \ar[r]^{\tilde{A}}& \displaystyle\bigoplus_{l\geq d}M(Y)(l)[2l] \ar[r]^{\tilde{B}}& \displaystyle\bigoplus_{e\geq 0}M(Z)(e)[2e]  }\]
For any smooth projective variety $Z$ whose dimension is less than $d$, since we know $\dim \PP^d\times_k W \leq 3d$ and $m>a_{3d}$ thus we have \[\Hom_{\DM^{\eff}}(M(\PP^d \times_k W),G[m])=0.\]  
For $2d \geq l \geq 0$, $M(W)(l)[2l]$ is a direct summand of $\PP^{2d}\times_k W$, thus we have an equality \[
\Hom_{\DM^{\eff}}(M(W)(l)[2l],G[m])=0 \quad\text{ for any } \quad 0 \leq l \leq 2d.
\]

 We shall apply the functor $\Hom_{\DM}(-,G[m])$ to the above diagram.\footnotesize \[\xymatrix@C=6pt{
\displaystyle\bigoplus_{i\in\Z} \Hom_{\DM^{\eff}}(M(X)(i)[2i],G[m])     & \displaystyle\bigoplus_{l\in\Z}\Hom_{\DM^{\eff}}(M(Y)(l)[2l],G[m])  \ar[l] & \displaystyle\bigoplus_{e\in\Z}\Hom_{\DM^{\eff}}(M(Z)(e)[2e],G[m])  \ar[l]  \\
\displaystyle\bigoplus_{i\geq 2d}\Hom_{\DM^{\eff}}(M(X)(i)[2i],G[m]) \ar@{^{(}-_>}[u]^{\text{split}} & \displaystyle\bigoplus_{l\geq 2d}\Hom_{\DM^{\eff}}(M(Y)(l)[2l],G[m])  \ar@{^{(}-_>}[u]^{\text{split}} \ar[l] & \displaystyle\bigoplus_{e\geq 2d}\Hom_{\DM^{\eff}}(M(Z)(e)[2e],G[m]) \ar[l] \ar@{^{(}-_>}[u]^{\text{split}} }\]\normalsize
where vertical maps are split injective. Since the vertical maps preserve the graded structure, we obtain the claim.
\end{proof}
We shall extend this story to an $R$-linear functor $\DM_{\text{gm}}^{\eff}(k,R) \to D(\mathcal{A})$. \begin{defn}
For an $R$-linear functor $\Gamma:\DM_{\text{gm}}^\eff(k,R) \to D(\mathcal{A})$, we say $\Gamma$ satisfies \textit{finiteness of cohomological dimension} if the following condition holds:
\begin{itemize}
    \item There is an increasing sequence of natural numbers $\{a_d\}_{d\in\Z}$ satisfying the following: for a natural number $d$, given a smooth projective variety $X$ whose dimension is less than $d$, then $H^m(\Gamma(M(X)))=0$ for any $m> a_d$.
\end{itemize}
We call such a sequence $\{a_n\}$ \textit{cohomological dimension of $\Gamma$}.
\end{defn} 
By the same discussion, we can prove the following.
\begin{prop}\label{2}
We assume $\mathcal{A}$ has all coproducts. Given an $R$-linear triangulated functor $\Gamma:\DM_{\text{gm}}^\eff(k,R) \to D(\mathcal{A})$ which satisfies finiteness of cohomological dimension, and choose a cohomological dimension $\{a_n\}$ of $\Gamma$. Choose a natural numbers $d$ and $m>a_{3d}$. We set
\[
\Gamma^j_m(X)=\left\{
\begin{array}{ll}
H^m\bigl(\Gamma(M(X)(j)[2j])\bigr)& j>2d \\
~~~0 & j\leq 2d
\end{array}
\right.
\]
for any smooth projective variety $X$ whose dimension is less than $d$. Then 
\[
\Gamma_m=\bigoplus \Gamma_m^{j}:\Sm\Proj(k) \to \mathcal{A}
\] is a $d$-Manin invariant with $R$-coefficients.
\end{prop}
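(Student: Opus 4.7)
The plan is to transcribe the argument of Proposition~\ref{1-1} almost verbatim, with the contravariant representable functor $X\mapsto\Hom_{\DM^\eff}(M(X)(j)[2j],G[m])$ replaced throughout by the functor $X\mapsto H^m\bigl(\Gamma(M(X)(j)[2j])\bigr)$. Two structural ingredients of the original argument survive unchanged in the new setting: (i) cycles in $\CH^*(X\times Y)_R$ induce morphisms between direct sums of Tate twists $M(-)(i)[2i]$ in $\DM_{\gm}^\eff(k,R)$ via Voevodsky's fully faithful embedding $\Chow^\eff(k)_R\hookrightarrow\DM^\eff(k,R)$, with composition matching $r_*(p^*A\cdot q^*B)$; and (ii) the finiteness-of-cohomological-dimension hypothesis on $\Gamma$ yields the vanishing statements needed to truncate the direct sums.

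First I would fix smooth projective $X,Y,Z$ of dimension at most $d$ together with cycles $A\in\CH^*(X\times_k Y)_R$ and $B\in\CH^*(Y\times_k Z)_R$, and form the induced morphisms between the finite direct sums $\bigoplus_i M(X)(i)[2i]$, $\bigoplus_l M(Y)(l)[2l]$, $\bigoplus_e M(Z)(e)[2e]$ in $\DM_{\gm}^\eff(k,R)$. These direct sums may be taken finite because $\CH^{\dim Y+l-i}(X\times Y)_R=0$ whenever $|l-i|>d$, so only finitely many $(i,l)$-components are nonzero (and similarly on the $Y,Z$ side); this is the only place where the restriction to $\DM_{\gm}^\eff$, rather than to $\DM^\eff$, forces a minor modification of the original argument. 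Voevodsky's embedding gives the morphisms themselves, and their composition is controlled by the standard cycle-composition formula $r_*(p^*A\cdot q^*B)$.

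Next I would apply $\Gamma$ and take $H^m$, obtaining morphisms in $\mathcal{A}$ between $\bigoplus_i H^m\bigl(\Gamma(M(-)(i)[2i])\bigr)$ that are functorial in cycles. To justify restriction to indices $j>2d$, I invoke the projective bundle decomposition $M(\PP^{2d})\simeq\bigoplus_{l=0}^{2d}\Z(l)[2l]$, which realizes $M(W)(l)[2l]$ as a direct summand of $M(\PP^{2d}\times_k W)$ for any $0\leq l\leq 2d$ and any smooth projective $W$ with $\dim W\leq d$. Since $\dim(\PP^{2d}\times_k W)\leq 3d$ and $m>a_{3d}$, the cohomological dimension hypothesis gives $H^m\bigl(\Gamma(M(\PP^{2d}\times_k W))\bigr)=0$, and hence $H^m\bigl(\Gamma(M(W)(l)[2l])\bigr)=0$ for all $0\leq l\leq 2d$.

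Consequently, truncating the direct sums at $j>2d$ discards only zero components, so the induced morphism $\Gamma_m(A):\Gamma_m(Y)\to\Gamma_m(X)$ is well defined; axioms (c-1) and (c-2) of a $d$-Manin invariant then follow by the same diagram chase appearing at the end of the proof of Proposition~\ref{1-1}, now performed in the additive category $\mathcal{A}$ (which has all coproducts by hypothesis) instead of in the category of $R$-modules. The only mild obstacle is verifying that $H^m\circ\Gamma$ distributes over the finite direct sums in play; this is automatic because $\Gamma$ is $R$-linear and triangulated, hence preserves finite coproducts, and $H^m$ commutes with finite coproducts in any triangulated category. The substantive content of the proof is therefore concentrated in the vanishing argument of the third paragraph.
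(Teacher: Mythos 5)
Your proposal is correct and is exactly what the paper intends: the paper gives no separate argument for Proposition~\ref{2}, stating only that it follows ``by the same discussion'' as Proposition~\ref{1-1}, and your transcription — replacing $\Hom_{\DM^\eff}(M(X)(j)[2j],G[m])$ by $H^m\bigl(\Gamma(M(X)(j)[2j])\bigr)$, using the $\PP^{2d}\times_k W$ direct-summand trick for the vanishing in the range $0\leq l\leq 2d$, and truncating to finite direct sums since $\DM_{\gm}^\eff$ has only finite coproducts — is precisely that adaptation, spelled out in somewhat more detail than the paper itself provides.
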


\subsection{} \label{maninInvDerInv} In this section, we shall show Manin invariants relates to non-commutative algebraic geometry. Let us recall the category $\KM(k)$ of Gillet-Soul\'e’s $K$-motives from \cite[Def.5.1, 5.4, 5.6]{KMOTIVE} for a field $k$ and a commutative ring $R$. The category $\KM(k)_R$ is the idempotent completion of the category whose objects are the regular projective $k$-varieties over $k$ and whose morphisms, for regular projective $k$-varieties $X, Y$, are given by
\[
\KM(k)_R(X,Y) := K_0(X\times Y)_R.
\]
Composition is defined as follows:
\begin{eqnarray*}
\KM(k)_R(X,Y) \times \KM(k)_R(Y,Z) & \to& \KM(k)_R(X\times Z) \\
{[\sF]} \times {[\sG]} &\mapsto& {{p_{13}}_*[p_{12}^*\sF\otimes^{\mathbb{L}}p_{23}^*\sG]}.
\end{eqnarray*}
The identity of $X$ is given by the object in $\KM(k)_R(X,X)$ corresponding to $[{\Delta_X}_*\sO_X]\in K_0(X\times X)_R$ where $\Delta_X$ is the diagonal morphism of $X$. By Grothendieck Riemann-Roch theory \cite{BorelSerre}, if $R$ is a $\Q$-algebra there is an $R$-linear functor (see \cite{Tab14} for the details)
\[
\omega_{k,R}:\KM(k)_R \to C_{k,R}.
\]
For a derived equivalence of smooth projective varieties $F:D^b(X) \overset{\simeq}{\to} D^b(Y)$ over $k$, there is the object $P\in D^b(X\times Y)$ which represents $F$ and induces an isomorphism $X \overset{P}{\simeq } Y$ in $\KM(k)_R$ (see \cite{Orlov03} for the details). Given a Manin invariant $F=\bigoplus F^\bullet:\Sm\Proj(k) \to \mathcal{A}$ with $R$-coefficients, then there is a functor $\sF:C_{k,R} \to \mathcal{A}$. Thus if $R$ is a $\Q$-algebra then we have a sequence of functors
\[
\KM(k)_R \overset{\omega_{k,R}}{\to} C_{k,R} \overset{\sF}{\to} \mathcal{A}.
\]
Furthermore, we obtain the following result.
\begin{thm}
For a $\Q$-algebra $R$, any Manin invariant with $R$-coefficients $\sF=\sF^{\bullet}$ is a derived invariant for smooth projective varieties.
\end{thm}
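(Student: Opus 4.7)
The plan is to assemble the composite functor that has just been set up in the paragraph preceding the theorem and trace a derived equivalence through it. Concretely, let $X$ and $Y$ be smooth projective $k$-varieties with a $k$-linear equivalence $F: D^b(X) \simeq D^b(Y)$. The first step is to invoke Orlov's representability theorem, as recalled in the excerpt, to produce a kernel $P \in D^b(X \times Y)$ whose associated Fourier--Mukai transform realizes $F$; symmetrically, the quasi-inverse $F^{-1}$ is represented by a kernel $Q \in D^b(Y \times X)$. Passing to $K$-theory classes, $[P]$ and $[Q]$ define morphisms in $\KM(k)_R$ between $X$ and $Y$, and the fact that $F \circ F^{-1} \simeq \id$ and $F^{-1} \circ F \simeq \id$ translates, via the convolution formula for kernels, into the identities $[Q] \circ [P] = [\mathcal{O}_{\Delta_X}]$ and $[P] \circ [Q] = [\mathcal{O}_{\Delta_Y}]$ in $K_0$, i.e.\ into $[P]$ being an isomorphism $X \iso Y$ in $\KM(k)_R$.

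Next, since $R$ is a $\Q$-algebra, Grothendieck--Riemann--Roch yields the $R$-linear functor $\omega_{k,R}: \KM(k)_R \to C_{k,R}$; applying it to the isomorphism $[P]$ produces an isomorphism $\omega_{k,R}(X) \iso \omega_{k,R}(Y)$ in $C_{k,R}$. On objects $\omega_{k,R}$ is the identity (both categories have smooth projective varieties as objects), so this is an isomorphism $X \iso Y$ in Manin's category $C_{k,R}$. Finally, by the construction recalled after the definition, a Manin invariant $\sF = \bigoplus \sF^j$ with $R$-coefficients is exactly the data of a functor $\sF: C_{k,R} \to \mathcal{A}$ sending a variety $X$ to $\bigoplus_j \sF^j(X)$. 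Applying this functor to the isomorphism produced in $C_{k,R}$ yields the desired isomorphism $\sF(X) \iso \sF(Y)$ in $\mathcal{A}$.

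In summary, the proof is the single sequence of implications
\[
F: D^b(X) \simeq D^b(Y) \;\rightsquigarrow\; X \iso Y \text{ in } \KM(k)_R \;\xrightarrow{\omega_{k,R}}\; X \iso Y \text{ in } C_{k,R} \;\xrightarrow{\sF}\; \sF(X) \iso \sF(Y) \text{ in } \mathcal{A}.
\]
There is no real obstacle here beyond bookkeeping: the nontrivial inputs (Orlov's kernel theorem, the existence of $\omega_{k,R}$ via Grothendieck--Riemann--Roch with rational coefficients, and the functor $\sF: C_{k,R} \to \mathcal{A}$ built from the Manin invariant) are all already in place, and the theorem is merely their composition. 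The only point requiring a moment's care is the verification that the convolution of kernels corresponds under the pushforward/pullback definition of composition in $\KM(k)_R$ to the composition of Fourier--Mukai transforms, so that invertibility of $F$ really does give invertibility of $[P]$; this is standard and can be cited rather than re-proved.
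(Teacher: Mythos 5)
Your proposal is correct and follows essentially the same route as the paper: a derived equivalence yields, via Orlov's kernel, an isomorphism $X_R \simeq Y_R$ in $\KM(k)_R$ (the paper cites its companion result [Lemma 3.2, M1] for this step, which you instead sketch directly through convolution of kernels), and the conclusion follows by applying the composite $\sF \circ \omega_{k,R}$. No substantive difference.
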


\begin{proof}
    For a derived equivalence of smooth projective varieties $D^b(X)\simeq D^b(Y)$, by \cite[Lemma 3.2]{M1} we have an isomorphism $X_R \simeq Y_R$ in $\KM(k)_R$. Thus we obtain an isomorphism $F(X)=\sF\circ \omega_{k,R}(X) \simeq \sF\circ \omega_{k,R}(Y) =F(Y)$ in $\mathcal{A}$.
\end{proof}
To continue this story when $R$ is not a $\Q$-algebra we have studied integral Grothendieck Riemann-Roch theory \cite[Theorem 1.1]{M1}. Consider the full subcategory $\Sm\Proj^{\leq d}(k)$ of $\Sm\Proj(S)$ such that dimension of objects are less than or equal to $d$, and the full subcategory $\Sm\Proj^{\leq d}_{(e)}(k)$ of $\Sm\Proj^{\leq d}(S)$ whose objects can be embedded in $\PP^e_k$. Now we define $\KM^{\leq d}(k)$ (resp. $\KM^{\leq d}_{(e)}(k)$) as the smallest full subcategory of $\KM(k)$ which contains the image of the functor $\Sm\Proj^{\leq d}(k) \to \KM(k)$ (resp. $\Sm\Proj^{\leq d}_{(e)}(k) \to \KM(k)$) and is closed under finite coproducts. We recall the comparison between non-commutative motives and Chow motives with integral coefficients which is proved in \cite[Theorem 3.1]{M1}. For natural numbers $d$ and $e$, if $\ch(k)=0$ let $R$ be a $\Z[\frac{1}{(3d+1)!}]$-algebra (resp. if $\ch(k)=p$ let $R$ be a $\Z[\frac{1}{(2d+e+1)!}]$-algebra), we have constructed an $R$-linear functor (see \cite[Corollary A.3]{M1} (resp. \cite[Theorem 3.1]{M1}))
 \begin{equation}\label{intkont}
     \Phi_{R}: \KM^{\leq d}(k)_R \to \Chow(k)_R/-\otimes T_R ~\text{  (resp. }  \Phi_{R}: \KM^{\leq d}_{(e)}(k)_R \to \Chow(k)_R/-\otimes T_R).
 \end{equation}
 Via this functor \eqref{intkont}, the fully faithful functor $\theta: C_{k,R} \to  \Chow(k)_R/-\otimes T_R$ induces the following $R$-linear functor
\begin{eqnarray}\label{ONIONI}
\omega_{k,R}:\KM^{\leq d}(k)_R \to {C_{k,R}^{\leq d}} ~\text{  (resp. } \omega_{k,R}:\KM_{(e)}^{\leq d}(k)_R &\to& {C_{k,R}^{\leq d}}) 
\end{eqnarray}
For a derived equivalent smooth projective varieties $F:D^b(X) \overset{\simeq}{\to} D^b(Y)$ over $k$, there is an isomorphism $X \overset{P}{\simeq } Y$ in $\KM(k)_R$. Given a $d$-Manin invariant $F=\bigoplus F^\bullet:\Sm\Proj(k) \to \mathcal{A}$ with $R$-coefficients, then there is a functor $\sF:C_{k,R}^{\leq d} \to \mathcal{A}$. Thus we have a sequence of functors
\[
\KM^{\leq d}(k)_R \overset{\omega_{k,R}}{\to} C_{k,R}^{\leq d} \overset{\sF}{\to} \mathcal{A} ~\text{  (resp. } \KM^{\leq d}_{(e)}(k)_R \overset{\omega_{k,R}}{\to} C_{k,R}^{\leq d} \overset{\sF}{\to} \mathcal{A}).
\]
Thus we obtain the following result.

\begin{thm}\label{ch0Manin}
If $\text{char}(k)=0$, for a $\Z[\frac{1}{(3d+1)!}]$-algebra $R$, any $d$-Manin invariant with $R$-coefficient $\sF$ is a derived invariant for smooth projective varieties whose dimension are less than $d$.
\end{thm}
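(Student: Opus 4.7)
The plan is to chain together three ingredients that are essentially already assembled in the preceding discussion: the Fourier--Mukai description of a derived equivalence, the integral comparison functor from non-commutative motives to Manin's category of motives established in \cite[Corollary~A.3]{M1}, and the tautological factorization of a $d$-Manin invariant through $C_{k,R}^{\leq d}$. Concretely, given $X,Y\in \Sm\Proj^{\leq d}(k)$ and a $k$-linear derived equivalence $F:D^b(X)\simeq D^b(Y)$, I first invoke Orlov's representability theorem \cite{Orlov03} to realize $F$ as a Fourier--Mukai transform with kernel $P\in D^b(X\times Y)$. Taking the class $[P]\in K_0(X\times Y)_R$ produces a morphism $X\to Y$ in $\KM(k)_R$ which, by \cite[Lemma~3.2]{M1}, is in fact an isomorphism; since both $X$ and $Y$ have dimension at most $d$, this isomorphism already lives in $\KM^{\leq d}(k)_R$.

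Next, I apply the $R$-linear functor $\omega_{k,R}:\KM^{\leq d}(k)_R \to C_{k,R}^{\leq d}$ of \eqref{ONIONI} to transport the isomorphism $X\simeq Y$ in $\KM^{\leq d}(k)_R$ to an isomorphism $\omega_{k,R}(X)\simeq \omega_{k,R}(Y)$ in $C_{k,R}^{\leq d}$. This is the step that genuinely uses the hypothesis $\car(k)=0$ and the coefficient ring $R$ being a $\Z[\tfrac{1}{(3d+1)!}]$-algebra: both are precisely what is needed in \cite[Corollary~A.3]{M1} to construct the integral refinement $\Phi_R$ of \eqref{intkont}, so that composing with the fully faithful embedding $\theta:C_{k,R}^{\leq d}\hookrightarrow \Chow(k)_R/{-\otimes T_R}$ factors through $C_{k,R}^{\leq d}$. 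Finally, the definition of a $d$-Manin invariant is engineered exactly so that $\sF=\bigoplus_i \sF^i$ factors as an $R$-linear functor $\widetilde{\sF}:C_{k,R}^{\leq d}\to \mathcal{A}$; applying $\widetilde{\sF}$ to the isomorphism $\omega_{k,R}(X)\simeq \omega_{k,R}(Y)$ yields the desired isomorphism $\sF(X)\simeq \sF(Y)$ in $\mathcal{A}$, respecting the grading because $\theta$ respects the grading.

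The main obstacle is entirely hidden inside the input from \cite{M1}: producing the integral Grothendieck--Riemann--Roch functor \eqref{intkont} requires controlling the denominators appearing in the Todd class contributions to the Chern character after inverting $(3d+1)!$, and verifying that the resulting functor descends compatibly along the orbit projection $\pi:\Chow(k)_R\to \Chow(k)_R/{-\otimes T_R}$ in a way that is compatible with $\theta$. Granting that construction, the proof is the formal composition $\KM^{\leq d}(k)_R \xrightarrow{\omega_{k,R}} C_{k,R}^{\leq d} \xrightarrow{\widetilde{\sF}} \mathcal{A}$ applied to the $\KM$-isomorphism coming from the Fourier--Mukai kernel.
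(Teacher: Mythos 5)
Your proposal is correct and follows essentially the same route as the paper: the paper's proof of Theorem \ref{ch0Manin} is exactly the composition $\KM^{\leq d}(k)_R \xrightarrow{\omega_{k,R}} C_{k,R}^{\leq d} \xrightarrow{\sF} \mathcal{A}$ applied to the isomorphism $X_R \simeq Y_R$ in $\KM(k)_R$ coming from the Fourier--Mukai kernel via \cite[Lemma~3.2]{M1}, with all the genuine content delegated to the integral functor $\Phi_R$ of \cite[Corollary~A.3]{M1} just as you describe.
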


\begin{thm}\label{chpManin}
If $\text{char}(k)=p>0$, for a $\Z[\frac{1}{(2d+e+1)!}]$-algebra $R$, any $d$-Manin invariant with $R$-coefficient $\sF$ is a derived invariant for smooth projective varieties whose dimension are less than $d$ and which can be embedded into $\PP^e_k$. 
\end{thm}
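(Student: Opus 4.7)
The plan is to mimic verbatim the argument just given for Theorem~\ref{ch0Manin}, replacing the characteristic~$0$ integral Grothendieck--Riemann--Roch result with its positive characteristic counterpart, the second half of \eqref{intkont}--\eqref{ONIONI}. Everything except the construction of $\omega_{k,R}$ in this range of coefficients has already been assembled in the preceding paragraphs, so only the bookkeeping of assumptions must be matched to the hypotheses.

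Concretely, let $X,Y\in\Sm\Proj^{\leq d}_{(e)}(k)$ and suppose $F:D^b(X)\iso D^b(Y)$ is a $k$-linear equivalence. First I would invoke Orlov's representability theorem \cite{Orlov03} to produce a kernel $P\in D^b(X\times Y)$ with $F\simeq \Phi_P$, and then apply \cite[Lemma 3.2]{M1} to conclude that the class $[P]\in K_0(X\times Y)_R$ defines an isomorphism $X\iso Y$ in $\KM(k)_R$, hence in $\KM^{\leq d}_{(e)}(k)_R$ since $X,Y\in\Sm\Proj^{\leq d}_{(e)}(k)$.

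Second, since we assume $\car(k)=p>0$ and $R$ is a $\Z[\tfrac{1}{(2d+e+1)!}]$-algebra, the integral Grothendieck--Riemann--Roch functor from \cite[Theorem~3.1]{M1}, recorded in \eqref{ONIONI} as
\[
\omega_{k,R}:\KM_{(e)}^{\leq d}(k)_R\to C_{k,R}^{\leq d},
\]
is defined, and applying it to the isomorphism $X\iso Y$ in $\KM^{\leq d}_{(e)}(k)_R$ yields an isomorphism $X\iso Y$ in $C_{k,R}^{\leq d}$.

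Third, by definition a $d$-Manin invariant $\sF=\bigoplus\sF^\bullet$ with $R$-coefficients factors through a functor $\sF:C_{k,R}^{\leq d}\to\mathcal{A}$, so composing with the previous step produces the desired isomorphism $\sF(X)\simeq\sF(Y)$ in $\mathcal{A}$. There is essentially no obstacle beyond checking that the hypotheses $\dim X,\dim Y<d$, $X,Y\hookrightarrow\PP^e_k$ and the denominator condition on $R$ are precisely those required to apply \eqref{ONIONI}; the main technical content of the theorem is hidden in \cite[Theorem~3.1]{M1}, which is assumed.
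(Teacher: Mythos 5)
Your proposal is correct and follows essentially the same route as the paper: the paper's proof of Theorem~\ref{chpManin} is exactly the composite $\KM^{\leq d}_{(e)}(k)_R \overset{\omega_{k,R}}{\to} C_{k,R}^{\leq d} \overset{\sF}{\to} \mathcal{A}$ built from \eqref{intkont}--\eqref{ONIONI}, applied to the isomorphism $X\simeq Y$ in $\KM(k)_R$ supplied by Orlov's kernel and \cite[Lemma 3.2]{M1}. Nothing is missing; the only content beyond bookkeeping is, as you note, delegated to \cite[Theorem 3.1]{M1}.
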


\section{The application to mirror symmetry}
This section assumes the base field $k$ is of characteristic $0$. For an embedding of fields $\sigma:k \hookrightarrow \mathbb{C}$, there is the Betti realization with $R$-coefficients of Chow motives:
\begin{eqnarray*}\label{BABY1}
    \Gamma^{R}_{\text{sing}}:\DM^{\eff}(k,R) \overset{\sigma^*}{\to} \DM^{\eff}(\mathbb{C},R) \overset{\text{Ro}_{\text{tr}}}{\simeq} \DA^{\eff,\et}(\mathbb{C},R) \overset{\text{Bti}(\mathbb{C})}{\to} D(R\text{-mod}),
\end{eqnarray*}
where $\sigma^*$ is the natural base change functor along $\sigma$, and $\text{Bti}(\mathbb{C})$ is the Betti realization of motives constructed in \cite[Définition 1.7]{Ayoubbetti} and the equivalence $\text{Ro}_{\text{tr}}$ is proved by Ayoub \cite[Theorem 4.4]{AyoubICM}. For a smooth projective variety $X$ over $k$, the functor $\Gamma_{\text{sing}}^R$ induces an isomorphism of $R$-modules
\[
H^i\bigl(\Gamma^{R}_{\text{sing}}(M(X)_{R}) \bigr) \simeq H^i_{\text{sing}}(X_{\sigma}^{\text{an}},R),
\]
and for any positive integer $j\in\Z_{\geq 0}$ there is an isomorphism of $R$-modules
\begin{equation}\label{booboo}
H^i\bigl(\Gamma^{R}_{\text{sing}}(M(X)_{R}(j)[2j]) \bigr) \simeq H^{i-2j}_{\text{sing}}(X_{\sigma}^{\text{an}},R).
\end{equation}
Let us denote $R_{\text{Bti}}\in \DM^\eff(k,R)$ the object which represents $\Gamma^R_{\text{sing}}$. We know $\{2n\}_{n}$ is a cohomological dimension of $R_{\text{Bti}}$. Choose a natural number $d$. For a natural number $m>6d$, we set 
\[
R_{\text{Bti},m}^j(X) = \left\{
\begin{array}{ll}
\Hom_{\DM^\eff}(M(X)(j)[2j],R_{\text{Bti}}[m])\simeq H^{m-2j}_{\text{sing}}(X_{\sigma}^{\text{an}},R) & j>2d \\
~~~0 & j\leq 2d
\end{array}
\right.
\]
If $\dim X \leq d$, for $m >6d$, then we have an isomorphism
\[
R_{\text{Bti},m}(X) = \bigoplus_{j>2d} R_{\text{Bti},m}^j(X) \simeq \bigoplus_{j>2d} H^{m-2j}_{\text{sing}}(X_{\sigma}^{\text{an}},R) = \left\{
\begin{array}{ll}
\bigoplus_{i:\text{even}} H^{i}_{\text{sing}}(X_{\sigma}^{\text{an}},R)  & \text{if }m \text{:even} \\
\bigoplus_{i:\text{odd}} H^{i}_{\text{sing}}(X_{\sigma}^{\text{an}},R) & \text{if }m \text{:odd}
\end{array}
\right.
\]
since $H^j(X_\sigma^\text{an},R)=0$ for any $j>2d$, and by Proposition~\ref{1-1} we know $R_{\text{Bti},m}$ is a $d$-Manin invariant. If $R$ is a $\Z[\frac{1}{(3d+1)!}]$-algebra then by the sequence of functors \eqref{ONIONI} there is a functor 
\begin{eqnarray}\label{redrock}
R_{\text{Bti},m}:\KM^{\leq d}(k)_R \to (R\text{-mod})
\end{eqnarray}
\begin{thm}\label{GOHANCHAN}
We fix an embedding of fields $\sigma:k \hookrightarrow \mathbb{C}$. Let $X$ and $Y$ be smooth projective varieties over $k$. We denote $d=\dim Y$. We assume that there is a $k$-linear fully faithful triangulated functor $F:D^b(X) \to D^b(Y)$. Then the following holds.
\begin{itemize}
 \item[(1)] There are split injective morphism of $\Z[\frac{1}{(3d+1)!}]$-module 
    \[
    \Phi^{even}_{sing}(F_{\sigma}):H_{\text{sing}}^{\text{even}}(X_{\sigma},\Z[\frac{1}{(3d+1)!}]) \hookrightarrow H_{\text{sing}}^{\text{even}}(Y_{\sigma},\Z[\frac{1}{(3d+1)!}]),
    \]
    \[
    \Phi^{odd}_{sing}(F_{\sigma}):H_{\text{sing}}^{\text{odd}}(X_{\sigma},\Z[\frac{1}{(3d+1)!}]) \hookrightarrow H_{\text{sing}}^{\text{odd}}(Y_{\sigma},\Z[\frac{1}{(3d+1)!}]).
    \]
    \item[(2)] If $F$ is an equivalence then $\Phi^{even}_{sing}(F_{\sigma})$ and $\Phi^{odd}_{sing}(F_{\sigma})$ are isomorphisms.
    \end{itemize}
\end{thm}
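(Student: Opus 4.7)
The plan is to factor the assertion through the integral Grothendieck--Riemann--Roch comparison \eqref{intkont} and then apply the Manin invariant $R_{\text{Bti},m}$ of \eqref{redrock}. Set $R = \Z[\frac{1}{(3d+1)!}]$. First I would invoke Orlov's representability theorem to obtain a Fourier--Mukai kernel $P \in D^b(X \times Y)$ representing $F$. Since $F$ is fully faithful, $D^b(X)$ sits as a semiorthogonal summand of $D^b(Y)$, which forces $\dim X \leq \dim Y = d$; hence both $X$ and $Y$ belong to $\Sm\Proj^{\leq d}(k)$, and the class $[P] \in K_0(X \times Y)_R$ defines a morphism $X \to Y$ in $\KM^{\leq d}(k)_R$.

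Next I would extend \cite[Lemma 3.2]{M1} from equivalences to fully faithful functors. The left adjoint $F^{L}$ is itself of Fourier--Mukai type, represented by an adjoint kernel $Q$, and the counit $F^{L} \circ F \Rightarrow \id$ translates, via convolution of kernels, into the equality $[Q] \circ [P] = [\mathcal{O}_{\Delta_X}]$ in $K_0(X \times X)_R$. Consequently $[P]\colon X \to Y$ is a split monomorphism in $\KM^{\leq d}(k)_R$, and an isomorphism when $F$ is an equivalence.

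Then I would choose an even integer $m_0 > 6d$ and an odd integer $m_1 > 6d$, and apply the $R$-linear functor \eqref{redrock}, namely $R_{\text{Bti},m_i}\colon \KM^{\leq d}(k)_R \to (R\text{-mod})$, to $[P]$. Since additive functors preserve split monomorphisms and isomorphisms, this yields split injections (resp. isomorphisms)
\[
R_{\text{Bti},m_0}(X) \hookrightarrow R_{\text{Bti},m_0}(Y) \quad\text{and}\quad R_{\text{Bti},m_1}(X) \hookrightarrow R_{\text{Bti},m_1}(Y).
\]
By the computation recorded just above \eqref{redrock}, these identify, for $m_0$ even, with the induced map on $\bigoplus_{i \text{ even}} H^{i}_{\text{sing}}(-_{\sigma}, R)$, and for $m_1$ odd, with the induced map on $\bigoplus_{i \text{ odd}} H^{i}_{\text{sing}}(-_{\sigma}, R)$, giving both (1) and (2).

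The main obstacle will be the passage from fully faithfulness of $F$ to splittability of $[P]$ in $\KM^{\leq d}(k)_R$ over the denominator-bounded ring $\Z[\frac{1}{(3d+1)!}]$; concretely, one must check that the integral comparison \eqref{intkont} is compatible with the kernels representing adjoints, so that the denominators introduced when inverting the Chern character remain bounded by $(3d+1)!$, which is precisely the content of the integral Grothendieck--Riemann--Roch statement of \cite{M1}.
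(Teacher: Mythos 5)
Your proposal is correct and follows essentially the same route as the paper: establish that $\dim X \le d$, obtain a split monomorphism (resp.\ isomorphism) $X_R \hookrightarrow Y_R$ in $\KM^{\leq d}(k)_R$ from the Fourier--Mukai kernel --- which the paper simply cites as \cite[Lemma 3.2]{M1} rather than re-deriving via the adjoint kernel as you sketch --- and then apply the additive functor $R_{\text{Bti},m}$ of \eqref{redrock} for $m>6d$ of each parity to identify the images with $H^{\text{even}}_{\text{sing}}$ and $H^{\text{odd}}_{\text{sing}}$. The extra detail you supply on splitting $[P]$ via the counit is a correct unpacking of the cited lemma, not a different argument.
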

\begin{proof}
We denote $R=\Z[\frac{1}{(3d+1)!}]$. By the assumption, we know $\dim X\leq d$. Thus $X_R,Y_R \in \KM^{\leq d}(k)_R$ for some $e\in \N$. For a fully faithful functor (resp. equivalent functor) $F:D^b(X)\to D^b(Y)$, there is a split injective map (resp. isomorphism) 
\[
X_R \hookrightarrow Y_R \text{  (resp. }X_R\simeq Y_R )
\]
in $\KM^{\leq d}(k)_R$ (see \cite[Lemma 3.2]{M1}). The claim follows from the functor \eqref{redrock}.
\end{proof}
\begin{thm}
Let $X$ and $Y$ be smooth projective surfaces over $\mathbb{C}$. We assume that there is a $\mathbb{C}$-linear equivalence $F:D^b(X) \simeq D^b(Y)$. For a natural number $m$ such that $(m,p)=1$ for $p=2,3,5,7$ then there are isomorphisms of the $m$-torsion parts of cohomology groups
\[
H^i(X,\Z)[m] \simeq H^i(Y,\Z)[m]
\]
and equalities
\[
H^i(X,\Z) \simeq H^i(Y,\Z)[m]
\]
for any $i$. In particular, there is an isomorphism of the $m$-torsion parts of the abelianization of the fundamental groups
\[
\pi_1^{\text{ab}}(X)[m]\simeq \pi_1^{\text{ab}}(Y)[m].
\]
\end{thm}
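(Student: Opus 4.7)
The plan is to specialize Theorem~\ref{GOHANCHAN} to $d=\dim Y=2$. For this choice $(3d+1)!=7!$, so $R:=\Z[1/7!]=\Z[1/2,1/3,1/5,1/7]$, and since $F$ is an equivalence, Theorem~\ref{GOHANCHAN}(2) yields isomorphisms
\[
\Phi^{\text{even}}_{\text{sing}}(F):\bigoplus_{i\text{ even}} H^i_{\text{sing}}(X,R) \xrightarrow{\sim} \bigoplus_{i\text{ even}} H^i_{\text{sing}}(Y,R),
\]
and the analogous statement in odd degree.

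Next I would transfer this to integral $m$-torsion. Since $R$ is flat over $\Z$, the universal coefficient theorem identifies $H^i(X,R)=H^i(X,\Z)\otimes_{\Z} R$. The hypothesis $(m,p)=1$ for $p\in\{2,3,5,7\}$ says that all primes inverted in $R$ act invertibly on any $m$-torsion group, so $H^i(X,\Z)[m]$ is canonically an $R$-module, and splitting a finitely generated abelian group into free and torsion parts gives the identification $H^i(X,R)[m]=H^i(X,\Z)[m]$. Taking $m$-torsion of the displayed isomorphism therefore yields an isomorphism of $\bigoplus_{i\text{ even}} H^i(X,\Z)[m]$ with $\bigoplus_{i\text{ even}} H^i(Y,\Z)[m]$, and similarly for odd $i$.

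The last step is to split these total isomorphisms into single graded pieces using the particular structure of the cohomology of a smooth projective surface. By universal coefficients and Poincar\'e duality, $H^0(X,\Z)=\Z=H^4(X,\Z)$ and $H^1(X,\Z)=\Hom(H_1(X,\Z),\Z)$ is torsion-free; the torsion therefore lives only in $H^2$ (for the even part) and in $H^3$ (for the odd part). Consequently, applying $(-)[m]$ makes the degrees $0,1,4$ contribute zero on both sides, so the even-degree isomorphism reduces to $H^2(X,\Z)[m]\simeq H^2(Y,\Z)[m]$ and the odd one to $H^3(X,\Z)[m]\simeq H^3(Y,\Z)[m]$; for $i\in\{0,1,4\}$ both sides are trivially zero. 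For the final assertion about $\pi_1^{\text{ab}}$, I would combine the Hurewicz isomorphism $\pi_1^{\text{ab}}(X)\cong H_1(X,\Z)$ with the universal coefficient identification $H^2(X,\Z)_{\text{tors}}\cong H_1(X,\Z)_{\text{tors}}$, which gives $H^2(X,\Z)[m]=\pi_1^{\text{ab}}(X)[m]$. The only nontrivial point in the entire argument is this ``collapse'' of the total even/odd isomorphism to a single graded piece; for surfaces it is automatic because the torsion sits in a single degree of each parity, whereas in higher dimensions one would need additional input (such as the weight splitting predicted by Remark/Conjecture~\ref{reco}) to separate the graded pieces, so I do not expect a serious obstacle in the present case.
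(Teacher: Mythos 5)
Your proposal is correct and follows essentially the same route as the paper: apply Theorem~\ref{GOHANCHAN} with $R=\Z[1/7!]$, take $m$-torsion, and use the fact that for a smooth projective surface the torsion of $H^*(-,\Z)$ is concentrated in degrees $2$ and $3$ (with $H^0$, $H^1$, $H^4$ torsion-free) so that the total even/odd isomorphisms collapse to the individual graded pieces, the $\pi_1^{\mathrm{ab}}$ statement then following from universal coefficients. Your extra care in identifying $H^i(X,R)[m]$ with $H^i(X,\Z)[m]$ via flatness and coprimality of $m$ to the inverted primes is a detail the paper leaves implicit, but the argument is the same.
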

\begin{proof}
For simplicity  of  notation we write $R$ instead of $\Z[\frac{1}{7!}]$. For a natural number $m$ such that $(m,p)=1$ for $p=2,3,5,7$, by Theorem~\ref{GOHANCHAN} there is an isomorphism of $m$-torsion groups
\[
H_{\text{sing}}^{+}(X_{\sigma},R)[m] \simeq H_{\text{sing}}^{+}(Y_{\sigma},R)[m]\]
for $+=\text{even,odd}$. Since $H^i(X_{\sigma},R)$ and $H^i(Y_{\sigma},R)$ are torsion free groups for $i=0,1,4$, and vanish for $i<0, i > 4$, this produces the isomorphisms claimed in the statement. The second statement follows from the universal coefficients theorem.
\end{proof}

\begin{cor}\label{bounding}
Let $G$ be a finite group, and $X$ be a complex projective surface with faithful $G$-action $G \curvearrowright X$. If the quotient $X/G$ is also a complex projective manifold and the canonical map $X \to X/G$ induces an equivalence of bounded derived categories, then the vanishing
\[
G^{\text{ab}}[p]=0
\]
holds for any prime $p>7$.
\end{cor}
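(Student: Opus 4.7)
The plan is to combine the surface derived-invariance theorem stated just above with the étale-cover structure coming from the $G$-action. Set $Y := X/G$. The hypothesis gives a $\mathbb{C}$-linear equivalence $D^b(X) \simeq D^b(Y)$, and since both $X$ and $Y$ are complex projective surfaces, the preceding theorem (with $d = 2$, whence $3d+1 = 7$) yields
\[
\pi_1^{\text{ab}}(X)[m] \simeq \pi_1^{\text{ab}}(Y)[m]
\]
for every natural number $m$ coprime to $7! = 5040$. Specialising to $m = p^k$ for arbitrary $k$ and letting $k \to \infty$ identifies the whole $p$-primary torsion subgroup $\pi_1^{\text{ab}}(X)_{(p)} \simeq \pi_1^{\text{ab}}(Y)_{(p)}$ for every prime $p > 7$.

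Next I would use that the faithful $G$-action on smooth $X$ with smooth quotient $Y$, combined with the derived-equivalence hypothesis, forces $\pi \colon X \to Y$ to be an étale Galois cover with group $G$. This produces the short exact sequence of topological fundamental groups
\[
1 \to \pi_1(X) \to \pi_1(Y) \to G \to 1,
\]
and abelianising and tensoring with $\F_p$ for a prime $p > 7$ yields
\[
\pi_1^{\text{ab}}(X) \otimes \F_p \xrightarrow{\bar{\pi}_{*}} \pi_1^{\text{ab}}(Y) \otimes \F_p \twoheadrightarrow G^{\text{ab}} \otimes \F_p \to 0.
\]
For the finite abelian group $G^{\text{ab}}$ we have $|G^{\text{ab}}[p]| = |G^{\text{ab}} \otimes \F_p|$, so the desired vanishing $G^{\text{ab}}[p] = 0$ is equivalent to the surjectivity of $\bar{\pi}_{*}$.

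Finally, I would close the argument by a dimension count. Since derived equivalence preserves Hodge numbers for surfaces in characteristic zero, $b_1(X) = b_1(Y)$, and combined with the $p$-primary torsion isomorphism above, both sides of $\bar{\pi}_{*}$ have equal $\F_p$-dimensions for every $p > 7$; hence surjectivity is equivalent to injectivity. The main obstacle is to rule out a contribution to the kernel from the Schur multiplier $H_2(G, \Z)$, which enters via the five-term exact sequence of group homology for the above extension. To do so, one would exploit the étale-cover multiplicativity $\chi(\mathcal{O}_X) = |G|\chi(\mathcal{O}_Y)$ and its derived-invariant counterpart $\chi(\mathcal{O}_X) = \chi(\mathcal{O}_Y)$ (and similarly for $\chi_{\text{top}}$) to force $\chi(\mathcal{O}_Y) = \chi_{\text{top}}(Y) = 0$; by the Enriques--Kodaira classification $Y$ is then abelian, bielliptic, ruled over an elliptic curve, or properly elliptic, and in each of these cases $\pi_1^{\text{ab}}(Y)$ has no $p$-torsion for $p > 7$. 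The Cartan--Leray embedding $\widehat{G^{\text{ab}}} \hookrightarrow H^2(Y, \Z)_{\text{tors}} \simeq \pi_1^{\text{ab}}(Y)_{\text{tors}}$ then delivers $G^{\text{ab}}[p] = 0$.
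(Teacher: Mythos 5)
Your opening is fine and matches the only route the paper itself offers: apply the surface theorem with $d=2$ to get $\pi_1^{\text{ab}}(X)[p^k]\simeq \pi_1^{\text{ab}}(Y)[p^k]$ for $p>7$, and reduce the claim, via the five-term sequence of $1\to\pi_1(X)\to\pi_1(Y)\to G\to 1$, to the surjectivity of $\pi_*\colon \pi_1^{\text{ab}}(X)\otimes \Z/p\to \pi_1^{\text{ab}}(Y)\otimes\Z/p$. (Already here you assert, without argument, that the cover is \'etale; a faithful action with smooth quotient need not be free, and nothing you cite rules out ramification.) The decisive gap is the last step. An abstract isomorphism of the two groups --- even combined with $b_1(X)=b_1(Y)$ --- says nothing about the cokernel of the \emph{particular} map $\pi_*$: a homomorphism between isomorphic finitely generated abelian groups can have arbitrary finite cokernel (multiplication by $p$ on $\Z$). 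Your two devices for closing this do not work. The ``Cartan--Leray embedding'' $\widehat{G^{\text{ab}}}\hookrightarrow H^2(Y,\Z)_{\text{tors}}$ is not injective in general: in the spectral sequence $H^i(G,H^j(X,\Z))\Rightarrow H^{i+j}(Y,\Z)$ the differential $d_2\colon H^1(X,\Z)^G\to H^2(G,\Z)$ can kill part or all of $H^2(G,\Z)\cong\widehat{G^{\text{ab}}}$. And the classification claim is false for properly elliptic quasi-bundles, whose orbifold base can contribute $p$-torsion to $\pi_1^{\text{ab}}(Y)$ for every prime $p$; for abelian or ruled $Y$ the torsion-freeness of $\pi_1^{\text{ab}}(Y)$ is simply irrelevant to the vanishing of $\Coker(\pi_*)$.

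A concrete example shows the approach cannot be completed with the tools you use. Let $B$ be an abelian surface carrying a polarization $\lambda\colon B\to \hat B$ of type $(1,p)$, so that $K=\Ker(\lambda)\cong(\Z/p)^2$ acts freely on $B$ by translations with $B/K\cong\hat B$, and $D^b(B)\simeq D^b(\hat B)$ by Mukai's equivalence. Every constraint you extract is satisfied: $b_1=4$ on both sides, all torsion in $\pi_1^{\text{ab}}$ and $H^*(-,\Z)$ vanishes, $\chi(\sO)=\chi_{\text{top}}=0$, and $Y$ is abelian --- yet $G^{\text{ab}}[p]=(\Z/p)^2\neq 0$. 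Hence any correct proof must exploit the hypothesis that the equivalence is the one \emph{induced by the quotient map itself} (or an extra assumption such as $X$ simply connected, as in Example~\ref{example}, where $H^2(Y,\Z)_{\text{tors}}\cong\widehat{G^{\text{ab}}}$ does hold and the theorem applies directly); your argument, like the theorem it invokes, only uses the existence of some Fourier--Mukai equivalence and therefore cannot distinguish the corollary's situation from the example above.
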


\begin{conj}\label{conj1}
Let $G$ be a finite group, and $X$ be a complex projective $d$-fold with faithful $G$-action $G \curvearrowright X$. If the quotient $X/G$ is also a complex projective manifold and the canonical map $X \to X/G$ induces an equivalence of bounded derived categories, then the vanishing
\[
G^{\text{ab}}[p]=0
\]
holds for any prime $p>3d+1$.
\end{conj}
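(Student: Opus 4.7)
The plan is to upgrade the surface-level argument behind Corollary~\ref{bounding} to arbitrary $d$ by invoking Remark/Conjecture~\ref{reco} in the singular-cohomology case, then extract $G^{\mathrm{ab}}[p]$ via the Cartan-Leray spectral sequence associated to $\pi\colon X\to X/G$.

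First, taking $R=\Z[\tfrac{1}{(3d+1)!}]$ and applying Remark/Conjecture~\ref{reco} to the derived equivalence $D^b(X)\simeq D^b(X/G)$, each graded piece $H^i(-,R)$ of singular cohomology is a derived invariant. Since no prime $p>3d+1$ is inverted in $R$, universal coefficients produces
\[
H^i(X,\Z/p)\;\simeq\;H^i(X/G,\Z/p)\qquad\text{for every }i\text{ and every }p>3d+1.
\]

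Next, restricting to the free locus (or passing to $[X/G]$ when $G$ has fixed points, noting that the coarse-moduli comparison is a $\Z/p$-cohomology isomorphism whenever $p\nmid|G|$, and that the conclusion $G^{\mathrm{ab}}[p]=0$ is trivial when $p\nmid|G|$), I would invoke the Cartan-Leray five-term sequence
\[
0\to H^1(G,\Z/p)\to H^1(X/G,\Z/p)\xrightarrow{\pi^*}H^1(X,\Z/p)^G\to H^2(G,\Z/p)\to H^2(X/G,\Z/p).
\]
By Hurewicz and universal coefficients, $G^{\mathrm{ab}}[p]=0$ is equivalent to $\pi^*$ being injective with $\Z/p$-coefficients, i.e.\ to $H^1(G,\Z/p)=\Hom(G^{\mathrm{ab}},\Z/p)=0$. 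Since the derived equivalence $D^b(X)\simeq D^b(X/G)$ is by hypothesis induced by the canonical map~$\pi$, the isomorphism obtained in Step~1 should coincide up to a unit in $\Z/p$ with $\pi^*$, forcing $\pi^*$ to be injective and thereby $H^1(G,\Z/p)=0$.

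The main obstacle is Remark/Conjecture~\ref{reco} itself, which is open: unconditionally one has only the joint invariance of $H^{\mathrm{even}}$ and $H^{\mathrm{odd}}$, and for $d\geq 3$ the $p$-torsion of $H^2$ cannot be isolated from that of $H^4,H^6,\ldots$ inside $H^{\mathrm{even}}$. A secondary difficulty is making the identification of the conjectural isomorphism with $\pi^*$ rigorous: one has to trace the Chern character of the Fourier-Mukai kernel $\mathcal{O}_{\Gamma_\pi}$ through the chain of functors~\eqref{ONIONI} from $K$-motives to Chow motives and verify that the image on $H^1$ is indeed $\pi^*$ up to a unit. A partially unconditional alternative would be to use an equivariant Hochschild-Kostant-Rosenberg-type comparison relating the Hodge cohomology of $[X/G]$ to the $G$-invariant Hodge cohomology of $X$, bypassing Remark/Conjecture~\ref{reco} at the cost of a finer analysis of the inertia strata of the $G$-action.
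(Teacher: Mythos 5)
The statement you are addressing is labelled \emph{Conjecture}~\ref{conj1} in the paper: no proof is given there, and the only case the paper actually establishes is $d=2$, $p>7$ (Corollary~\ref{bounding}), which is deduced from Theorem~\ref{GOHANCHAN} together with the purely surface-theoretic fact that the torsion of $H^{\text{even}}_{\text{sing}}$ and $H^{\text{odd}}_{\text{sing}}$ of a surface is concentrated in $H^2$ and $H^3$ respectively. That concentration is exactly what fails for $d\geq 3$, and it is why the general statement is left conditional on Remark/Conjecture~\ref{reco}. Your proposal correctly locates the same obstruction, but since it invokes Remark/Conjecture~\ref{reco} as its first step, it is not a proof; at best it is a reduction of Conjecture~\ref{conj1} to Remark/Conjecture~\ref{reco}, and even that reduction is incomplete for the reasons below.

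There are two concrete gaps beyond the appeal to the open conjecture. First, the Cartan--Leray five-term sequence you write requires the $G$-action to be free (or requires replacing $X/G$ by the homotopy quotient). Your own parenthetical reduction handles only the case $p\nmid|G|$, where the conclusion $G^{\text{ab}}[p]=0$ is vacuous; in the remaining case $p\mid|G|$ with fixed points, the comparison between $H^*(X/G,\Z/p)$ and the equivariant cohomology is not an isomorphism, and ``restricting to the free locus'' destroys both properness and the derived-invariance input, so the exact sequence you need is simply not available. Second, and more seriously, the identification of the isomorphism $H^1(X/G,\Z/p)\simeq H^1(X,\Z/p)$ produced by the motivic machinery with $\pi^*$ is unjustified: the equivalence in the hypothesis is, in the motivating Gross--Popescu/Schnell example, \emph{not} induced by $\pi_*$ or $\pi^*$ (neither is an equivalence once $|G|>1$, since $\pi_*\sO_X$ has rank $|G|$), so the Fourier--Mukai kernel is some a priori unknown object whose associated correspondence has components shifting cohomological degree by arbitrary even amounts in both directions. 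An abstract degreewise isomorphism coming from Remark/Conjecture~\ref{reco} therefore need not have $\pi^*$ as its degree-$1$ component, and without injectivity of $\pi^*$ on $H^1(-,\Z/p)$ the five-term sequence yields nothing about $H^1(G,\Z/p)$. Note that in the one case the paper does prove, the argument is purely numerical (comparing orders of torsion subgroups) and never identifies the isomorphism with a pullback map; any honest proof of Conjecture~\ref{conj1} along your lines would have to either supply that identification (e.g.\ by a triangularity argument for a kernel supported on the graph, which the hypothesis does not provide) or find a similarly map-free counting argument.
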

\begin{example}\label{example}
In \cite{GrossPopescu}, Gross and Popescu constructed a simply-connected complex projective manifold $Y$ which is an abelian surface fibration to $\PP^1$ with exactly 64 sections with a group action $H=(\Z/8\Z)^2 \curvearrowright Y$, and conjectured that the quotient $Y/H$ should be the \textit{mirror of mirror} of $Y$. Homological mirror symmetry would therefore predict that $D^b(Y) \simeq D^b(Y/H)$, and actually the equivalence is proved by Schnell \cite{fundamentalisnotderived}. Since $Y$ is simply-connected, we know isomorphisms $\pi(Y)=0$ and $\pi(Y/H)=H$, by universal coefficients theorem we know $\text{Tors}(H^2(Y,\Z)) \neq \text{Tors}(H^2(Y/H,\Z))$, but for any $(m,2)=1$, we have $\text{Tors}(H^2(Y,\Z))[m] \simeq  \text{Tors}(H^2(Y/H,\Z))[m]=0$.
\end{example}
\section{The application to algebraic geometry of positive characteristic}
\begin{defn}
For a smooth proper variety $X$ over a perfect field $k$ of characteristic $p$, the variety $X$ is said to be $\textit{ordinary}$ if $H^i(X,d\Omega^j_{X/k})=0$ for any $i$ and $j>0$. $X$ is said to be $\textit{Hodge}$-$\textit{Witt}$ if the $W$-module $H^i(X,W\Omega^j_{X/k})$ is finitely generated over $W$ for any $i$ and $j$.
\end{defn}
\begin{prop}(\cite[Theorem~4.1.3]{Joshi})\label{threeJoshi}
For a smooth proper variety $X$, the following are equivalent:
\begin{itemize}
    \item[] \qquad\qquad\qquad\qquad\qquad\qquad\qquad\quad(1) $X$ is ordinary, 
   { \item[] \qquad\qquad\qquad\qquad\qquad\qquad\qquad(2) $X\times X$ is ordinary,}
   { \item[] \qquad\qquad\qquad\qquad\qquad\qquad\qquad(3) $X\times X$ is Hodge-Witt.}
\end{itemize}
\end{prop}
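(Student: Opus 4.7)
\medskip

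\noindent\textbf{Proof proposal.} The plan is to prove $(1)\Rightarrow(2)\Rightarrow(3)\Rightarrow(1)$, relying on the Bloch--Illusie--Nygaard characterization of ordinarity (Newton polygon equals Hodge polygon in every crystalline degree) together with the Illusie--Raynaud--Ekedahl theory of the slope spectral sequence for the de Rham--Witt complex.

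For $(1)\Rightarrow(2)$, the crystalline K\"unneth isomorphism
\[
H^n_{\mathrm{cris}}(X\times X/W)\;\cong\;\bigoplus_{a+b=n}H^a_{\mathrm{cris}}(X/W)\otimes_W H^b_{\mathrm{cris}}(X/W)
\]
is compatible with Frobenius, so the Newton slopes of $H^n_{\mathrm{cris}}(X\times X/W)$ are precisely the sums $s_1+s_2$ of Newton slopes of $H^a_{\mathrm{cris}}(X/W)$ and $H^b_{\mathrm{cris}}(X/W)$ with $a+b=n$. In parallel, the Hodge--K\"unneth formula $h^{p,q}(X\times X)=\sum_{p_1+p_2=p,\,q_1+q_2=q}h^{p_1,q_1}(X)h^{p_2,q_2}(X)$ expresses the Hodge polygon of $X\times X$ as the same ``convolution'' of the Hodge polygon of $X$ with itself. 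Hence if Newton $=$ Hodge for $X$ in every degree, the two convolutions coincide, giving Newton $=$ Hodge for $X\times X$, i.e.\ $X\times X$ is ordinary.

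For $(2)\Rightarrow(3)$, this is the classical Illusie--Raynaud theorem that every ordinary smooth proper variety is Hodge--Witt; apply it to $X\times X$.

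For $(3)\Rightarrow(1)$, I would use the Ekedahl--Illusie--Raynaud characterization of the Hodge--Witt condition via the slope spectral sequence $E_1^{ij}=H^j(-,W\Omega^i)\Rightarrow H^{i+j}_{\mathrm{cris}}(-/W)$: a smooth proper variety is Hodge--Witt iff all the ``domino'' contributions (the $V$-torsion differentials obstructing finite generation of the $E_1$-terms) vanish, equivalently the spectral sequence degenerates at $E_1$ integrally. Combining this with the K\"unneth formula for the de Rham--Witt complex, any non-integer Newton slope $\lambda\in(i,i+1)$ of some $H^n_{\mathrm{cris}}(X/W)$ would contribute, via tensor products, non-integer slopes $\lambda+\mu$ to $H^{n+m}_{\mathrm{cris}}(X\times X/W)$ for every slope $\mu$ of $H^m_{\mathrm{cris}}(X/W)$; these produce non-trivial dominos in the slope spectral sequence of $X\times X$, obstructing Hodge--Wittness of $X\times X$. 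Hence every Newton slope of $X$ must be an integer. A further K\"unneth comparison of multiplicities then promotes this to Newton $=$ Hodge for $X$, i.e., $X$ is ordinary.

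The main obstacle is the implication $(3)\Rightarrow(1)$: executing the ``dominos-from-products'' argument requires careful handling of torsion in $W\Omega^{\bullet}$ and a precise form of the de Rham--Witt K\"unneth formula. A cleaner route may be to first work rationally, after $\otimes_W K$, to force integrality of the Newton slopes of $X$, and only then return to the integral de Rham--Witt picture to upgrade integrality of slopes to the matching Newton $=$ Hodge of $X$ via a multiplicity count on the K\"unneth decomposition.
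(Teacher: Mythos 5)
Your proposal and the paper diverge in strategy: the paper simply cites the three required facts ($(1)\Rightarrow(2)$ is Illusie's theorem that a product of ordinary varieties is ordinary; $(2)\Rightarrow(3)$ is the classical fact that ordinary implies Hodge--Witt; $(3)\Rightarrow(1)$ is Ekedahl's Proposition III.7.2(ii) that $X\times X$ Hodge--Witt forces $X$ ordinary), whereas you attempt to reprove the first and third of these from scratch. Your $(2)\Rightarrow(3)$ is fine and identical to the paper's. The problem is that your reproofs of the other two implications rest on a premise that is false in the generality needed here: you take ``ordinary'' to mean ``Newton polygon $=$ Hodge polygon in every crystalline degree.'' The paper's definition (the Bloch--Kato--Illusie one) is $H^i(X,d\Omega^j)=0$ for all $i$ and $j>0$. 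Ordinarity in this sense implies Newton $=$ Hodge, but the converse requires extra hypotheses (torsion-freeness of $H^*_{\mathrm{cris}}$ and degeneration of Hodge--de Rham at $E_1$). Consequently your $(1)\Rightarrow(2)$ argument, which convolves Newton and Hodge polygons via K\"unneth, only shows Newton $=$ Hodge for $X\times X$; it does not show $H^i(X\times X,d\Omega^j)=0$. The standard proof of Illusie's product theorem instead works directly with the K\"unneth decomposition of the sheaves $B\Omega^\bullet$ and $Z\Omega^\bullet$ (or with the Frobenius action on $H^j(W\Omega^i)$), not with slopes.

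The gap in $(3)\Rightarrow(1)$ is more serious, and you partly acknowledge it. Even granting the domino formalism, your argument is aimed at concluding that all Newton slopes of $X$ are integers and then that Newton $=$ Hodge; by the remark above this still would not give ordinarity in the required sense (and ``integral slopes'' alone is very far from ordinary). Ekedahl's actual theorem is a statement about coherent modules over the Raynaud ring: the K\"unneth formula in the derived category of $R$-modules shows that the domino numbers $T^{i,j}(X\times Y)$ are built from products of Hodge--Witt numbers and domino numbers of the factors, and the vanishing of all dominoes of $X\times X$ forces the non-ordinarity obstruction of $X$ itself to vanish. This is a finer, integral/torsion-level argument that cannot be reduced to Newton-polygon bookkeeping. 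Unless you intend to reproduce Ekedahl's proof in full, the honest course here is the paper's: cite Illusie for $(1)\Rightarrow(2)$ and Ekedahl for $(3)\Rightarrow(1)$.
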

\begin{proof}
As a product of ordinary varieties is ordinary \cite{Illusie}, thus $(1)\Longrightarrow(2)$. It is well known that an ordinary variety is Hodge-Witt variety, thus $(2) \Longrightarrow (3)$. By \cite[Prop~\rm I\hspace{-.15em}I\hspace{-.15em}I 7.2(ii)]{Ekedahl} if $X \times X$ is Hodge-Witt then $X$ is ordinary, thus we obtain $(3) \Longrightarrow (1)$.
\end{proof}

In \cite{Rulling}, for a perfect field $k$ of characteristic $p$ and $k$-scheme $S$, R\"{u}lling-Chatzistamatiou have constructed a $W$-linear functor (see \cite[Theorem~1]{Reciprocity16}):
\begin{eqnarray}\label{RULLING}
C_{S,W} &\to& (W\sO_S\text{-module})\\
f:X\to S &\mapsto & \bigoplus_{i,j\in\Z} R^if_*W\Omega_{X}^j.\nonumber
\end{eqnarray}
In the case $S=\Spec k$, the functor induces the following additive functor (see \cite[Lemma~3.5.5]{Rulling} and \cite[Appendix B]{Reciprocity16})
\begin{eqnarray}\label{monsterhouse}
\Gamma_{HW}^r:C_{k,W}& \to& (W\text{-module})\\
X &\mapsto & \bigoplus_{j-i=r} H^i(X,W\Omega_{X}^j).\nonumber
\end{eqnarray}
\begin{thm}\label{4.3}
Let $X$ and $Y$ be smooth projective varieties over a perfect field $k$ with characteristic $p$. Suppose $X,Y\in \Sm\Proj^{\leq d}_{(e)}(k)$ for some $e$ and $d$, and $p>2d+e+1$. We assume that there is a $k$-linear fully faithful triangulated functor $F:D^b(X) \to D^b(Y)$. Then the following holds
\begin{itemize}

    \item[(1)] For an integer $r$, there is a split injective morphism of $W$-module 
    \[
    \bigoplus_{j-i=r}H^i(X,W\Omega^j_X) \hookrightarrow \bigoplus_{j-i=r}H^i(Y,W\Omega^j_Y).
    \]
    \item[(2)] For an integer $r$, if $F$ is an equivalence, then there is an isomorphism of $W$-module 
    \[
    \bigoplus_{j-i=r}H^i(X,W\Omega^j_X) \simeq \bigoplus_{j-i=r}H^i(Y,W\Omega^j_Y).
    \]
    \item[(3)] If $Y$ is Hodge-Witt, then $X$ is also Hodge-Witt.
    \item[(4)] If $Y$ is ordinary, then $X$ is also ordinary.
\end{itemize}
\end{thm}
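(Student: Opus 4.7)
The plan is to recognize each of (1)--(4) as consequences of the Manin invariant formalism of Section~2 applied to the R\"ulling--Chatzistamatiou realization \eqref{monsterhouse}. Since $p>2d+e+1$, the factorial $(2d+e+1)!$ is a unit in $W$, so the Witt ring $W$ is a $\Z[\tfrac{1}{(2d+e+1)!}]$-algebra. By \eqref{monsterhouse}, for every $r\in\Z$ the assignment $X\mapsto \bigoplus_{j-i=r}H^i(X,W\Omega^j_X)$ factors through $C_{k,W}^{\leq d}$, and is therefore a $d$-Manin invariant with $W$-coefficients; call it $\Gamma_{HW}^r$.

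For (1) and (2), I would proceed as in the proof of Theorem~\ref{GOHANCHAN}. The fully faithful functor $F$ has a Fourier--Mukai kernel $\mathcal{E}\in D^b(X\times Y)$ (Orlov), whose class in $K_0(X\times Y)_W$ defines a morphism $X_W\to Y_W$ in $\KM^{\leq d}_{(e)}(k)_W$ that is split injective by \cite[Lemma~3.2]{M1}, and an isomorphism when $F$ is an equivalence. Composing with the functor $\omega_{k,W}$ from \eqref{ONIONI} and then with $\Gamma_{HW}^r$ produces the split injection (respectively isomorphism) asserted in (1) and (2).

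For (3), if $Y$ is Hodge--Witt, then each $H^i(Y,W\Omega^j_Y)$ is finitely generated over $W$; since only finitely many of them are nonzero with fixed $r=j-i$, the direct sum $\bigoplus_{j-i=r}H^i(Y,W\Omega^j_Y)$ is finitely generated. By (1) the analogous sum for $X$ is a $W$-module direct summand of this, hence finitely generated, and each $H^i(X,W\Omega^j_X)$ is a direct summand of the slice with $r=j-i$, hence finitely generated. Thus $X$ is Hodge--Witt. For (4), I would use the equivalence of (1) and (3) in Proposition~\ref{threeJoshi}: $Y$ ordinary $\iff$ $Y\times Y$ Hodge--Witt. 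The external product $\mathcal{E}\boxtimes\mathcal{E}$ yields a fully faithful functor $D^b(X\times X)\to D^b(Y\times Y)$, and the Segre embedding places $X\times X, Y\times Y$ in $\Sm\Proj^{\leq 2d}_{((e+1)^2-1)}(k)$. Applying (3) to this pair (with the larger dimension and embedding parameters understood) gives that $X\times X$ is Hodge--Witt, and Proposition~\ref{threeJoshi} then yields that $X$ is ordinary.

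The main obstacle is the bookkeeping in (4): the bound on $p$ required to apply (3) to the pair $X\times X, Y\times Y$ is $p>4d+(e+1)^2$, which is strictly stronger than the stated $p>2d+e+1$. I would handle this either by implicitly enlarging the bound on $p$ in the hypothesis of (4), or by noting that the hypothesis $X,Y\in\Sm\Proj^{\leq d}_{(e)}(k)$ with $p>2d+e+1$ must be re-applied to the product embedding to make the reduction legitimate. Everything else is formal once $\Gamma_{HW}^r$ is recognized as a Manin invariant with $W$-coefficients.
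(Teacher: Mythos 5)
Your treatment of parts (1)--(3) is essentially the paper's: the split injection (resp.\ isomorphism) $X_W \hookrightarrow Y_W$ coming from \cite[Lemma 3.2]{M1}, pushed through the functor $\omega_{k,W}$ of \eqref{ONIONI} (legitimate because $p>2d+e+1$ makes $W$ a $\Z[\frac{1}{(2d+e+1)!}]$-algebra) and then through the R\"ulling--Chatzistamatiou functor \eqref{monsterhouse}, gives (1) and (2), and (3) follows since a direct summand of a finitely generated $W$-module is finitely generated.

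Part (4), however, has a genuine gap that you flag but do not close. Your route applies part (3) to the pair $(X\times X,\,Y\times Y)$, which requires invoking the integral comparison functor \eqref{intkont} for that pair, i.e.\ the hypothesis $p>4d+(e+1)^2$ coming from the doubled dimension and the Segre embedding; this is strictly stronger than the stated $p>2d+e+1$. ``Implicitly enlarging the bound'' changes the statement of the theorem, and ``re-applying the hypothesis to the product embedding'' is not available, since the hypothesis is imposed on $X$ and $Y$, not on their squares. The paper avoids the issue by never running the integral Grothendieck--Riemann--Roch comparison for the products: it starts from the split injection $M(X)_{R}(d_Y)[2d_Y] \hookrightarrow \bigoplus_{i=0}^{d_X+d_Y}M(Y)_{R}(i)[2i]$ in $\DM^\eff(k,R)$ furnished by \cite[Corollary 3.4]{M1} under the stated bound on $p$, and tensor-squares it using the monoidal structure, obtaining a split injection $(X\times X)_R \hookrightarrow \bigoplus_{i,j}(Y\times Y)_R$ directly in $C_{k,R}$. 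The functor $\Gamma^r_{HW}$ of \eqref{monsterhouse} is defined on all of $C_{k,W}$ with no dimension or embedding restriction, so it applies to this split injection; since $Y$ ordinary implies $Y\times Y$ Hodge--Witt by Proposition~\ref{threeJoshi}, the direct summand $\bigoplus_{j-i=r}H^i(X\times X,W\Omega^j)$ is finitely generated, so $X\times X$ is Hodge--Witt and $X$ is ordinary by Proposition~\ref{threeJoshi} again. You should replace the external-product/Segre reduction by this tensor-square argument; as written, your proof of (4) only establishes the claim under a stronger hypothesis on $p$ than the theorem asserts.
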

\begin{proof}
$(1)$. By \cite[Lemma 3.2]{M1}, there is an split injective
\[
X \hookrightarrow Y
\]
in $\KM(k)$. For simplicity of notation we write $R$ instead of ${\Z[\frac{1}{(2d+e+1)!}]}$. The $R$-linear functor
\[
\omega_R:\KM_{(e)}^{\leq d}(k)_R \to {C_{k,R}}
\]
induces the split injective
\[
\theta:X_R \hookrightarrow Y_R
\]
in $C_{k,R}$. Since $p>2d+e+1$, the Witt ring $W$ is $R$-algebra, thus the $R$-linear functor \eqref{monsterhouse} induces the split injective map of $W$-module
\[
\bigoplus_{j-i=r}H^i(X,W\Omega^j_X) \hookrightarrow \bigoplus_{j-i=r}H^i(Y,W\Omega^j_Y).
\]
$(2)$. Since there is an equivalence of dg-categories 
\[
\text{perf}_{dg}(X) \simeq \text{perf}_{dg}(Y),
\]
we have an isomorphism of non-commutative motives
\[
U(X)_R\simeq U(Y)_R.
\]
By the $R$-linear functor $\omega_R:\KM_{(e)}^{\leq d}(k)_R \to {C_{k,R}}$, we have an isomorphism 
\[
X_R \simeq Y_R
\]
in $C_{k,R}$. The functor \eqref{monsterhouse} induces an isomorphism of $W$-module 
    \[
    \bigoplus_{j-i=r}H^i(X,W\Omega^j_X) \simeq \bigoplus_{j-i=r}H^i(Y,W\Omega^j_Y).
    \]
$(3)$. By $(1)$, we obtain the claim.
$(4)$. By \cite[Corollary 3.4]{M1}, there is a split injective morphism 
\[
M(X)_{R}(d_Y)[2d_Y] \hookrightarrow \bigoplus_{i=0}^{d_X+d_Y}M(Y)_{R}(i)[2i]
\]
in $\DM^\eff(k,R)$. By the tensor structure in $\DM^\eff(k,R)$, we have a split injective map
\[
h(X\times X) \hookrightarrow \bigoplus_{i=0}^{d_X+d_Y}\bigoplus_{j=0}^{d_X+d_Y} h(Y\times Y)\otimes T^{i+j}
\]
in $\Chow(k)_R$. Thus we have a split injective
\[
(X\times X)_R \hookrightarrow \bigoplus_{i=0}^{d_X+d_Y}\bigoplus_{j=0}^{d_X+d_Y} (Y\times Y)_R
\]
in $C_{k,R}$. Thus we obtain the claim.
\end{proof}

\section{Descent of semi-orthogonal decomposition}

In this section, we shall show that the property of the semi-orthogonal decomposition of derived categories is stable with respect to field extensions. Theorem~\ref{5.2} follows from Proposition \ref{propprop} and Lemma \ref{RUNA} below.

\begin{thm}\label{5.2}
Take smooth projective varieties $X$ and $Y_1$,...,$Y_j$ over $K$ and Azumaya algebras $\alpha_e$ on $Y_e$ for each $e$. If there is a semi-orthogonal decomposition 
\[
D^b(X_{\ol{K}}) \overset{\text{SOD}}{\simeq} <D^b(Y_{1,\ol{K}}, \alpha_{1,\ol{K}}),....,D^b(Y_{j,\ol{K}}, \alpha_{j,\ol{K}})>,
\]
then there is a finite extension $L/K$ such that there is a semi-orthogonal decomposition 
\[
D^b(X_{L}) \overset{\text{SOD}}{\simeq} <D^b(Y_{1,L}, \alpha_{1,L}),....,D^b(Y_{j,L}, \alpha_{j,L})>
\]
\end{thm}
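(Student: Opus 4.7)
The plan is to realize each SOD component via a Fourier--Mukai kernel and then descend by a standard spreading-out argument. First I would apply Orlov's representability theorem, in the twisted form due to Canonaco--Stellari which accommodates Azumaya-twisted derived categories, to present each embedding $D^b(Y_{e,\ol{K}},\alpha_{e,\ol{K}}) \hookrightarrow D^b(X_{\ol{K}})$ as a Fourier--Mukai functor $\Phi_{P_e}$ with kernel $P_e$ on the appropriately twisted product $(Y_e\times X)_{\ol{K}}$. Each $P_e$ is a bounded complex of coherent twisted sheaves on a variety over $\ol{K}$, so it descends to some finite extension $L_e/K$; letting $L$ be the compositum $L_1 \cdots L_j$, still finite over $K$, we obtain kernels $P_{e,L}$ on $(Y_e \times X)_L$ whose base change to $\ol{K}$ recovers $P_e$.

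After possibly enlarging $L$, I would then verify that the functors $\Phi_{P_{e,L}}$ assemble into a semi-orthogonal decomposition of $D^b(X_L)$. The three conditions to check are: (i) each $\Phi_{P_{e,L}}$ is fully faithful; (ii) $\Hom(\Phi_{P_{f,L}}(a),\Phi_{P_{e,L}}(b)[n])=0$ for $e<f$ and all $a,b,n$; (iii) the essential images generate $D^b(X_L)$. Conditions (i) and (ii) are cohomological: they reduce to finitely many finite-dimensional $L$-vector spaces of Exts being zero, or to finitely many unit/counit morphisms being isomorphisms. Since these hold after the faithfully flat base change $L \hookrightarrow \ol{K}$, they descend to $L$, and one only has to test them on finitely many generators of the source categories (enlarging $L$ to absorb these finitely many checks if needed).

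The main obstacle is condition (iii), since generation is not \emph{a priori} a finite cohomological condition. My approach would be to descend, alongside the $P_e$, the projection and mutation functors associated to the SOD over $\ol{K}$. These are themselves Fourier--Mukai functors with kernels on $X_{\ol{K}}\times X_{\ol{K}}$ satisfying triangulated relations --- an exact triangle on the diagonal expressing the iterated semi-orthogonal projections onto each $\Phi_{P_e}(D^b(Y_e,\alpha_e))$ --- which are encoded by finitely many cohomological identities. All such kernels and the triangles between them spread out to $L$ after a further finite extension, yielding the mutation data and hence the SOD over $L$. Equivalently, one may exploit that $X_L$ is smooth projective and so $D^b(X_L)$ admits a classical generator $G$, and check that $G$ lies in the triangulated hull of $\bigcup_e \Im(\Phi_{P_{e,L}})$ by descending the finitely many Ext computations and exact triangles that witness this over $\ol{K}$. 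I expect Proposition~\ref{propprop} and Lemma~\ref{RUNA} to correspond precisely to the spreading-out of kernels and to this descent of the SOD-defining axioms, respectively.
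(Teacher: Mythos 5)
Your proposal is correct and follows essentially the same route as the paper: Proposition~\ref{propprop} is exactly the spreading-out of the Canonaco--Stellari kernels together with flat base change of Hom-spaces (which also gives semiorthogonality), and Lemma~\ref{RUNA} is your second, ``equivalent'' option for generation, implemented concretely with the finite generating set $\{\sO_{X}(-l)\}_{0\le l\le m}$ of line bundles already defined over $K$ and a descent of the finitely many morphisms and cones witnessing membership in the triangulated hull (Lemma~\ref{megane}). The paper does not use the mutation-kernel variant you propose first, but your fallback matches its argument precisely.
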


\subsection{} In this section, we shall recall the twisted sheaves theory and twisted Fourier-Mukai transformation from \cite{Azumayabook} and \cite{twistedFourierMukai}. For an noetherian scheme $X$, let $\sR$ be a $\sO_X$-algebra. Assume that $\sR$ is finitely generated projective $\sO_X$-module and that the morphism of $\sO_X$-modules:
\[
\sR\otimes_{\sO_X} \sR^{\text{op}} \to \mathcal{E}nd_{\sO_X}(\sR)
\]
is an isomorphism, where $\sR^{\text{op}}$ is the opposite algebra of $\sR$. Then $\sR$ is called an \textit{Azumaya} algebra on $X$. We say that two Azumaya algebra $\sR$ and $\sP$ on $X$ are \textit{Morita equivalent}, written $\sR\sim_{M} \sP$, if there are finitely generated projective $\sO$-modules $\sF$ and $\sF'$ such that
\[
\sR\otimes_{\sO_X} \mathcal{E}nd_{\sO_X}(\sF) \simeq \sP \otimes_{\sO_X} \mathcal{E}nd_{\sO_X}(\sF').
\]
We consider the class $\{\text{Azumaya algebra on }X\}$ of Azumaya algebras on $X$. The Brauer group $Br(X)$ is the group given by:
\[
Br(X)=\{\text{Azumaya algebra on }X\}/ \sim_{\text{M}}
\]
with a group structure given by the tensor product over $\sO_X$. Gabber and de Jong showed that if $X$ is  quasi-compact and separated then $Br(X)$ is isomorphic to the torsion in the \'etale cohomology group $H^2_\et(X,\mathbb{G}_m)_{\text{tor}}$, \cite[Theorem]{Jong}. We identify an Azumaya algebra $\sR$ with the corresponding element $\alpha \in H^2_\et(X,\mathbb{G}_m)_{\text{tor}}$. For an Azumaya algebra $\alpha$, there is an \'etale covering $\{U_i \to X\}$ such that $\alpha$ is represented by a \v{C}ech cocycle $\alpha_{ijk}\in \Gamma(U_i \times_X U_j \times_X U_k, \mathbb{G}_m)$. An $\alpha$-\textit{twisted sheaf} is given by a system $(\mathcal{M}_{i},\phi_{ij})$ where each $\mathcal{M}_i$ is quasi-coherent $\sO_{U_i}$-module on $U_i$, and where $\phi_{ij}:\mathcal{M}_i \otimes_{\sO_{U_i}} \sO_{ij} \to \mathcal{M}_j\otimes_{\sO_{U_j}} \sO_{U_{ij}}$ are isomorphism and satisfies
\[
\phi_{jk} \circ \phi_{ij}|_{{U_{ijk}}} = \alpha_{ijk}\phi_{ik}|_{{U_{ijk}}}
\]
over $U_{ijk}$. Let us denote $\text{qCoh}(X,\alpha)$ the Abelian category of $\alpha$-{twisted sheaves} and $\text{Coh}(X,\alpha)$ the Abelian category of $\alpha$-{twisted coherent sheaves}, and we write its bounded derived category by $D^b(X,\alpha)$. We shall recall Canonaco-Stellar's result. For two Azumaya algebra $\alpha$ and $\beta$ on $X$, take an \'etale covering $\{U_i \to X\}$ such that $\alpha$ and $\beta$ can be represented by $\alpha_{ijk}\in \Gamma(U_i \times_X U_j \times_X U_k, \mathbb{G}_m)$ and $\beta_{ijk}\in \Gamma(U_i \times_X U_j \times_X U_k, \mathbb{G}_m)$. %
For an $\alpha$-twisted sheaf $\mathcal{M}_a=(\mathcal{M}_{a,i},\phi_{a,ij})$ and a $\beta$-twisted sheaf $\mathcal{M}_b=(\mathcal{M}_{b,i},\phi_{b,ij})$, let us denote $\mathcal{M}_a\otimes \mathcal{M}_b$ the $\alpha\beta$-twisted sheaf given by the system: 
\[
(\mathcal{M}_{a,i} \otimes_{\sO_{U_i}}\mathcal{M}_{b,i}, \phi_{a,ij}\otimes_{\sO_{U_{ij}}} \phi_{b,ij}).
\]
For an $\alpha$-twisted sheaf $P=(\mathcal{M}_{a,i},\phi_{a,ij})$, we say $P$ is \textit{projective} if each $\mathcal{M}_{a,i}$ is a locally free $\sO_{U_i}$-module. For a projective bounded complex $P^\bullet$ of $\alpha$-twisted sheaves and a complex $M^\bullet$ of $\beta$-twisted sheaves, let us denote $P^\bullet \otimes^{\mathbb{L}} M^\bullet$ the total complex $\text{Tot}(P^\bullet \otimes M^*)$. There is a functor
\[
\text{Comp}^{b}(\text{Coh}(X,\beta))\overset{P^\bullet \otimes^{\mathbb{L}}-}{\longrightarrow} \text{Comp}^{b}(\text{Coh}(X,\alpha\beta)).
\]
Moreover, this extends to the derived functor:
\[
D^b(X,\beta) \overset{P^\bullet \otimes^{\mathbb{L}}-}{\longrightarrow} D^b(X,\alpha\beta).
\]
For a flat morphism $f:Y \to X$ of noetherian separated schemes and an Azumaya algebra $\alpha$ on $X$, an $\alpha$-twisted sheaf $(\mathcal{M}_i,\phi_{ij})$, we shall study the pull back functor. The pull back $f^*(\mathcal{N}_i,\phi_{ij})$ is given by $(f^*\mathcal{M}_i,f^*\phi_{ij})$ and it is a $f^*\alpha$-twisted sheaf, and there are functors $f^*:\text{qCoh}(X,\alpha) \to \text{qCoh}(Y,f^*\alpha)$ and $f^*:\text{Coh}(X,\alpha) \to \text{Coh}(Y,f^*\alpha)$. 

For a flat projective morphism $f:Y \to X$ of noetherian and separated schemes and an Azumaya algebra $\alpha$ on $X$, choose an \'etale covering $\{U_i \to X\}$ and a \v{C}ech cocyle $\alpha_{ijk}\in \Gamma(U_i\times_X U_j\times_X U_k,\mathbb{G}_m)$ representing $\alpha$. Then $f^*\alpha$ is represented by the \'etale covering $\{f^{-1}U_i \to Y\}$ and $f^{*}\alpha_{ijk}\in \Gamma(f^{-1}U_i\times_Y f^{-1}U_j\times_Y f^{-1}U_k,\mathbb{G}_m)$. For a $f^*\alpha$-twisted sheaf $(\mathcal{M}_i,\phi_{ij})$, the push forward $f_*(\mathcal{N}_i,\phi_{ij})$ is given by $(f_*\mathcal{N}_i, f_\sharp\phi_{ij})$ where $f_\sharp\phi_{ij}: f_*\mathcal{N}_i\otimes_{U_i}U_{ij} \simeq  f_*\mathcal{N}_j\otimes_{U_j}U_{ij}$ is the isomorphism which fits into the following commutative diagram:
\[
\xymatrix{
f_*\bigl(\mathcal{N}_i \otimes_{\sO_{f^{-1}U_{i}}}\sO_{f^{-1}U_{ij}}\bigr) \ar[rr]_-{\text{proj.form}}^-{\simeq} \ar[d]_{f_*\phi_{ij}}^{\simeq}& &f_*\mathcal{N}_i \otimes_{\sO_{U_{i}}}\sO_{U_{ij}} \ar[d]^-{f_\sharp\phi_{ij}}\\
f_*\bigl(\mathcal{N}_j \otimes_{\sO_{f^{-1}U_{j}}}\sO_{f^{-1}U_{ij}}\bigr) \ar[rr]_-{\text{proj.form}}^-{\simeq} && f_*\mathcal{N}_j \otimes_{\sO_{U_{j}}}\sO_{U_{ij}}
}
\]

\begin{lemma} \label{lemmafFfG}
For a flat projective morphism $g: V\to U$ and $w\in \Gamma(U,\sO_{U}^*)$ and coherent sheaves $F$ and $G$ on $V$, we assume there is an isomorphism $\phi:F \simeq G$. Then $g^*w$ induces an isomorphism $g^*w\cdot\phi:F\simeq G$ and satisfies the following
\[
g_*(g^*w\cdot \phi)\simeq w\cdot g_*\phi
\]
as isomorphisms from $f_*F$ to $f_*G$.
\end{lemma}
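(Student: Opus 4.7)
The plan is to unpack both sides directly from the definition of the pushforward and its $\sO_U$-module structure, since no non-trivial geometric input is actually needed here; the flatness and projectivity of $g$ are present only to make the ambient setup (pushforwards of twisted coherent sheaves, projection formula) well-behaved, but for this statement the only ingredient is the tautological compatibility between $g^{*}$ and the module structure on $g_{*}$.

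First I would note that because $w$ lies in $\Gamma(U,\sO_U^{*})$, the pullback $g^{*}w$ lies in $\Gamma(V,\sO_V^{*})$, so multiplication by $g^{*}w$ is an automorphism of every coherent sheaf on $V$; in particular $(g^{*}w)\cdot \phi$ is indeed an isomorphism $F\iso G$. Next I would recall that the $\sO_U$-module structure on $g_{*}F$ is defined so that, for any open $U'\subset U$ and any section $s\in F(g^{-1}U')$, and any $t\in\sO_U(U')$, one has $t\cdot s = (g^{*}t)\cdot s$, where the right-hand side uses the $\sO_V$-module structure on $F$.

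With these two observations, the computation is immediate. On sections over $U'\subset U$, applied to $s\in F(g^{-1}U')$, the left-hand side is
\[
g_{*}\bigl((g^{*}w)\cdot \phi\bigr)(s) \;=\; (g^{*}w)\cdot \phi(s),
\]
with the action on the right interpreted in $G$. By the module structure recalled above, this equals $w\cdot \phi(s)$ in $g_{*}G$, which is exactly $\bigl(w\cdot g_{*}\phi\bigr)(s)$. Since this holds on every local section over every open $U'$, the two morphisms $g_{*}\bigl((g^{*}w)\cdot \phi\bigr)$ and $w\cdot g_{*}\phi$ agree as isomorphisms $g_{*}F \to g_{*}G$.

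There is no real obstacle; the only thing to be careful about is not to confuse the two module structures (the $\sO_V$-action used on $F$ versus the $\sO_U$-action induced on $g_{*}F$), and the lemma is really a tautology once this distinction is made explicit. If one prefers, the same statement can be seen more abstractly: $g_{*}$ is an $\sO_U$-linear functor, so scalar endomorphisms of $F$ coming from pullback of elements of $\sO_U$ are sent to the corresponding scalar endomorphisms of $g_{*}F$, and this commutes with post-composition with any morphism in $\mathrm{Coh}(V)$.
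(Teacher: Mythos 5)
Your proof is correct. The only difference from the paper is one of packaging: the paper regards $g^*w$ as an automorphism of $\sO_V$ and proves the identity by chasing a commutative diagram built from the projection-formula isomorphism $g_*(F\otimes_{\sO_V}\sO_V)\simeq g_*(F)\otimes_{\sO_U}\sO_U$, whereas you verify the equality directly on sections using the definition of the $\sO_U$-module structure on $g_*F$ (namely $t\cdot s=(g^*t)\cdot s$). These are two presentations of the same underlying tautology --- the projection formula for the structure sheaf \emph{is} the statement that the module structure on $g_*F$ is induced by pullback --- so nothing is gained or lost logically. Your section-level argument is the more elementary of the two; the paper's diagrammatic phrasing has the mild advantage that it matches exactly how the transition isomorphisms $f_\sharp\phi_{ij}$ of the pushed-forward twisted sheaf are defined in the surrounding text (via the projection-formula squares), so the subsequent application of the lemma to deduce $f_\sharp\phi_{jk}\circ f_\sharp\phi_{ij}=\alpha_{ijk}\,f_\sharp\phi_{ik}$ reads as a direct substitution. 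Your closing remark that the statement is just $\sO_U$-linearity of the functor $g_*$ is exactly the right way to see it.
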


\begin{proof}
We shall regard $g^*w$ as an isomorphism $g^*w:\sO_V\simeq \sO_V$. By the projection formula, there is a commutative diagram:
\[
\xymatrix{
g_*F \ar[d]_-{g_*(g^*w\cdot \phi)}^-{\simeq} \ar[r]^-{\simeq} & g_*(F\otimes_{\sO_V}\sO_V) \ar[d]^-{g_*(\phi\otimes g^*w)}_-{\simeq} \ar[r]_-{\text{proj.form}}^-{\simeq} & g_*(F)\otimes_{\sO_U}\sO_U \ar[d]_-{\simeq}^-{g_*\phi\otimes w}\\
g_*G \ar[r]_-{\simeq} & g_*(G\otimes_{\sO_V}\sO_V) \ar[r]_-{\text{proj.form}}^-{\simeq}&  g_*(G)\otimes_{\sO_U}\sO_U
}
\]
Thus we obtain the claim. 
\end{proof}
Applying Lemma~\ref{lemmafFfG} for a flat projective morphism, $f^{-1}U_i \to V_i$ and an isomorphism $\phi_{jk} \circ \phi_{ij}|_{{f^{-1}U_{ijk}}}: \mathcal{N}_{i}\otimes_{\sO_{f^{-1}U_i}}\sO_{f^{-1}U_{ijk}} \simeq \mathcal{N}_{k}\otimes_{\sO_{f^{-1}U_k}}\sO_{f^{-1}U_{ijk}}$, we know $f_*(f^*\alpha_{ijk}\phi_{ik})=\alpha_{ijk}f_{\sharp}\phi_{ik}$ and we obtain
\[
f_\sharp\phi_{jk} \circ f_\sharp\phi_{ij} =\alpha_{ijk} f_\sharp\phi_{ik}.
\]
Thus we know that $(f_*\mathcal{N}_i, f_\sharp\phi_{ij})$ is a $\alpha$-twisted sheaf and there are functors $f_*:\text{qCoh}(Y,f^*\alpha) \to \text{qCoh}(X,\alpha)$. If $(\mathcal{N}_i,\phi_{ij})$ is a coherent, then $(f_*\mathcal{N}_i, f_\sharp\phi_{ij})$ is also coherent since $f_*\mathcal{N}_i$ is a coherent sheaf on $U_i$, thus there is a functor $f_*:\text{Coh}(Y,f^*\alpha) \to \text{Coh}(X,\alpha)$. This functor induces the following
\[
Rf_*:D^b(Y,f^*\alpha) \to D^b(X,\alpha),
\]
where we need to check that $f_*$ is left exact, and each $R^if_*(\mathcal{N}_i,\phi_{ij})$ is coherent for any coherent $f^*\alpha$-twisted sheaf. Let us prove this. For a flat morphism $g:U \to X$, we consider the following Cartesian diagram:
\[
\xymatrix{
U\times_X Y \ar[r]^{\tilde{g}} \ar[d]_{\tilde{f}} & Y \ar[d]^{f} \\
U \ar[r]_{g} & X
}
\]
It is easy to check that $g^* \circ f_* \simeq \tilde{f}_*\circ \tilde{g}^*$ as functors $\text{Coh}(Y,f^*\alpha) \to \text{Coh}(U,g^*\alpha)$. We denote the \'etale map $U_i \to X$ by $q_i$. For any $i$, we consider the following Cartesian diagram:
\[
\xymatrix{
f^{-1}U_i \ar[r]^{\tilde{q}_i} \ar[d]_{\tilde{f}} & Y \ar[d]^{f} \\
U_i \ar[r]_{q_i}& X
}
\]
Since $q_i$ is an \'etale morphism, it is enough to prove that $\tilde{f}_*:\text{Coh}(f^{-1}U_i,\tilde{f}^*q_i^*\alpha) \to \text{Coh}(U_i,q_i^*\alpha)$ is left exact, and each $R^i\tilde{f}_*$ preserves coherentness. Since $\alpha$ is trivial on $U_i$ we know that every $\tilde{f}^*q_i^*\alpha$-twisted coherent sheaf is a usual coherent sheaf on $f^{-1}U_i$. Thus we obtain the claim. Thanks to this, we obtain the following functor
\[
Rf_*:D^b(Y,f^*\alpha) \to D^b(X,\alpha).
\]
For a $f^*\alpha$-twisted sheaf $(\mathcal{M}_i,\phi_{ij})$, it is easy to see that there is an isomorphism
\begin{equation}\label{YAOYA}
R^if_*(\mathcal{M}_i,\phi_{ij})=(R^if_*\mathcal{M}_i,R^if_\sharp\phi_{ij}),
\end{equation} where $R^if_\sharp \phi_{ij}$ is the morphism fits into the following diagram
\[
\xymatrix{
R^if_*\bigl(\mathcal{N}_i \otimes_{\sO_{f^{-1}U_{i}}}\sO_{f^{-1}U_{ij}}\bigr) \ar[rr]_-{\text{proj.form}}^-{\simeq} \ar[d]_{R^if_*\phi_{ij}}^{\simeq}& &R^if_*\mathcal{N}_i \otimes_{\sO_{U_{i}}}\sO_{U_{ij}} \ar[d]^-{R^if_\sharp\phi_{ij}}\\
R^if_*\bigl(\mathcal{N}_j \otimes_{\sO_{f^{-1}U_{j}}}\sO_{f^{-1}U_{ij}}\bigr) \ar[rr]_-{\text{proj.form}}^-{\simeq} && R^if_*\mathcal{N}_j \otimes_{\sO_{U_{j}}}\sO_{U_{ij}}
}
\]

Here is a big theorem proved by Canonaco-Stellari \cite{twistedFourierMukai}.
\begin{thm}[Canonaco-Stellari, Theorem~1.1 \cite{twistedFourierMukai}]\label{twistedreepresentation}
Let $X$ and $Y$ be smooth projective varieties over a field $K$, and $\alpha$ and $\beta$ be Azumaya algebra on $X$ and $Y$, respectively. For a triangulated $K$-linear full functor $F:D^b(X,\alpha) \to D^b(Y,\beta)$, there exist $\sE\in D^b(X\times Y,\alpha^{-1} \boxtimes \beta)$ such that $F$ is equivalent to the following functor
\begin{equation}\label{eqHIYAKE}
D^b(X,\alpha) \overset{q^*}{\to} D^b(X\times Y,q^*\alpha) \overset{\sE\otimes^{\mathbb{L}}-}{\to} D^b(X\times Y, p^*\beta) \overset{Rp_*}{\to} D^b(Y,\beta) 
 \end{equation}
 where $p$ is the projection $X\times Y \to Y$ and $q$ is the projection $X\times Y \to X$, and $\alpha^{-1} \boxtimes \beta =q^*\alpha^{-1}\cdot p^*\beta$.
\end{thm}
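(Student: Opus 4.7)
The plan is to adapt Orlov's classical representation theorem for fully faithful functors between derived categories of smooth projective varieties to the twisted setting with Azumaya algebras. I would proceed through the machinery of dg-enhancements, which gives the cleanest route even in the untwisted case, rather than the original spanning-class argument of Orlov.

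First, I would establish that both $D^b(X,\alpha)$ and $D^b(Y,\beta)$ admit canonical dg-enhancements (for instance via $h$-injective resolutions of twisted sheaves) and that these enhancements are essentially unique. The uniqueness in the twisted setting should follow by adapting the Lunts--Orlov argument: one observes that $\text{Coh}(X,\alpha)$ is a Grothendieck abelian category of finite cohomological dimension, and that any $\alpha$-twisted locally free sheaf of positive rank (which exists on a smooth projective variety because one can realise $\alpha$ as an Azumaya algebra of the form $\mathcal{E}nd(\sE)$ locally) provides a compact generator of the unbounded derived category.

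Second, I would lift the triangulated $K$-linear functor $F$ to a quasi-functor $\tilde{F}$ between these dg-enhancements, using the uniqueness just established. By To\"en's derived Morita theorem, a quasi-functor between smooth proper dg-categories is classified by an object of the bimodule category. To identify this bimodule with a complex on $X\times Y$ with twisted coefficients, I would invoke a K\"unneth-type equivalence between the dg-enhancement of $D^b(X\times Y,\alpha^{-1}\boxtimes\beta)$ and the tensor product of the dg-enhancements of $D^b(X,\alpha)^{op}$ and $D^b(Y,\beta)$. This equivalence can be obtained by choosing a common refinement of the \'etale covers trivialising $\alpha$ and $\beta$, and then reducing locally to the classical (untwisted) K\"unneth formula for dg-categories of coherent sheaves on smooth projective varieties.

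Third, given the kernel $\sE\in D^b(X\times Y,\alpha^{-1}\boxtimes\beta)$ produced in this way, I would verify that the functor defined by the formula \eqref{eqHIYAKE} is well-defined and actually coincides with the original $F$. The well-definedness reduces to the projection formula and flat base change for twisted sheaves, both of which follow from the \v{C}ech-cocycle description of pushforward and pullback discussed earlier in the paper, together with the identification \eqref{YAOYA} of higher direct images of twisted sheaves; the agreement with $F$ then follows by evaluating both functors on a compact generator.

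The hard part, in my view, is the K\"unneth-type equivalence for dg-categories of twisted sheaves on the product, because the twisting $\alpha^{-1}\boxtimes\beta$ requires careful compatibility of \v{C}ech cocycles on a common refinement; a secondary difficulty is verifying the Mittag-Leffler-type Ext-vanishing conditions needed for Lunts--Orlov uniqueness in the twisted setting. Once these technical points are in place, the rest of the argument runs parallel to the untwisted Lunts--Orlov--To\"en story.
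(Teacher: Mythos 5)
The paper does not actually prove this statement: it is imported verbatim as Theorem~1.1 of Canonaco--Stellari, whose published proof is an adaptation of Orlov's original representability argument (ample sequences and convolutions of complexes of twisted sheaves), not a dg-categorical one. So your proposal is necessarily a different route, and the question is whether it closes. As written, it does not.

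The critical gap is your second step: you claim that the uniqueness of dg-enhancements (Lunts--Orlov) lets you lift the triangulated functor $F$ to a quasi-functor $\tilde F$. Uniqueness of enhancements is a statement about the \emph{categories}, not about functors between them; it does not produce a dg-lift of a given exact functor, and in general no such lift exists --- by work of Rizzardo, Van den Bergh and Neeman there are exact functors between bounded derived categories of smooth projective varieties that are not of Fourier--Mukai type, hence admit no dg-lift at all. The positive lifting results in the literature require the functor to be \emph{fully faithful} (one enhances the essential image as a full subcategory of the target and builds the lift from the induced equivalence onto it), whereas the theorem as stated concerns a functor that is merely full. Canonaco--Stellari can handle this weaker hypothesis precisely because their Orlov-style argument only uses the vanishing of negative $\Ext$'s between images of twisted sheaves, which fullness guarantees; your To\"en-style argument has no analogous mechanism for non-faithful $F$. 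If you restrict to fully faithful $F$ (which is all the paper uses in Proposition~\ref{propprop}), your outline can be completed, but the lifting step must then be replaced by an actual argument --- e.g.\ the Lunts--Orlov representability theorem for fully faithful functors, transported to the twisted setting via the Morita equivalence between $\alpha$-twisted sheaves and modules over an Azumaya algebra representing $\alpha$ --- rather than deduced from uniqueness of enhancements. By contrast, the K\"unneth-type comparison you single out as the hard part is comparatively routine once one works with modules over $\mathcal{A}^{\mathrm{op}}\boxtimes\mathcal{B}$ for Azumaya algebras $\mathcal{A}$, $\mathcal{B}$ representing $\alpha$, $\beta$; no common refinement of \v{C}ech covers is needed there.
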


Let us prove that the functor \eqref{eqHIYAKE} compatible with any flat base change.

For flat morphisms $f:Y \to X$ and $g:Z \to X$ of noetherian quasi-compact and separated schemes, Azumaya algebras $\alpha$ on $X$. Let us denote $W$ the Cartesian $Z\times_X Y$. 
\[
\xymatrix{
W \ar[r]^{\tilde{g}} \ar[d]_{\tilde{f}} & Y \ar[d]^{f} \\
Z \ar[r]_{g} & X
}
\]
where $\ol{g}$ (resp. $\ol{f}$) is the base of $g$ (resp. $f$) along $f$ (resp. $g$). Consider the following diagram:
\begin{equation}\label{5555}\xymatrix{
D^b(Y,f^*\alpha) \ar[rr]^-{\tilde{g}^*} & & D^b(W,\tilde{g^*}f^*\alpha )\\
D^b(X,\alpha) \ar[rr]_{g^*} \ar[u]^{f^*}& & D^b(Z,g^*\alpha) \ar[u]_{f^*}
}
\end{equation}
We shall obtain the following by the definition of flat pullback of twisted sheaves.
\begin{lemma}
The diagram \ref{5555} commutes.
\end{lemma}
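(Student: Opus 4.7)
The plan is to verify commutativity component-by-component using an \'etale trivialization of $\alpha$. First I would fix an \'etale covering $\{U_i \to X\}$ such that $\alpha$ is represented by a \v{C}ech cocycle $\alpha_{ijk} \in \Gamma(U_{ijk}, \mathbb{G}_m)$. Pulling back along $f$ gives the \'etale covering $\{f^{-1}U_i \to Y\}$ with cocycle $f^*\alpha_{ijk}$ representing $f^*\alpha$, and similarly for $g$ and for the two successive base changes $\tilde{g}$ and $\tilde{f}$. Since $f\circ \tilde{g} = g\circ \tilde{f}$, the pullback cocycles on $W$ agree strictly, so $\tilde{g}^*f^*\alpha = \tilde{f}^*g^*\alpha$ as the same class on the \'etale covering $\{\tilde{g}^{-1}f^{-1}U_i = \tilde{f}^{-1}g^{-1}U_i \to W\}$; this makes the target categories in the diagram literally equal (not merely isomorphic), removing any ambiguity in the statement.

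Next, for an $\alpha$-twisted sheaf $\mathcal{M} = (\mathcal{M}_i, \phi_{ij})$, unwinding the definition of the flat pullback functors gives
\[
\tilde{g}^*f^*\mathcal{M} = \bigl(\tilde{g}^*f^*\mathcal{M}_i,\, \tilde{g}^*f^*\phi_{ij}\bigr), \qquad \tilde{f}^*g^*\mathcal{M} = \bigl(\tilde{f}^*g^*\mathcal{M}_i,\, \tilde{f}^*g^*\phi_{ij}\bigr).
\]
The canonical base-change isomorphism of ordinary (untwisted) quasi-coherent sheaves $\tilde{g}^*f^*\mathcal{M}_i \simeq \tilde{f}^*g^*\mathcal{M}_i$ on each $\tilde{g}^{-1}f^{-1}U_i$ assembles into a morphism between the two twisted-sheaf systems. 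The only nontrivial point is that this collection of isomorphisms is compatible with the gluing data $\tilde{g}^*f^*\phi_{ij}$ and $\tilde{f}^*g^*\phi_{ij}$; but this is immediate from naturality of the base-change isomorphism applied to the morphism $\phi_{ij}$ of coherent sheaves on overlaps, together with the observation that the twisting factors $\tilde{g}^*f^*\alpha_{ijk}$ and $\tilde{f}^*g^*\alpha_{ijk}$ coincide.

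Finally I would extend this from $\text{Coh}(X,\alpha)$ to $D^b(X,\alpha)$: since $f$ and $g$ are flat, $f^*$ and $g^*$ (and their base changes $\tilde{f}^*, \tilde{g}^*$) are exact and pass termwise to bounded complexes, so the natural isomorphism constructed above on the level of sheaves extends termwise to a natural isomorphism of triangulated functors $\tilde{g}^*\circ f^* \simeq \tilde{f}^*\circ g^*$ between $D^b(X,\alpha)$ and $D^b(W,\tilde{g}^*f^*\alpha)$. The main obstacle is purely bookkeeping: keeping the \'etale covers and cocycles aligned on all four corners of the square, which is why I would work on a single trivializing cover $\{U_i\}$ of $\alpha$ throughout the argument. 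Once the cocycle identification $\tilde{g}^*f^*\alpha_{ijk} = \tilde{f}^*g^*\alpha_{ijk}$ is in place, the statement reduces on each $U_i$ to the standard commutativity of flat pullback of ordinary quasi-coherent sheaves under base change.
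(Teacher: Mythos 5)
Your argument is correct and is exactly the route the paper intends: the paper offers no written proof beyond the remark that the lemma follows ``by the definition of flat pullback of twisted sheaves,'' and your component-by-component unwinding on a fixed trivializing \'etale cover, together with the observation that the two pulled-back cocycles on $W$ coincide and that flat pullback is exact so the identification passes termwise to bounded complexes, is precisely the verification being left implicit. No gap.
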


For a flat morphism $f:Y \to X$ of noetherian quasi-compact and separated schemes, Azumaya algebras $\alpha$ and $\beta$ on $X$ and a complex of projective $\alpha$-twisted sheaves $P^\bullet$, we consider the following diagram:
\begin{equation}\label{pullbackdiagram}\xymatrix{
D^b(Y,f^*\beta) \ar[rr]^{f^*P^{\bullet}\otimes^{\mathbb{L}}-} & & D^b(Y,f^*\alpha f^*\beta)\\
D^b(X,\beta) \ar[rr]_{P^{\bullet}\otimes^{\mathbb{L}}-} \ar[u]^{f^*}& & D^b(X,\alpha\beta) \ar[u]_{f^*}
}
\end{equation}
\begin{lemma}\label{5.4}
The diagram \eqref{pullbackdiagram} commutes.
\end{lemma}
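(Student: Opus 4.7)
The plan is to reduce Lemma \ref{5.4} to the corresponding compatibility statement for a single projective $\alpha$-twisted sheaf, and then to verify that statement locally via the trivialization data on an étale cover. Since $P^\bullet$ is a bounded complex of projective $\alpha$-twisted sheaves, the construction $P^\bullet \otimes^{\mathbb{L}} -$ was defined in the excerpt as the total complex $\Tot(P^\bullet \otimes -)$. Flatness of $f$ implies that $f^*$ is exact and commutes with the formation of total complexes, and moreover that $f^*P^\bullet$ is again a bounded complex of projective $f^*\alpha$-twisted sheaves (flat pullback preserves local freeness of each $P^n_i$). Hence the commutativity of \eqref{pullbackdiagram} reduces, term by term, to exhibiting a natural isomorphism
\[
f^*(P \otimes M) \;\simeq\; f^*P \otimes f^*M
\]
for a single projective $\alpha$-twisted sheaf $P$ and an arbitrary $\beta$-twisted sheaf $M$.

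To construct this isomorphism, I fix an étale cover $\{U_i \to X\}$ trivializing both $\alpha$ and $\beta$, with cocycles $\alpha_{ijk}, \beta_{ijk} \in \Gamma(U_{ijk}, \mathbb{G}_m)$, and write $P = (P_i, \psi_{ij})$ and $M = (M_i, \phi_{ij})$. On the pulled-back cover $\{f^{-1}U_i \to Y\}$, the Azumaya class $f^*\alpha \cdot f^*\beta$ is represented by the cocycle $f^*(\alpha_{ijk}\beta_{ijk}) = f^*\alpha_{ijk}\cdot f^*\beta_{ijk}$. The twisted sheaf $f^*(P \otimes M)$ is then described by $\bigl(f^*(P_i \otimes_{\sO_{U_i}} M_i),\; f^*(\psi_{ij} \otimes \phi_{ij})\bigr)$, while $f^*P \otimes f^*M$ is described by $\bigl(f^*P_i \otimes_{\sO_{f^{-1}U_i}} f^*M_i,\; f^*\psi_{ij} \otimes f^*\phi_{ij}\bigr)$. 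The classical natural isomorphism $f^*(P_i \otimes_{\sO_{U_i}} M_i) \simeq f^*P_i \otimes_{\sO_{f^{-1}U_i}} f^*M_i$ for untwisted sheaves is compatible with restriction along $f^{-1}U_{ij} \hookrightarrow f^{-1}U_i$ and commutes with taking tensor products of homomorphisms, so these local isomorphisms glue to the desired isomorphism of $f^*\alpha \cdot f^*\beta$-twisted sheaves, and are manifestly natural in $M$.

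Finally, to promote this to the derived level, I observe that projectivity of each term of $f^*P^\bullet$ means neither side of \eqref{pullbackdiagram} requires a resolution of $M^\bullet$; both are computed as honest totalizations of bicomplexes. The untwisted naturality produces a morphism of the associated bicomplexes, which on totalization and passage to $D^b(Y, f^*\alpha \cdot f^*\beta)$ supplies the natural isomorphism of functors. The main obstacle is not any deep homological input but the bookkeeping of cocycles under pullback, which is absorbed by the identity $f^*(\alpha_{ijk}\beta_{ijk}) = f^*\alpha_{ijk} \cdot f^*\beta_{ijk}$ and an analog of Lemma~\ref{lemmafFfG} for flat pullback in place of pushforward. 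This establishes the commutativity of \eqref{pullbackdiagram}.
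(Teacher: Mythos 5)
Your proof is correct and follows essentially the same route as the paper: both arguments work term by term on the trivializing \'etale cover, use the classical isomorphism $f^*(P_i\otimes M_i)\simeq f^*P_i\otimes f^*M_i$ together with the fact that flat pullback commutes with forming total complexes, and note that no resolutions are needed since $P^\bullet$ is already projective. Your version merely spells out the cocycle bookkeeping ($f^*(\alpha_{ijk}\beta_{ijk})=f^*\alpha_{ijk}\cdot f^*\beta_{ijk}$) that the paper leaves implicit.
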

\begin{proof}
Let us denote $P^l=(P^l_i,\psi_{ij}^l)$. For a complex $(\mathcal{M}_i\bullet,\phi_{ij}^{\bullet})$ of $\beta$-twisted sheaves on $X$, we know 
\[
    f^*P^\bullet \otimes^{\mathbb{L}} f^*(\mathcal{M}_i\bullet,\phi_{ij}^{\bullet}) = (\text{Tot}(f^*\mathcal{M}_i^\bullet\otimes_{\sO_{f^{-1}U_i}} f^*P^*_i),\text{Tot}(f^*\phi_{ij}^\bullet\otimes_{\sO_{f^{-1}U_{ij}}} f^*\psi_{ij}^{*})).
\]
Since $f$ is flat, we have isomorphisms $\text{Tot}(f^*\mathcal{M}_i^\bullet\otimes_{\sO_{f^{-1}U_i}} f^*P^*_i) \simeq f^*\text{Tot}(\mathcal{M}_i^\bullet\otimes_{\sO_{U_i}} P^*_i)$ and $\text{Tot}(f^*\phi_{ij}^\bullet\otimes_{\sO_{f^{-1}U_{ij}}} f^*\psi_{ij}^{*})\simeq f^*\text{Tot}(\phi_{ij}^\bullet\otimes_{\sO_{U_{ij}}} \psi_{ij}^{*})$. Thus the right hand side of the equation is $f^*(P^\bullet \otimes(\mathcal{M}_i\bullet,\phi_{ij}^{\bullet}))$.
\end{proof}
Given a flat morphism $f:Y \to X$ and projective morphism $g:Z \to X$ of noetherian quasi-compact separated schemes and an Azumaya algebras $\alpha$ on $X$. Let us denote $W$ the Cartesian $Z\times_X Y$. 
\[
\xymatrix{
W \ar[r]^{\tilde{g}} \ar[d]_{\tilde{f}} & Y \ar[d]^{f} \\
Z \ar[r]_{g} & X
}
\]
Consider the following diagram.
\begin{equation}\label{TAKE}\xymatrix{
D^b(W,\tilde{f}^*g^*\alpha) \ar[rr]^{R\tilde{g}_*} & & D^b(Y,f^*\alpha )\\
D^b(Z,g^*\alpha) \ar[rr]_{Rg_*} \ar[u]^{\tilde{f}^*}& & D^b(X,\alpha) \ar[u]_{f^*}
}
\end{equation}
\begin{lemma}\label{5.5}
The diagram \eqref{TAKE} commutes.
\end{lemma}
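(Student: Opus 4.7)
The plan is to reduce the twisted statement to the classical (untwisted) flat base change theorem applied locally over an étale trivialization of $\alpha$, and then to verify that the local base change isomorphisms are compatible with the gluing data that encode the twisted structure. The strategy parallels exactly how Lemma~\ref{5.4} was handled for the tensor-product functor: build the comparison isomorphism locally, then check it respects transition maps.

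First I would choose an étale covering $\{U_i \to X\}_{i}$ trivializing $\alpha$, together with a Čech cocycle $\alpha_{ijk}\in \Gamma(U_{ijk},\mathbb{G}_m)$ representing $\alpha$. Pulling back along $f$, $g$, and $\tilde{f}g = f\tilde{g}$ yields étale covers and cocycles trivializing $f^*\alpha$, $g^*\alpha$, and $\tilde{f}^*g^*\alpha=\tilde{g}^*f^*\alpha$ respectively. For a coherent $g^*\alpha$-twisted sheaf $\mathcal{M}=(\mathcal{M}_i,\phi_{ij})$ on $Z$, formula \eqref{YAOYA} gives
\[
Rg_*\mathcal{M} \;=\; \bigl(Rg_*\mathcal{M}_i,\; Rg_\sharp\phi_{ij}\bigr),\qquad
R\tilde{g}_*\tilde{f}^*\mathcal{M} \;=\; \bigl(R\tilde{g}_*\tilde{f}^*\mathcal{M}_i,\; R\tilde{g}_\sharp\tilde{f}^*\phi_{ij}\bigr),
\]
while $f^*Rg_*\mathcal{M}=(f^*Rg_*\mathcal{M}_i, f^*Rg_\sharp\phi_{ij})$. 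The classical untwisted flat base change (applied in each Cartesian square obtained by pulling back the original square along $U_i \to X$) provides, for each $i$, an isomorphism
\[
\beta_i\colon f^*Rg_*\mathcal{M}_i \;\xrightarrow{\;\sim\;}\; R\tilde{g}_*\tilde{f}^*\mathcal{M}_i
\]
of complexes of coherent $\sO_{f^{-1}U_i}$-modules. The whole point is to check that the collection $(\beta_i)$ glues to an isomorphism of twisted objects, which amounts to the commutativity, on each double overlap $\tilde{g}^{-1}f^{-1}U_{ij}$, of the square
\[
\xymatrix@C=3em{
f^*Rg_*\mathcal{M}_i \otimes \sO_{f^{-1}U_{ij}} \ar[r]^-{\beta_i\otimes \mathrm{id}}\ar[d]_-{f^*Rg_\sharp\phi_{ij}}
& R\tilde{g}_*\tilde{f}^*\mathcal{M}_i \otimes \sO_{f^{-1}U_{ij}} \ar[d]^-{R\tilde{g}_\sharp\tilde{f}^*\phi_{ij}} \\
f^*Rg_*\mathcal{M}_j \otimes \sO_{f^{-1}U_{ij}} \ar[r]_-{\beta_j\otimes \mathrm{id}}
& R\tilde{g}_*\tilde{f}^*\mathcal{M}_j \otimes \sO_{f^{-1}U_{ij}}.
}
\]

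To verify this square, I would unwind the definitions of both $Rg_\sharp$ and $R\tilde{g}_\sharp$: each is constructed from an $R^\bullet$-variant of the untwisted morphism $\phi_{ij}$ followed by the projection formula isomorphism, exactly as in the diagram preceding \eqref{YAOYA}. After substitution, the square decomposes into (a) naturality of flat base change with respect to the untwisted morphism of coherent sheaves induced by $\phi_{ij}$, and (b) compatibility of the flat base change isomorphism with the projection formula for a flat projective morphism along a flat base change. Statement (a) is immediate from functoriality, and (b) is a standard compatibility between $f^*\circ Rg_*(-\otimes g^*(-))$ and $R\tilde{g}_*\circ \tilde{f}^*(-\otimes g^*(-))$ — this is where the hypothesis that $f$ is flat is used crucially, since it ensures $\tilde{f}^*$ is exact and that base change is an honest isomorphism at the level of derived categories. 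Once the two large diagrams pasted from (a) and (b) commute, the square above commutes, so the $(\beta_i)$ assemble into the desired twisted base change isomorphism.

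I expect the main obstacle to be purely bookkeeping: writing the projection-formula isomorphisms on all the overlapping charts and chasing the cocycle factor $g^*\alpha_{ijk}$ through every diagram. In particular, after applying the untwisted flat base change on each $U_i$, one has to check that the scalar twisting on triple intersections is transported correctly, which is where Lemma~\ref{lemmafFfG} (applied to $\tilde{g}$ rather than $f$) plays the same role it did in the construction of the functor $Rf_*$ itself. Since no new input beyond the classical flat base change, the projection formula, and the explicit formula \eqref{YAOYA} is needed, the argument is formal once these compatibilities are organized carefully, and yields the commutativity of \eqref{TAKE}.
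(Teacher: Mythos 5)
Your argument is correct and is essentially the paper's own proof, which simply says the claim follows from the componentwise description \eqref{YAOYA} of $Rf_*$ on an \'etale trivialization of $\alpha$; you have merely spelled out the details (local classical flat base change over each $U_i$, plus compatibility of the base change isomorphism with the projection-formula gluing data $R\tilde{g}_\sharp$ on overlaps) that the paper leaves implicit.
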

\begin{proof}
The claim follows from the isomorphism \eqref{YAOYA}.
\end{proof}
\begin{prop}\label{propprop}
Let $X$ and $Y$ be smooth projective varieties over a field $K$, $\alpha$ and $\beta$ be Azumaya algebras on $X$ and $Y$, respectively. For a triangulated $\ol{K}$-linear fully faithful functor $F:D^b(X_{\ol{K}},\alpha_{\ol{K}}) \to D^b(Y_{\ol{K}},\beta_{\ol{K}})$, there is a finite extension $L/K$ such that there exits an $L$-linear fully faithful functor $F_L:D^b(X_L,\alpha_L) \to D^b(Y,\beta_L)$ which fits into the following commutative diagram:
\begin{equation}\label{TAKE2}\xymatrix{
D^b(X_{\ol{K}},\alpha_{\ol{K}}) \ar[rr]^{F} & & D^b(Y_{\ol{K}},\beta_{\ol{K}} )\\
D^b(X_L,\alpha_L) \ar[rr]_{F_L} \ar[u]& & D^b(Y_L,\beta_L) \ar[u]
}
\end{equation}
where vertical maps are given by the base change along $\ol{K}/L$.
\end{prop}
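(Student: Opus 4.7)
My plan is to reduce the problem to descending a Fourier–Mukai kernel. By Theorem~\ref{twistedreepresentation}, the $\ol{K}$-linear functor $F$ is represented by a kernel $\sE \in D^b(X_{\ol{K}} \times Y_{\ol{K}},\, \alpha_{\ol{K}}^{-1} \boxtimes \beta_{\ol{K}})$, i.e.\ $F \simeq Rp_*(\sE \otimes^{\mathbb{L}} q^*(-))$ with $p,q$ the two projections. It suffices to descend this kernel to a finite extension $L/K$; the base-change compatibilities of pullback, tensor and projective pushforward developed in this section will then force the resulting Fourier–Mukai transform $F_L$ to fit into the desired commutative square and to be fully faithful.

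Concretely, I would proceed as follows. First, invoke Theorem~\ref{twistedreepresentation} to obtain the kernel $\sE$. Second, writing $\ol{K} = \colim_{L/K \text{ finite}} L$ as a filtered colimit, spread out $\sE$: since the Azumaya algebra $\alpha^{-1} \boxtimes \beta$ is already defined over $K$ and $X \times Y$ is of finite type over $K$, any bounded complex of $(\alpha^{-1}\boxtimes\beta)_{\ol{K}}$-twisted coherent sheaves descends, up to quasi-isomorphism, to some $\sE_L \in D^b(X_L \times Y_L,\, \alpha_L^{-1} \boxtimes \beta_L)$ after enlarging $L$. Third, define
\[
F_L := Rp_{L,*}\bigl(\sE_L \otimes^{\mathbb{L}} q_L^*(-)\bigr),
\]
where $p_L, q_L$ are the corresponding projections over $L$.

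To verify the commutativity of \eqref{TAKE2} I would decompose the base change along $\ol{K}/L$ through the three constituent functors of $F_L$: $q_L^*$ commutes with base change along the flat morphism $\Spec\ol{K}\to\Spec L$ essentially by definition of flat pullback of twisted sheaves; tensor product with the fixed complex $\sE_L$ commutes with flat pullback by Lemma~\ref{5.4}; and the pushforward $Rp_{L,*}$ commutes with flat pullback by Lemma~\ref{5.5} (applicable because $p_L$ is flat and projective). Composing the three natural isomorphisms yields the commutative square. Full faithfulness of $F_L$ would then follow by faithfully flat descent: for perfect complexes $A, B \in D^b(X_L, \alpha_L)$, the natural map $\Hom^\bullet(A,B) \to \Hom^\bullet(F_L A, F_L B)$ becomes, after tensoring with $\ol{K}$, the corresponding map for $F$ on the base-changed objects, which is an isomorphism because $F$ is fully faithful; faithful flatness of $\ol{K}/L$ then promotes this to an isomorphism over $L$.

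The main obstacle I expect is step two, the spreading-out of $\sE$. One has to confirm that the bounded derived category of $\gamma_{\ol{K}}$-twisted coherent sheaves on a finite-type $K$-scheme, with $\gamma$ already defined over $K$, is the $2$-colimit over finite subextensions $L/K$ of the corresponding categories for $\gamma_L$. My preferred route would be to identify $\gamma$-twisted coherent sheaves with coherent right modules over the Azumaya algebra representing $\gamma$, thereby reducing to the classical spreading-out of coherent sheaves on a finite-type scheme; alternatively, one could fix an \'etale trivializing cover defined over $K$ and descend the resulting \v{C}ech data $(\mathcal{M}_i,\phi_{ij})$ term by term, enlarging $L$ so that both the sheaves $\mathcal{M}_i$ and the gluing isomorphisms $\phi_{ij}$ are defined. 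Either approach should work, but one must take some care to descend the differentials of the complex simultaneously with its terms.
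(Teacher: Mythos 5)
Your proposal follows essentially the same route as the paper's proof: represent $F$ by a twisted kernel $\sE$ via Canonaco--Stellari, spread $\sE$ out to a finite extension $L$, define $F_L$ as the Fourier--Mukai transform with kernel $\sE_L$, deduce commutativity of the square from Lemma~\ref{5.4} and Lemma~\ref{5.5}, and obtain full faithfulness by flat base change of $\Hom$-complexes along $\ol{K}/L$. The spreading-out step you flag as the main obstacle is exactly the point the paper handles by the limit argument of EGA~IV (8.5.2.4), so your treatment is, if anything, more careful than the original.
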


\begin{proof}
By Proposition~\ref{twistedreepresentation}, there is an object $\sE \in D^b(X_{\ol{K}}\times_{\ol{K}} Y_{\ol{K}},\alpha_{\ol{K}}^{-1}\boxtimes\beta_{\ol{K}})$ which represents $F$, i.e. the functor $F$ is isomorphic to the functor
\[
D^b(X_{\ol{K}},\alpha_{\ol{K}}) \overset{\ol{q}^*}{\to} D^b(X_{\ol{K}}\times_{\ol{K}} Y_{\ol{K}},\ol{q}^*\alpha_{\ol{K}}) \overset{\sE\otimes^{\mathbb{L}}-}{\to} D^b(X_{\ol{K}}\times_{\ol{K}} Y_{\ol{K}}, \ol{p}^*\beta_{\ol{K}}) \overset{R\ol{p}_*}{\to} D^b(Y_{\ol{K}},\beta_{\ol{K}}).
\]
We can take a finite extension $L/K$ such that there is an object $\sE_L\in D^b(X_{L}\times_{L} Y_{L},\alpha_{L}^{-1}\boxtimes\beta_L)$ and an isomorphism $f^*\sE_L \simeq \sE$ where $f$ isa natural map $X_{\ol{K}} \times_{\ol{K}}Y_{\ol{K}} \to X_L\times_L Y_L$. 

Consider the following diagram:
\[
\xymatrix{
D^b(X_{\ol{K}},\alpha_{\ol{K}}) \ar[r]^-{\ol{q}^*} & D^b(X_{\ol{K}}\times_{\ol{K}} Y_{\ol{K}},\ol{q}^*\alpha_{\ol{K}}) \ar[r]^-{\sE\otimes^{\mathbb{L}}-} & D^b(X_{\ol{K}}\times_{\ol{K}} Y_{\ol{K}}, \ol{p}^*\beta_{\ol{K}}) \ar[r]^-{R\ol{p}_*} & D^b(Y_{\ol{K}},\beta_{\ol{K}})\\
D^b(X_L,\alpha_L) \ar[r]_-{q_L^*} \ar[u]& D^b(X_L\times_L Y_L,q_L^*\alpha_L) \ar[r]_-{\sE_L \otimes^{\mathbb{L}}-} \ar[u] & D^b(X_L\times_{L} Y_L, p_L^*\beta_L) \ar[r]_-{Rq_L^*} \ar[u] & D^b(Y_L,\beta_L) \ar[u]
}
\]
where each of vertical maps are given by flat base change map along $\Spec \ol{K} \to \Spec L$. Be Lemma~\ref{5.4} and Lemma~\ref{5.5}, the above diagram commutes. We denote $Rq_L^* \circ \sE_L \otimes^{\mathbb{L}}- \circ q_L^*$ by $F_L$. Let us show that $F_L$ is fully faithful. By the base change theorem of coherent sheaves, for objects $a,b\in D^b(X_L,\alpha_L)$, we have a commutative diagram:
\[
\xymatrix{
\Hom_{D^b(X_{\ol{K}},\alpha_{\ol{K}})}(a_{\ol{K}},b_{\ol{K}}) \ar[r]_-{\simeq}^-{F} & \Hom_{D^b(Y_{\ol{K}},\beta_{\ol{K}})}(F(a_{\ol{K}}),F(b_{\ol{K}}))\\
\Hom_{D^b(X_L,\alpha_L)}(a,b)\otimes_L \ol{K} \ar[r]_-{F_L\otimes_L \ol{K}}^-{\simeq} \ar[u]^{\simeq}&\Hom_{D^b(Y_L,\beta_L)}(F_L(a),F_L(b))\otimes_L \ol{K} \ar[u]^{\simeq}
}
\]
Since $\ol{K}$ is a flat over $L$, we obtain the claim.
\end{proof}
\begin{lemma}
Take smooth projective varieties $X$ and $Y_1$,...,$Y_j$ over $K$ and Azumaya algebra $\alpha_e$ on $Y_e$ for each $e$. If there is a semi-orthogonal decomposition 
\[
D^b(X_{\ol{K}}) \overset{\text{SOD}}{\simeq} <D^b(Y_{1,\ol{K}}, \alpha_{1,\ol{K}}),....,D^b(Y_{j,\ol{K}}, \alpha_{j,\ol{K}})>,
\]
then there is a finite extension $L/K$ and a full subcategory $T\hookrightarrow D^b(X_{L})$ such that there is a semi-orthogonal decomposition 
\[
T \overset{\text{SOD}}{\simeq} <D^b(Y_{1,L}, \alpha_{1,L}),....,D^b(Y_{j,L}, \alpha_{j,L})>
\]
\end{lemma}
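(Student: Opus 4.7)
The plan is to descend, one at a time, the fully faithful inclusion functors constituting the hypothesized semi-orthogonal decomposition over $\ol{K}$, take a common finite extension $L$ over which all of them are defined, and then set $T \subseteq D^b(X_L)$ to be the triangulated subcategory generated by their images. Unpacking the hypothesis, we are given, for each $e = 1,\dots,j$, a $\ol{K}$-linear fully faithful triangulated functor $F_e : D^b(Y_{e,\ol{K}}, \alpha_{e,\ol{K}}) \to D^b(X_{\ol{K}})$ together with the vanishings $\Hom(F_e(a), F_{e'}(b)) = 0$ for all $e > e'$ and all objects $a,b$.

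First I would apply Proposition~\ref{propprop} to each $F_e$ separately, obtaining a finite extension $L_e / K$ and an $L_e$-linear fully faithful functor $F_{e,L_e} : D^b(Y_{e,L_e}, \alpha_{e,L_e}) \to D^b(X_{L_e})$ whose base change to $\ol{K}$ recovers $F_e$. Taking $L$ to be a finite extension of $K$ containing every $L_e$ inside $\ol{K}$ and pulling each $F_{e,L_e}$ back along the flat morphism $\Spec L \to \Spec L_e$ --- in the style of diagram~\eqref{TAKE2} --- yields $L$-linear fully faithful functors $F_{e,L} : D^b(Y_{e,L}, \alpha_{e,L}) \to D^b(X_L)$. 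Preservation of full faithfulness under the further base change $L_e \to L$ is the same Hom-descent argument used at the end of the proof of Proposition~\ref{propprop}: faithful flatness of $\ol{K}/L$ combined with the identification $\Hom(F_{e,L}(a), F_{e,L}(b)) \otimes_L \ol{K} \simeq \Hom(F_e(a_{\ol{K}}), F_e(b_{\ol{K}})) \simeq \Hom(a,b) \otimes_L \ol{K}$.

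Next I would check semi-orthogonality of the image subcategories inside $D^b(X_L)$. For $e > e'$ and any objects $a$, $b$, the same flat base-change identification of Hom spaces gives
\[
\Hom(F_{e,L}(a), F_{e',L}(b)) \otimes_L \ol{K} \simeq \Hom(F_e(a_{\ol{K}}), F_{e'}(b_{\ol{K}})) = 0,
\]
and faithful flatness of $\ol{K}/L$ then forces $\Hom(F_{e,L}(a), F_{e',L}(b)) = 0$. Defining $T \subseteq D^b(X_L)$ to be the smallest thick triangulated subcategory containing every image $F_{e,L}(D^b(Y_{e,L}, \alpha_{e,L}))$, the combination of full faithfulness of each $F_{e,L}$, the pairwise $\Hom$-vanishings just established, and the fact that $T$ is generated by those images is exactly the data of a semi-orthogonal decomposition $T \simeq \langle D^b(Y_{1,L}, \alpha_{1,L}), \ldots, D^b(Y_{j,L}, \alpha_{j,L})\rangle$ of the form demanded by the statement.

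The main obstacle is the descent of each individual fully faithful functor $F_e$, which is precisely the content of Proposition~\ref{propprop} and rests on the twisted Fourier--Mukai representation (Theorem~\ref{twistedreepresentation}) together with the compatibility of $\otimes^{\mathbb{L}}$, flat pullback and projective pushforward on twisted sheaves with flat base change (Lemmas~\ref{5.4} and~\ref{5.5}). Once those are in hand, semi-orthogonality of the descended functors is a purely formal faithfully flat descent of Hom spaces, and the fact that $T$ is not claimed to equal $D^b(X_L)$ is already accommodated by the statement of the lemma, so no further work is needed to conclude.
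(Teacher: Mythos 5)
Your proposal is correct and follows essentially the same route as the paper: descend each fully faithful inclusion via Proposition~\ref{propprop}, pass to a common finite extension, and verify the semi-orthogonality $\Hom$-vanishings by faithfully flat base change of Hom spaces. Your write-up is in fact slightly more careful than the paper's, since you explicitly take a compositum of the extensions $L_e$ and re-verify full faithfulness after the further base change $L_e \to L$, a point the paper's proof leaves implicit.
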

\begin{proof}
By Proposition~\ref{propprop}, there is a $L$-linear fully faithful functor $F_{e,L}:D^b(Y_{e,L}, \alpha_{e,L}) \hookrightarrow D^b(X_L)$. Let us prove that, for for all ${\displaystyle 1\leq i<l\leq e}$ and all objects $a\in D^b(Y_{l,L}, \alpha_{l,L})$ and $b\in  D^b(Y_{i,L}, \alpha_{i,L})$:
\[
\Hom_{D^b(X_L)}(F_{l,L}(a),F_{i,L}(b))=0
\]
By the base change theorem of coherent sheaves, this equation follows from the equation
\[
\Hom_{D^b(X_{\ol{K}})}((F_{l,L}(a))_{\ol{K}},(F_{i,L}(b))_{\ol{K}})=\Hom_{D^b(X_{\ol{K}})}(F_l(a_{\ol{K}}),F_i(b_{\ol{K}}))=0.
\]
We obtain the claim.
\end{proof}

\subsection{}Let $Q$ be the inverse limit of the inverse system $(Q_\lambda,{u_{\lambda}}_{\mu})$ in the category of schemes. We assume that the transition ${u_{\lambda}}_{\mu}:Q_{\lambda}\to Q_{\mu}$ is finite for any $\lambda < \mu$. Suppose that $Q_\lambda$ is quasi-compact and quasi-separated. In this case, for any coherent sheaf $\sF$ on $\lim Q_\lambda$ there is a $\lambda$ and a coherent sheaf $\sF_{\lambda}$ on $Q_{\lambda}$ such that $\sF \simeq u_{\lambda}^* \sF_{\lambda}$ where $u_{\lambda}$ is the natural map $Q \to Q_{\lambda}$ (see \cite[Corollaire  (8.5.2.4).]{EGA43}).

\begin{defn}
Let $X$ be a quasi-compact and quasi-separated scheme, $S$ be a countable set. For objects $\sE_i\in D^b(X)$ $i\in S$ and a set $\{\sE_i\}_{i\in S}$, we define the sub category $<\sE_i|i \in S>$ for the smallest sub triangulated category containing $E_{i}$ and closed under finite coproducts, shift and cone. We say $\{\sE_i\}_{i\in S}$ is a generating set of $D^b(X)$ iff $D^b(X)=<\sE_i|i \in S>$.
\end{defn}

\begin{lemma}\label{poporo}
Let $f: X \to Y$ be a morphism of projective varieties over a field $k$. We assume $Y$ is smooth over $k$. Consider objects $\sE_i\in D^b(Y)$ $i \in S$. If an object $E\in  D^b(Y)$ is in $<\sE_i|i \in S>$, then the object $Lf^* E$ is in $<Lf^*\sE_i|i \in S>$.
\end{lemma}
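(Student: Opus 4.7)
The approach I would take is a standard ``dévissage'' via the universal property that defines $\langle \sE_i \mid i \in S\rangle$. Concretely, I would introduce the full subcategory
\[
\sT := \{F \in D^b(Y) \mid Lf^*F \in \langle Lf^*\sE_i \mid i \in S\rangle\} \subseteq D^b(Y),
\]
and show that $\sT$ contains each $\sE_i$ and is closed under shifts, finite coproducts, and cones. Since $\langle \sE_i \mid i \in S\rangle$ is by definition the smallest triangulated subcategory of $D^b(Y)$ with these closure properties containing the $\sE_i$, the inclusion $\langle \sE_i \mid i \in S\rangle \subseteq \sT$ then follows, which is exactly the statement of the lemma.

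The containment $\sE_i \in \sT$ is immediate. For the closure properties, what needs to be verified is that $Lf^* \colon D^b(Y) \to D^b(X)$ is a well-defined triangulated functor that preserves finite coproducts. Here I would invoke the hypothesis that $Y$ is smooth over $k$: every coherent sheaf on $Y$ then admits a finite locally free resolution, so $f^*$ has finite Tor-dimension, and $Lf^*$ preserves bounded complexes. As a left derived functor it is triangulated, so it sends distinguished triangles to distinguished triangles, and in particular commutes with shifts and with $\Cone$. As a left adjoint to $Rf_*$, it commutes with all coproducts, hence with finite ones. Granted these formal properties, closure of $\sT$ under shifts, finite coproducts, and cones is automatic: if $F, G \in \sT$ and $h \colon F \to G$ is a morphism in $D^b(Y)$, then $Lf^*\Cone(h) \simeq \Cone(Lf^*h)$ belongs to $\langle Lf^*\sE_i \mid i \in S\rangle$ because the latter is itself closed under cones, and likewise for shifts and finite direct sums.

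I do not foresee any serious obstacle: the argument is a purely formal triangulated-categorical dévissage. The only substantive input is the smoothness of $Y$, which is precisely what guarantees that $Lf^*$ is defined on bounded derived categories; without smoothness one would be forced to restrict to perfect complexes, or to pass to $D^-$, and the conclusion in the stated form could fail. Since the excerpt already assumes $Y$ smooth projective, this hypothesis is in hand and the proof reduces to writing down the verifications above.
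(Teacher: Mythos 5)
Your argument is correct and is exactly the paper's: the proof in the text simply observes that $Lf^*$ preserves finite coproducts, shifts, and cones, which is the dévissage you spell out via the subcategory $\sT$. Your additional remark that smoothness of $Y$ is what keeps $Lf^*$ within bounded derived categories is a useful elaboration of a point the paper leaves implicit, but the route is the same.
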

\begin{proof}
The claim follows from the fact that the functor $Lf^*$ preserves finite coproducts, shift and cone.
\end{proof}

\begin{lemma}\label{Naruse}
For a projective smooth variety $X$ over a field $k$, we fix a very ample divisor $\sO_X(1)$. Then the set $\{\sO_X(i)| 0 \geq i \geq -m\}$ is a generating set of $D^b(X)$ where $m=\dim_k H^0(X,\sO_X(1))-1$.
\end{lemma}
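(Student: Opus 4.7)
The plan is to combine Beilinson's resolution on $\PP^m$ with a Koszul-complex argument, exploiting the closed immersion $i\colon X \hookrightarrow \PP^m = \PP(H^0(X,\sO_X(1))^{\vee})$ induced by the very ample line bundle $\sO_X(1)$ (so that $i^*\sO_{\PP^m}(1) = \sO_X(1)$).

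First I will show that every twist $\sO_X(k)$ with $k \in \Z$ already belongs to $\langle \sO_X(i) \mid 0 \geq i \geq -m\rangle$. The Koszul complex on $\PP^m$ built from the $m+1$ homogeneous coordinates $x_0, \dots, x_m \in H^0(\PP^m, \sO_{\PP^m}(1))$ is the exact sequence
\[
0 \to \sO_{\PP^m}(-m-1) \to \sO_{\PP^m}(-m)^{\oplus\binom{m+1}{m}} \to \cdots \to \sO_{\PP^m}(-1)^{\oplus(m+1)} \to \sO_{\PP^m} \to 0,
\]
which is exact because the $x_i$ have no common zero. Since every term is locally free (hence flat), pulling back along $i$ preserves exactness and yields the analogous exact sequence of line bundles on $X$. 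Viewed as a quasi-isomorphism in $D^b(X)$, this realizes $\sO_X(-m-1)$ as an iterated cone built from $\sO_X, \sO_X(-1), \dots, \sO_X(-m)$; twisting by $\sO_X(k)$ for any $k \in \Z$ and iterating on $|k|$ gives $\sO_X(k) \in \langle \sO_X(i) \mid 0 \geq i \geq -m\rangle$ for every $k \in \Z$.

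Second, for any $\sF \in D^b(X)$ the pushforward $Ri_*\sF \in D^b(\PP^m)$ lies in $\langle \sO_{\PP^m}(-j) \mid 0 \leq j \leq m\rangle$ by the Beilinson resolution of the diagonal on $\PP^m \times \PP^m$: it is realized as an iterated cone of finite direct sums of shifts of $\sO_{\PP^m}(-j) \otimes_k R\Gamma(X, \sF \otimes i^*\Omega^j_{\PP^m}(j))$ for $0 \leq j \leq m$. Applying $Li^*$ and invoking Lemma~\ref{poporo} gives $Li^*Ri_*\sF \in \langle \sO_X(-j) \mid 0 \leq j \leq m\rangle$. To recover $\sF$ itself, I use the counit $Li^*Ri_*\sF \to \sF$: since $i$ is a regular embedding of smooth varieties of codimension $c = m - \dim X$, the object $Li^*i_*\sO_X$ carries a Koszul-type filtration whose associated graded is $\bigoplus_{k=0}^{c} \bigwedge^k N_{X/\PP^m}^{\vee}[k]$. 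The fibre of the counit therefore admits a finite filtration whose subquotients are of the form $\sF \otimes \bigwedge^k N_{X/\PP^m}^{\vee}[k]$ with $1 \leq k \leq c$, each again a shifted coherent sheaf on $X$. Iterating the argument of this paragraph on every such subquotient---a recursion that terminates in at most $c$ steps because the codimension is finite---membership in $\langle \sO_X(-j) \mid 0 \leq j \leq m\rangle$ propagates through the filtration by repeated applications of the octahedral axiom, yielding $\sF \in \langle \sO_X(-j) \mid 0 \leq j \leq m\rangle$.

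The main obstacle lies in controlling the Koszul-type filtration on $Li^*i_*\sF$ in a characteristic-free manner, since the clean splitting $Li^*i_*\sO_X \simeq \bigoplus_k \bigwedge^k N_{X/\PP^m}^{\vee}[k]$ is available only in characteristic zero; in positive characteristic one must work with the filtration itself and verify that each subquotient lands in the target subcategory. An alternative and arguably cleaner route is to build a bounded line-bundle resolution of $\sF$ on $X$ directly by combining the Koszul complex of the first step (twisted by $\sF$) with Serre's global-generation theorem, terminating via the finiteness of the cohomological dimension of $X$.
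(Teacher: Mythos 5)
Your first step --- pulling back the Koszul complex of the coordinates on $\PP^m$ to realize $\sO_X(-l)$ for $l>m$ as an iterated cone on $\sO_X,\dots,\sO_X(-m)$ --- is correct, and it is essentially the reduction the paper performs (the paper phrases it as $\sO_{\PP^m}(-l)\in\langle\sO_{\PP^m},\dots,\sO_{\PP^m}(-m)\rangle$ on $\PP^m$ followed by Lemma~\ref{poporo}; your Koszul complex makes the same relation explicit). The gap is in your second step. After establishing $Li^*Ri_*\sF\in\langle\sO_X(-j)\mid 0\le j\le m\rangle$, you try to recover $\sF$ from the counit $Li^*Ri_*\sF\to\sF$, whose fibre is filtered with subquotients $\sF\otimes\bigwedge^{k}N^{\vee}_{X/\PP^m}[k]$, $1\le k\le c$. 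Iterating the argument on these subquotients does \emph{not} terminate in $c$ steps: at stage $r$ the error terms involve $r$-fold tensor products $\sF\otimes\bigwedge^{k_1}N^{\vee}\otimes\cdots\otimes\bigwedge^{k_r}N^{\vee}$ with all $k_i\ge 1$, and a tensor product of exterior powers is not an exterior power of higher degree, so these do not vanish for $r>c$. Only the homological shifts grow, and since the subcategory $\langle\sE_i\mid i\in S\rangle$ as defined in the paper is closed only under finite coproducts, shifts and cones (not under summands or any completion), "the error is concentrated in ever lower degrees" does not let you conclude. As written the recursion is circular.

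Two standard ways to close the gap. The paper's route: invoke the classical fact that $\{\sO_X(-l)\mid l\in\N\cup\{0\}\}$ already generates $D^b(X)$ (the d\'evissage of EGA III, which is essentially your own "alternative route" via Serre's theorem and finiteness of cohomological dimension), and then use your first step to collapse the index range to $0\ge i\ge -m$; this avoids the counit entirely. Alternatively, if you insist on the Beilinson resolution, apply it as a Fourier--Mukai kernel on $X\times\PP^m$ rather than on $\PP^m\times\PP^m$: the Koszul resolution of the diagonal restricts to a resolution of $\sO_{\Gamma_i}$ on $X\times\PP^m$ (the restriction of the regular section cutting out $\Delta_{\PP^m}$ remains regular there), and convolving directly exhibits $\sF$ itself, not $Li^*Ri_*\sF$, as an iterated cone of objects $\sO_X(-j)\otimes_k V_j^{\bullet}$. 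Either repair yields the lemma; your characteristic-$p$ worry about splitting $Li^*i_*\sO_X$ is then moot, since neither argument needs that splitting.
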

\begin{proof}
The set $\{\sO_X(-i)|i\in \N\cup \{0\}\}$ is a generating set of $D^b(X)$ (see \cite[Théorème  (2.2.1) (iii)]{EGA31}). Consider the closed immersion $f:X\to \PP^m$ corresponding to the very ample divisor $\sO_X(1)$. It is well known that the set $\{\sO_{\PP^m}, \sO_{\PP^m}(-1),..., \sO_{\PP^m}(-m)\}$ is a generating set of $D^b(\PP^m)$. For any natural number $i$, we have an isomorphism in $D^b(X)$: \[
Lf^*\sO_{\PP^m}(-i) \simeq f^*\sO_{\PP^m}(-i) \simeq \sO_X(-i).
\]
Since $<\sO_{\PP^m}, \sO_{\PP^m}(-1),..., \sO_{\PP^m}(-m)>= D^b(\PP^m)$, for any $l > m$, $\sO_{\PP^m}(-l)\in <\sO_{\PP^m}, \sO_{\PP^m}(-1),..., \sO_{\PP^m}(-m)>$. Thus we obtain the following:
\[
\sO_X(-l) \in <\sO_X,\sO_X(-1),..,\sO_X(-m)>.
\]
Thus by Lemma~\ref{poporo} we obtain the claim.
\end{proof}

For a projective variety $X$ over a field $K$ and a finite extension $L/K$, we write $X_L=X\otimes_K L$ and we denote the morphism $X_{\ol{K}} \to X_{L}$ by $f_L$.

\begin{lemma}\label{megane}
For a morphism $r: \sF \to \sG \in D^b(X_{\ol{K}})$, there is a finite extension $M/K$ and a morphism $r_M:\sF_M \to \sG_M \in D^b(X_M)$ such that $r\simeq f_M^*r_M$.
\end{lemma}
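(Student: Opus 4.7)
The plan is a standard spreading-out / descent argument via noetherian approximation, in the same spirit as the application of EGA IV.3 Corollaire (8.5.2.4) invoked in the previous subsection. Write $\ol{K} = \colim_{L/K \text{ finite}} L$, so that $X_{\ol K} = \lim X_L$ is a filtered inverse limit of noetherian schemes along affine transition maps. The goal is to upgrade the termwise descent of coherent sheaves to a descent of bounded complexes together with their chain maps, and then to descend a roof representing $r$ in the derived category.

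First, I would represent $\sF$ and $\sG$ by bounded complexes of coherent sheaves $\sF^\bullet,\sG^\bullet$ on $X_{\ol K}$. Applying (8.5.2.4) to each of the finitely many terms $\sF^i,\sG^i$ yields coherent descents to $X_L$ for some common finite extension $L/K$. The differentials $d^i$ and the relations $d^{i+1}\circ d^i = 0$ are global sections of coherent $\uHom$-sheaves on $X_L$, and by the companion statement to (8.5.2.4) one has
\[
\Hom_{X_{\ol K}}(\sF^i_{\ol K},\sF^{i+1}_{\ol K}) \;=\; \colim_{M\supseteq L}\Hom_{X_M}(\sF^i_M,\sF^{i+1}_M),
\]
so each differential, and then each cocycle identity, is realised after a further finite enlargement of $L$. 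Thus $\sF$ and $\sG$ descend to bounded complexes $\sF_L^\bullet,\sG_L^\bullet$ in $\Comp^b(\text{Coh}(X_L))$.

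Next, I would represent $r$ by a roof $\sF \xleftarrow{s} \sH \xrightarrow{t} \sG$ of bounded complexes with $s$ a quasi-isomorphism, and run the same two-step descent on $\sH$, $s$ and $t$. After passing to some larger finite extension $M/K$ this produces a diagram $\sF_M \xleftarrow{s_M} \sH_M \xrightarrow{t_M} \sG_M$ in $\Comp^b(\text{Coh}(X_M))$ whose base change along $f_M$ reproduces the original roof. To see that $s_M$ is a quasi-isomorphism on $X_M$, note that $\ol{K}/M$ is faithfully flat, hence so is $X_{\ol K}\to X_M$; flat base change gives $H^i(\Cone(s_M))_{\ol K} \cong H^i(\Cone(s)) = 0$ for all $i$, and faithful flatness forces the coherent sheaves $H^i(\Cone(s_M))$ to vanish on $X_M$. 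Then $r_M := t_M \circ s_M^{-1}$ is a well-defined morphism in $D^b(X_M)$ with $f_M^* r_M \simeq r$.

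The main subtlety is the colimit interchange in the display above; once it is granted, the rest is mechanical. One needs only to iterate the principle ``any single equation between coherent sheaves or morphisms on $X_{\ol K}$ is realised already on some $X_L$'' a bounded number of times, which is legitimate because all complexes involved are bounded.
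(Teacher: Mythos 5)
Your argument is correct, but it takes a different route from the paper on the key step. The paper descends the objects $\sF,\sG$ to some $X_L$ (as you do, though it leaves the object-descent implicit, citing the limit formalism of the preceding subsection), and then handles the morphism by pure linear algebra: since $X$ is projective, $\Hom_{D^b(X_{\ol K})}(\sF,\sG)$ is a finite-dimensional $\ol K$-vector space, flat base change for coherent (hyper)cohomology gives $\Hom_{D^b(X_L)}(\sF_L,\sG_L)\otimes_L\ol K\simeq\Hom_{D^b(X_{\ol K})}(\sF,\sG)$, and any vector of $\ol K^{\oplus m}$ has coordinates in a finite subextension, so $r$ is hit by some $r_M$. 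You instead work at the chain level: spread out the complexes, represent $r$ by a roof, descend the roof term by term via the colimit formula for $\Hom$ of coherent sheaves, and verify that the left leg stays a quasi-isomorphism by faithfully flat descent of the cohomology of the cone. Both are valid. The paper's argument is shorter and exploits properness in an essential way (finite-dimensionality of the derived Hom); yours is more laborious but more robust --- it does not need the Hom spaces to be finite-dimensional and it makes the object-descent step explicit rather than assumed. The only points you should be careful to spell out are that the compatibility isomorphisms $f_M^*\sF_M\simeq\sF$, $f_M^*\sG_M\simeq\sG$ can be chosen so that the pulled-back roof is identified with the original one (another finite list of equations realised at a finite level), and that roofs with a coherent bounded middle term exist, which follows from the calculus of fractions for $D^b(\mathrm{Coh})$; neither is a gap, just bookkeeping.
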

\begin{proof}
There is a finite extension $L/K$ and objects $\sF_L,\sG_L \in D^b(X_L)$ such that $f_L^*\sF_L \simeq \sF$ and $f_L^*\sG_L \simeq \sG$. Suppose that $Hom_{D^b(X_{\ol{K}})}(\sF,\sG)\simeq \ol{K}^{\oplus m}$ for some $m \in \N$. By the flat base change theorem for coherent cohomology, there is an isomorphism of $L$-vector spaces:
\[
Hom_{D^b(X_{L})}(\sF_L,\sG_L)\simeq L^{\oplus m}.
\]
For a vector $r\in Hom_{D^b(X_{\ol{K}})}(\sF,\sG)= \ol{K}^{\oplus m}$, there is finite extension $N/K$ such that $r \in N^{\oplus m}$. Choose a finite extension $M/K$ containing $N$ and $L$. If we take $r_M\in Hom_{D^b(X_M)}(\sF_M,\sG_M)\simeq M^{\oplus m}$ corresponding to $r \in M^{\oplus m}$, we obtain the claim.
\end{proof}

\begin{lemma}\label{RUNA}
Consider a smooth projective variety $X$ over a field $K$, and full subcategories $E_i: T_i {\hookrightarrow} D^b(X)$ for $1 \leq i \leq j$. We assume that $D^b(X_{\ol{K}})$ is generated by $f_K^*E_i(T_i)$ i.e, it is the smallest subcategory of $D^b(X_{\ol{K}})$ which contains all of the objects in $f_K^*E_i(T_i)$ for any $1\leq i \leq e$ and is closed under finite coproducts, cone and shifts, where $f_K$ is the natural map $X_{\ol{K}} \to X$. Then there is a finite extension $L/K$ such that $D^b(X_{L})$ is generated by $f_{L/K}^*E_i(T_i)$, where $f_{L/K}$ is the natural map $X_{L} \to X$.
\end{lemma}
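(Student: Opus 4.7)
The plan is to use Lemma~\ref{Naruse} to reduce the problem to finitely many specific target objects of $D^b(X_L)$, and then descend an $\ol{K}$-level construction of each target along a finite extension by means of Lemma~\ref{megane}.

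First, fix once and for all a very ample invertible sheaf $\sO_X(1)$ on $X$. By Lemma~\ref{Naruse} applied to the smooth projective variety $X_L$ with the very ample divisor $f_{L/K}^*\sO_X(1)$, the set $\{\sO_{X_L}(-k) = f_{L/K}^*\sO_X(-k)\}_{0 \leq k \leq m}$ is a generating set of $D^b(X_L)$, where $m = \dim_L H^0(X_L,\sO_{X_L}(1)) - 1 = \dim_K H^0(X,\sO_X(1)) - 1$ is independent of $L$ by flat base change of coherent cohomology. Consequently, it suffices to produce, for each fixed $k$ in this range, a finite extension $L_k/K$ with $\sO_{X_{L_k}}(-k) \in \langle f_{L_k/K}^*E_j(T_j) \mid 1 \leq j \leq e \rangle$ in $D^b(X_{L_k})$; any single finite extension $L/K$ containing all the $L_k$ will then realize the conclusion.

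Fix $k$. By hypothesis, $\sO_{X_{\ol{K}}}(-k) = f_K^*\sO_X(-k)$ belongs to $\langle f_K^*E_j(T_j) \mid j\rangle$. Unwinding the inductive definition of the generated subcategory, this means that $\sO_{X_{\ol{K}}}(-k)$ is produced in finitely many steps from finitely many objects of the form $f_K^*e_\alpha$, with $e_\alpha \in E_{j(\alpha)}(T_{j(\alpha)}) \subset D^b(X)$, by shifts, finite coproducts, and cones of morphisms. I would descend this construction tree inductively on its height. The leaves $f_K^*e_\alpha$ are already defined over $K$; shifts and finite coproducts of objects defined over some intermediate finite extension remain defined over the same extension; and for a cone operation applied to a morphism $r: A_{\ol{K}} \to B_{\ol{K}}$ whose source and target are defined over some finite $L'/K$, Lemma~\ref{megane} (applied to the projective variety $X_{L'}$ over $L'$ in place of $X/K$) descends $r$ to a morphism $r_M: A_M \to B_M$ in $D^b(X_M)$ for some finite $M/L'$, and $\Cone(r_M)$ base-changes to $\Cone(r)$ because the pullback $D^b(X_M) \to D^b(X_{\ol{K}})$ is a triangulated functor. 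After finitely many such descents one arrives at a finite extension $L_k'/K$ and an object $O \in \langle f_{L_k'/K}^*E_j(T_j)\mid j\rangle \subset D^b(X_{L_k'})$ together with an isomorphism $\phi : f_{L_k'}^*O \iso \sO_{X_{\ol{K}}}(-k) = f_{L_k'}^*\sO_{X_{L_k'}}(-k)$.

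It remains to descend $\phi$. Exactly as in the concluding step of Proposition~\ref{propprop}, flat base change for $\Hom$ of coherent sheaves, combined with the presentation $\ol{K} = \colim L''$ over finite extensions $L''/L_k'$, shows that $\phi$ is the image of an isomorphism defined over some finite $L_k/L_k'$. Thus $\sO_{X_{L_k}}(-k) \simeq f_{L_k/L_k'}^*O$, which lies in $\langle f_{L_k/K}^*E_j(T_j)\mid j\rangle$, completing the argument. The main obstacle is the bookkeeping required to descend an entire construction tree rather than a single morphism, but this reduces cleanly to repeated application of Lemma~\ref{megane} together with the compatibility of base change with cone formation, so no new ingredient is needed beyond the tools already established.
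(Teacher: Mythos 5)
Your proposal is correct and follows essentially the same route as the paper: both reduce via Lemma~\ref{Naruse} to the finitely many line bundles $\sO_{X_{\ol{K}}}(-l)$, $0\le l\le m$, and both descend each one by observing that the property ``defined over a finite extension and lying in the subcategory generated there'' is stable under shifts, finite coproducts, and (via Lemma~\ref{megane}) cones. The paper packages your ``construction tree'' induction as closure of its condition $(1)$, and handles the final identification $H_l\simeq\sO_{X_{M_l}}(-l)$ by faithful flatness of $f_{M_l}$, exactly as in your last paragraph.
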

\begin{proof}
For an object $G$ in $D^b(X_{\ol{K}})$ we consider the following condition $(1)$:
\begin{itemize}
    \item[($1$):]{There is a finite extension $M/K$ and an object $G_M \in D^b(X_M)$ such that $f_M^* G_M\simeq G$, and $G_M\in <f_{M/K}^*E_{1}(T_1),f_{M/K}^*E_{2}(T_2),...,f_{M/K}^*E_{n}(T_e)>$.}  
\end{itemize}
All objects in $f_K^*E_i(T_i)$ satisfy $(1)$. If an object $G \in D^b(X_{\ol{K}})$ satisfies $(1)$, then $G[l]$ satisfies $(1)$ for any $l\in \Z$. If objects $G_1,..,G_l \in D^b(X_{\ol{K}})$ satisfy $(1)$, then $\bigoplus G_i$ satisfies $(1)$. For a distinguished triangle in $D^b(X_{\ol{K}})$
\[
G_1 \to G_2 \to G_3 \overset{+}{\to} G_1[1],
\]
    if $G_1$ satisfies $(1)$ and $G_2$ satisfies $(1)$, then by Lemma~\ref{megane} $G_3$ satisfies $(1)$. Thus the condition $(1)$ is closed under any finite coproducts, cone and shift. We fix an immersion $X \hookrightarrow \PP^m_K$. For any finite extension $M/K$, there is a Cartesian diagram:
    \[
    \xymatrix{
    X_{\ol{K}} \ar[d]_{f_M}\ar@{^{(}-_>}[r]^-{\ol{i}} & \PP^m_{\ol{K}}\ar[d]\\
    X_M \ar[d]_{f_{M/K}} \ar@{^{(}-_>}[r]^-{i_M}& \PP^m_K\ar[d]    \\
    X \ar@{^{(}-_>}[r]^-{i}& \PP^m_K
    }
    \]
Let us denote $\sO_{\ol{X}}(1)$ the very ample divisor given by $\ol{i}$ and $\sO_{X_M}(1)$ the very ample divisor given by ${i_M}$. We note that there is a natural isomorphism $\sO_{\ol{X}}(i) \simeq f_M^*\sO_{X_M}(i)$ for any $i\in\Z$. For any $m\geq l \geq 0$, $\sO(X_{\ol{K}})(-l)$ satisfies the condition $(1)$, there is a finite extension $M_l/K$ and and an object $H_l$ such that $H_l$ is in $<f_{M_l/K}^*E_1(T_1),...,f_{M_l/K}^*E_n(T_n)>$ and $\sO(X_{\ol{K}})(-l) \simeq f_{M_l}^*H_l$. Since $f_{M_l}$ is faithfully flat, we know $H_l \simeq \sO_{M_{l}}(-l)$. %
Choose a finite extension $L$ containing all $M_l$ over $K$ then $\sO_L(-l) \in <f_{L/K}^*E_1(T_1),...,f_{L/K}^*E_n(T_n)>$ for any $m\geq l\geq 0$. Since we know $D^b(X_L) =<\sO_{X_L},\sO_{X_L}(-1),...,\sO_{X_L}(-m)>$, we have $D^b(X_L)= <f_{L/K}^*E_1(T_1),...,f_{L/K}^*E_n(T_n)>$.
\end{proof}

Theorem~\ref{5.2} follows from Proposition \ref{propprop} and Lemma \ref{RUNA}. We note the following Lemma.

\begin{lemma}\label{5.15}
For an extension of fields $L/K$ and a projective variety $X/K$, given full subcategories $E_i:T_j \hookrightarrow D^b(X)$ for $1 \leq j \leq i$ such that there is a semi-orthogonal decomposition
\[
D^b(X) \overset{\text{SOD}}{\simeq} <E_1(T_1),...,E_i(T_i)>
\]
then there is a semi-orthogonal decomposition
\[
D^b(X_L) \overset{\text{SOD}}{\simeq} <f_{L/K}^*E_1(T_1),...,f_{L/K}^*E_i(T_i)>,
\]
where $f_{L/K}$ is a natural morphism $X_L\to X$.
\end{lemma}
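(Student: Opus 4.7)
The plan is to verify directly the two defining properties of a semi-orthogonal decomposition for the subcategories $f_{L/K}^* E_j(T_j) \subset D^b(X_L)$: namely the semi-orthogonality between the pieces, and the generation of $D^b(X_L)$ by their union. The whole argument is essentially ``base change preserves SOD'', which splits cleanly into a Hom computation and a generator computation, both of which have already been isolated as Lemmas earlier in this section.

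For the semi-orthogonality, I would invoke flat base change for coherent cohomology: since $\Spec L \to \Spec K$ is flat and $X$ is smooth projective over $K$ (so every object of $D^b(X)$ is perfect, making $Lf_{L/K}^* = f_{L/K}^*$), for any $j > j'$ and $a \in T_j$, $b \in T_{j'}$ one has a canonical isomorphism
\[
\Hom_{D^b(X_L)}\bigl(f_{L/K}^* E_j(a),\, f_{L/K}^* E_{j'}(b)\bigr) \simeq \Hom_{D^b(X)}\bigl(E_j(a),\, E_{j'}(b)\bigr)\otimes_K L,
\]
and the right-hand side vanishes by the assumed semi-orthogonal decomposition over $K$.

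For the generation, the key input is Lemma~\ref{Naruse}: fix a very ample sheaf $\sO_X(1)$ giving an embedding $X \hookrightarrow \PP^m_K$ with $m = \dim_K H^0(X, \sO_X(1)) - 1$; the collection $\{\sO_X(-l)\}_{0\leq l\leq m}$ generates $D^b(X)$. Base-changing the same embedding along $\Spec L \to \Spec K$ gives $X_L \hookrightarrow \PP^m_L$ with the same $m$, so $\{\sO_{X_L}(-l)\}_{0\leq l\leq m} \simeq \{f_{L/K}^* \sO_X(-l)\}_{0\leq l\leq m}$ generates $D^b(X_L)$. By hypothesis each $\sO_X(-l)$ lies in $\langle E_1(T_1), \ldots, E_i(T_i)\rangle$, and since $f_{L/K}^*$ preserves shifts, cones, and finite coproducts (Lemma~\ref{poporo}), each $\sO_{X_L}(-l)$ lies in $\langle f_{L/K}^* E_1(T_1), \ldots, f_{L/K}^* E_i(T_i)\rangle$, yielding the desired generation.

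There is no real obstacle here; the only point that requires a small sanity check is that the flat base change isomorphism above is genuinely functorial in both arguments and commutes with the embeddings $E_j$, but this is immediate from the fact that $E_j$ is just a fully faithful inclusion. If one wanted to allow $L/K$ infinite one should also note that $X_L$ remains noetherian (automatic from $X$ being of finite type over $K$), so $D^b(\mathrm{Coh}(X_L))$ behaves as expected and the two inputs above apply verbatim.
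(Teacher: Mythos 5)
Your proof is correct and the generation step is exactly the paper's argument: reduce to the finite set of generators $\sO_X(-l)$, $0\leq l\leq m$ (Lemma~\ref{Naruse}), note each lies in $\langle E_1(T_1),\dots,E_i(T_i)\rangle$, and push through $f_{L/K}^*$ via Lemma~\ref{poporo}. You additionally spell out the semi-orthogonality of the pulled-back pieces via flat base change for $\Hom$'s (using perfectness on the smooth $X$), a point the paper's proof leaves implicit, so your write-up is if anything slightly more complete than the original.
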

\begin{proof}
There is some $m\in\N$ such that $D^b(X)$ is generated by $\sO_X(-l)$ for $0 \leq l \leq m$ and also $D^b(X_L)$ is generated by $\sO_{X_L}(-l)$ for $0 \leq l \leq m$. Since each $\sO_X(-l)$ is in $<E_1(T_1),...,E_i(T_i)>$, the claim follows from Lemma~\ref{poporo}.
\end{proof}
\section{The application to number theory}
\subsection{} In this section, we first recall the result of Fontaine-Messing on $p$-adic Hodge theory over unramified base, and the result of Achinger on ordinary reduction. 

We let $k$ be a finite field of characteristic $p>0$ and we write $W$ for the Witt ring of $k$ and $K$ for the fraction field of $W$. For a smooth proper variety $X$ over $K$ of good reduction, we write $\mathfrak{X}$ for a smooth proper integral model of $X$ over $W$, and $X_k$ for the special fiber of $\mathfrak{X}$. If $i<p-1$, in \cite{FM87} Fontaine-Messing proved that there is an isomorphism of $W$-modules (see \cite[Theorem A]{FM87} or \cite[Theorem 0.9]{Min21})
\begin{equation}\label{fontaine-Messing}
H^i_{\text{crys}}(X_k/W) \simeq H^i_{\acute{e}t}(X_{\widehat{\ol{K}}},\Z_p) \otimes_{\Z_p} W. 
\end{equation}
where $\widehat{\ol{K}}$ is the completion of $\ol{K}$.

We let $F$ be a complete discrete valuation field of mixed characteristic $(0,p)$, and we write $k$ be the residue field of $\sO$. For a smooth proper variety $\mathfrak{X}$ over $\sO$, we write $X_k$ for the special fiber and $X_{\widehat{\ol{F}}}$ for the geometric fiber over $\widehat{\ol{F}}$ which is the completion of $\ol{F}$ endowed with its unique absolute value extending the given absolute value $|\cdot|$ on $F$. In \cite{AchingerArxiv} Achinger proved the following theorem.
\begin{thm}[Achinger, Proposition 6.7 \cite{AchingerArxiv}]
Consider the following conditions:
\begin{itemize}
    \item[(1)] $X_k$ is ordinary.
    \item[(2)] The $p$-adic Galois representations $H^i(X_{\widehat{\ol{F}}}, \Q_p)$ are ordinary for all $i$.
\end{itemize}
Then $(1) \Longrightarrow (2)$. Moreover, $(2) \Longrightarrow (1)$ if $H^*_{\text{crys}}(X_k/W)$ is a free $W$-module and $H^i(\mathfrak{X},\Omega^j_{\mathfrak{X}/{\sO}})$ are free $\sO$-modules for all $i,j$.
\end{thm}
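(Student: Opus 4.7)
The plan is to use the crystalline--\'etale comparison isomorphism as a dictionary between Frobenius slopes on $H^i_{\text{crys}}(X_k/W)$ and the structure of the $p$-adic Galois representation $H^i_{\et}(X_{\widehat{\ol{F}}},\Q_p)$. Recall that ordinariness of $X_k$ is characterized (Bloch--Illusie--Raynaud) by the coincidence of the Newton polygon of Frobenius on each $H^i_{\text{crys}}(X_k/W)_{\Q}$ with the Hodge polygon $\sum_j j\cdot h^{i-j,j}$; equivalently $X_k$ is Hodge--Witt and the de Rham--Witt slope spectral sequence degenerates with $H^{i-j}(X_k,W\Omega^j)_{\Q}$ appearing as the pure slope-$j$ component. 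On the Galois side, ordinariness of $H^i_{\et}(X_{\widehat{\ol{F}}},\Q_p)$ means the existence of a $\mathrm{Gal}(\ol{F}/F)$--stable decreasing filtration whose $j$-th graded piece is the Tate twist $U_j(-j)$ of an unramified representation $U_j$ of rank $h^{i-j,j}$.

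For $(1) \Rightarrow (2)$, I would assume $X_k$ is ordinary. The slope decomposition
\[
H^i_{\text{crys}}(X_k/W)_{\Q} \simeq \bigoplus_{j} H^{i-j}(X_k,W\Omega^j)_{\Q}
\]
then holds with the $j$-th summand of pure slope $j$ and Frobenius acting as a unit times $p^j$. Applying the $\mathrm{Gal}(\ol{F}/F)$--equivariant crystalline comparison
\[
H^i_{\et}(X_{\widehat{\ol{F}}},\Q_p) \otimes_{\Q_p} B_{\text{crys}} \simeq H^i_{\text{crys}}(X_k/W) \otimes_W B_{\text{crys}},
\]
which respects Frobenius and the Hodge filtration on $B_{\text{crys}}$, one transports the slope filtration to the left-hand side. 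The resulting filtration on $H^i_{\et}(X_{\widehat{\ol{F}}},\Q_p)$ is Galois stable with graded pieces of the form $U_j(-j)$ with $U_j$ unramified, as may be read off from Frobenius eigenvalues on the associated Dieudonn\'e module; this is precisely ordinariness of the Galois representation.

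For $(2) \Rightarrow (1)$ I would reverse the argument, using the freeness hypotheses in an essential way. The ordinary Galois filtration on $H^i_{\et}(X_{\widehat{\ol{F}}},\Q_p)$ transports through the same comparison to a Frobenius--stable filtration on $H^i_{\text{crys}}(X_k/W)_{\Q}$ with each graded piece of pure slope $j$ and rank $h^{i-j,j}$. The freeness of $H^*_{\text{crys}}(X_k/W)$ and of $H^i(\mathfrak{X},\Omega^j_{\mathfrak{X}/\sO})$ is used to guarantee that this slope decomposition lifts integrally and is compatible with the Hodge filtration on de Rham cohomology. It then follows that the de Rham--Witt slope spectral sequence degenerates, each $H^{i-j}(X_k,W\Omega^j)$ is free of pure slope $j$, and by Illusie--Raynaud this is equivalent to ordinariness of $X_k$.

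The hard part will be the integral $p$-adic Hodge theory required in the converse direction. The rational crystalline comparison is classical, but to recover the integral slope decomposition and match it cleanly with the de Rham--Witt components, one must either restrict to the Fontaine--Laffaille range $i<p-1$ (as in the isomorphism \eqref{fontaine-Messing} cited from Fontaine--Messing) or deploy more refined tools such as Bhatt--Morrow--Scholze's $A_{\text{inf}}$--cohomology or the prismatic cohomology of Bhatt--Scholze. The freeness assumptions in $(2) \Rightarrow (1)$ are precisely what allow such integral comparison results to be invoked without torsion pathologies polluting the slope/Hodge matching, and they are the essential technical hypotheses that make the converse go through.
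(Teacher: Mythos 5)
This statement is not proved in the paper at all: it is quoted verbatim from Achinger with the citation ``Proposition 6.7 \cite{AchingerArxiv}'', so there is no in-paper argument to compare yours against. Judged on its own, your sketch follows the standard (and, as far as I can tell, the intended) route: Bloch--Kato/Illusie--Raynaud's characterization of ordinarity via Newton and Hodge polygons on one side, Perrin-Riou's dictionary between ordinary crystalline representations and ordinary weakly admissible filtered Frobenius modules on the other, glued by the crystalline comparison isomorphism. Two points deserve sharpening. First, the equivalence ``$X_k$ ordinary $\Leftrightarrow$ Newton $=$ Hodge for all $i$'' is not unconditional: the forward implication always holds, but the converse already requires torsion-freeness of $H^*_{\text{crys}}(X_k/W)$; this is harmless for you because you only use the forward direction in $(1)\Rightarrow(2)$ and you have the freeness hypothesis in $(2)\Rightarrow(1)$, but you should not state it as an unconditional equivalence. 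Second, in $(2)\Rightarrow(1)$ the two freeness hypotheses play two distinct and identifiable roles rather than the vaguely stated ``integral comparison'' role you assign them: freeness of $H^i(\mathfrak{X},\Omega^j_{\mathfrak{X}/\sO})$ guarantees that the Hodge polygon read off from $D_{\text{crys}}$ of the generic fibre agrees with the Hodge polygon of the special fibre (no jumping of Hodge numbers under specialization), and freeness of $H^*_{\text{crys}}(X_k/W)$ is exactly what is needed to run the Bloch--Kato/Illusie--Raynaud converse ``Newton $=$ Hodge $\Rightarrow$ ordinary''. In particular the rational comparison theorem suffices throughout; no Fontaine--Laffaille, $A_{\text{inf}}$, or prismatic input is needed, so your closing paragraph overstates the difficulty of the converse. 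With those adjustments the outline is a correct proof strategy.
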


\subsection{} For a prime number $l\in k^*$, Ivorra, \cite[(128)]{Ivorram}, and later Ayoub, \cite[functor (55)]{Ayoubladic} constructed the $l$-adic realization functor of motives:
\[
\Gamma_{k,l}:\DM^{\eff}_{gm}(k) \to D^{\et}(\Spec k,\Z_l), \text{~~~} [\pi:X \to k] \mapsto R\pi_*\pi^*\Z_l.
\]
Choose an algebraic closure $\ol{k}/k$. We set a functor
\[
\Gamma_{G_k.l}^{pre}:\DM^\eff_{gm}(k) \to \DM^\eff_{gm}(\ol{k}) \overset{\Gamma_{\ol{k},l}}{\to} D^{\et}(\Spec \ol{k},\Z_l),
\]
where the first functor is the natural base change along the finite extension $\ol{k}/K$. This functor sends $[\pi:X\to \Spec k]$ to $R\ol{\pi}_*\ol{\pi}^* \mathbb{Z}_l$, where $\ol{\pi}$ is a morphism $X\times_k \ol{k} \to \Spec \ol{k}$ comes from the base change of $\pi$ along the finite extension $\ol{k}/K$. For a motive $N\in \DM^\eff_{gm}(k)$ and a morphism $a:N \to M$ in $\DM^\eff_{gm}(k)$, $\Gamma_{G_k.l}^{pre}(N)$ and $\Gamma_{G_k.l}^{pre}(a)$ are equipped with an action by $G_k$. Thus we know that there is a functor $\Gamma_{G_k,l}:\DM^\eff_{\gm}(k) \to D(G_k \curvearrowright \Z_l\text{-mod})$ such that the functor $\Gamma_{G_k.l}^{pre}:\DM^\eff_{\text{gm}}(k) \to \DM^\eff_{\text{gm}}(\ol{k}) \to D^\et(\Spec \ol{k}, \Z_l)$ factors though it:
\[
\xymatrix{
\DM^{\eff}_{gm}(k) \ar[d]_{\Gamma_{G_k,l}} \ar[r] &\DM^{\eff}_{gm}(\ol{k}) \ar[r]^-{\Gamma_{\ol{k},l}} & D^{\et}(\Spec \ol{k},\Z_l) \\
D(G_k \curvearrowright \Z_l\text{-mod}) \ar[rru]_-{\text{forgetful}} & & 
}
\]
For a smooth projective variety $X$ over $k$, we have an isomorphism $H^i(\Gamma_{G_k,l}M(X))\simeq H^i(X_{\ol{k}},\Z_l)$. Also we know, for a natural number $j$, there is an isomorphism $H^i(\Gamma_{G_k,l}M(X)(j)[2j])\simeq H^{i-2j}(X_{\ol{k}},\Z_l)(-j)$. %
It is well known that $\{2n\}_n$ is a cohomological dimension of $\Gamma_{G_k,l}$ (see \cite[chapter VI, Theorem 1.1]{Milneetale}). For a natural number $m>6d$, we set 
\[
\Gamma_{G_{k},l,m}^j(X) = \left\{
\begin{array}{ll}
H^m(\Gamma_{G_k,l}M(X)(j)[2j])\simeq H^{m-2j}(X_{\ol{k}},\Z_l)(-j) & j>2d \\
~~~0 & j\leq 2d
\end{array}
\right.
\]
If $\dim X \leq d$, for $m >6d$, then we have an isomorphism
\[
\Gamma_{G_k,l,m}(X) = \bigoplus_{j>2d} \Gamma_{G_k,l,m}^j(X) \simeq \bigoplus_{j>2d} H^{m-2j}_{\et}(X_{\ol{k}},\Z_l)(-j) 
\]
since $H^j(X_\sigma^\text{an},R)=0$ for any $j>2d$, and by Proposition~\ref{2} we know $\Gamma_{G_k,l,m}$ is a $d$-Manin invariant. We note that if we take $m=6d+1$ then we obtain isomorphism of $\Z_l$-module with $G_k$ action:
\begin{equation}\label{6.2simeq}
\Gamma_{G_k,l,6d+1}(X)\otimes_{\Z_l} \Z_l(3d) \simeq \bigoplus_{i:\text{odd}} H^{i}_{\et}(X_{\ol{k}},\Z_l)(\frac{i-1}{2}),
\end{equation}
and if we take $m=6d+2$ then we obtain isomorphism of $\Z_l$-modules with $G_k$ action:
\begin{equation}\label{6.3simeq}
\Gamma_{G_k,l,6d+2}(X)\otimes_{\Z_l} \Z_l(3d+1) \simeq\bigoplus_{i:\text{even}} H^{i}_{\et}(X_{\ol{k}},\Z_l)(\frac{i}{2}).
\end{equation}

There is a $\Z_l$-linear functor $\Gamma_{G_k,l,m}:C_{k,\Z_l}^{\leq d} \to (G_k \curvearrowright \Z_l\text{-mod})$. If $l> \left\{
\begin{array}{ll}
3d+1 & \text{if }\ch(k)=0 \\
2d+e+1 & \text{if }\ch(k)>0
\end{array}
\right.$, then $\Z_l$ is a $\left\{\begin{array}{ll}
\Z[\frac{1}{(3d+1)!}]\text{-algebra} & \text{if }\ch(k)=0 \\
\Z[\frac{1}{(2d+e+1)!}]\text{-algebra} & \text{if }\ch(k)>0
\end{array}
\right.$. By the sequence of functors \eqref{ONIONI} there is a functor 
\begin{equation}\label{redrockch0}
\Gamma_{G_k,l,m}:\KM^{\leq d}(k)_R \to (G_k \curvearrowright \Z_l\text{-mod}) \qquad \qquad \text{   if }\ch(k)=0
\end{equation}
\begin{equation}\label{redrockchp}
\Gamma_{G_k,l,m}:\KM^{\leq d}_{(e)}(k)_R \to (G_k \curvearrowright \Z_l\text{-mod})\qquad \qquad \text{   if }\ch(k)>0
\end{equation}
The following theorem follows from the functor \eqref{redrockch0}, and Tabuada-Marcollini's result (Eq.\eqref{TVd} below) of non-commutative motives of an Azumaya algebra \cite[Theorem~2.1]{motiveazumaya}.

\begin{thm}\label{6.2}
Let $k$ be a field of characteristic $0$. Choose natural numbers $d$ and a prime number $l > 3d+1$, and a algebraic closure $\ol{k}/k$. For smooth projective varieties $X$ and $Y_1,...,Y_j$ whose dimension are less than $d$ and Azumaya algebras $\alpha_i$ on $Y_i$ such that there is a semi-orthogonal decomposition
\[
D^b(X_{\ol{k}}) \overset{\text{SOD}}{\simeq} <D^b(Y_{1,\ol{k}},\alpha_{1,\ol{K}}),...,D^b(Y_{j,\ol{k}},\alpha_{j,\ol{K}})>.
\]
Let us denote $\text{rank}~\alpha_i=r_i$. If $(l,r_i)=1$ for any $i$, then there is a finite extension $M/k$ such that for any finite extension $L/M$ there are isomorphisms of $\Z_l$-module with Galois action of $G_L$:
\[
\bigoplus_{i:\text{even}} H^{i}_{\et}(X_{\ol{k}},\Z_l)(\frac{i}{2}) \simeq  \bigoplus_{e=1}^{j}\bigoplus_{i:\text{even}} H^{i}_{\et}(Y_{e,\ol{k}},\Z_l)(\frac{i}{2})
\]
\[
\bigoplus_{i:\text{odd}} H^{i}_{\et}(X_{\ol{k}},\Z_l)(\frac{i-1}{2}) \simeq  \bigoplus_{e=1}^{j}\bigoplus_{i:\text{odd}} H^{i}_{\et}(Y_{e,\ol{k}},\Z_l)(\frac{i-1}{2}).
\]
\end{thm}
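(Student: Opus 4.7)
The plan is to combine Theorem~\ref{5.2} on descent of the semi-orthogonal decomposition with the Manin invariant realization functor~\eqref{redrockch0}, using Tabuada--Marcolli's identification of the motive of an Azumaya algebra with that of its base variety to remove the twists once each rank $r_e$ is inverted.

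First, I will apply Theorem~\ref{5.2} to obtain a finite extension $M/k$ over which the given semi-orthogonal decomposition already descends. For any finite extension $L/M$, Lemma~\ref{5.15} then propagates this decomposition to
\[
D^b(X_L) \overset{\text{SOD}}{\simeq} <D^b(Y_{1,L},\alpha_{1,L}),\dots,D^b(Y_{j,L},\alpha_{j,L})>.
\]
By the standard passage from semi-orthogonal decompositions to additive decompositions in non-commutative motives (equivalently, in the Gillet--Soul\'e $K$-motive category extended to accommodate Azumaya twists), this yields an isomorphism
\[
X_L \simeq \bigoplus_{e=1}^{j}(Y_{e,L},\alpha_{e,L})
\]
in the $\Z_l$-linearised $K$-motive category.

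Second, since $(l,r_e)=1$ for each $e$, the integer $r_e$ is a unit in $\Z_l$, so Tabuada--Marcolli's theorem \cite[Theorem~2.1]{motiveazumaya} identifies $(Y_{e,L},\alpha_{e,L})$ with $Y_{e,L}$ in the $\Z_l$-linearised $K$-motive category. Substituting these identifications factor by factor produces
\[
X_L \simeq \bigoplus_{e=1}^{j} Y_{e,L}
\]
in $\KM^{\leq d}(L)_{\Z_l}$, using that each $Y_e$ has dimension strictly less than $d$ by hypothesis.

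Third, the inequality $l>3d+1$ ensures that $\Z_l$ is a $\Z[\frac{1}{(3d+1)!}]$-algebra, so the Manin invariant realization functor~\eqref{redrockch0} gives a map from $\KM^{\leq d}(L)_{\Z_l}$ to $(G_L\curvearrowright\Z_l\text{-mod})$. Applying it with $m=6d+2$ and invoking~\eqref{6.3simeq} transports the above isomorphism into the desired even-degree isomorphism of $G_L$-representations, while $m=6d+1$ combined with~\eqref{6.2simeq} yields the odd-degree isomorphism.

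The main obstacle is rigorously handling the passage from the semi-orthogonal decomposition on $D^b(X_L)$ to a splitting in a $K$-motive category that hosts both the Azumaya twists and the realization functor to Galois representations, and verifying that the Tabuada--Marcolli identification is compatible with the $G_L$-action on coefficients. Once this categorical framework is laid out, each remaining step is a direct application of results already established in the paper.
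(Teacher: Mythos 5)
Your proposal is correct and follows essentially the same route as the paper's proof: descent of the semi-orthogonal decomposition via Theorem~\ref{5.2} (and Lemma~\ref{5.15}), the Tabuada--Marcolli identification with the ranks $r_e$ inverted in $\Z_l$ to obtain the splitting $X_L\simeq\bigoplus_e Y_{e,L}$ in the ($K$-)motive category, and then the realization functor~\eqref{redrockch0} together with~\eqref{6.2simeq} and~\eqref{6.3simeq}. The only cosmetic difference is that the paper routes the splitting through the noncommutative motive category $\KMM(L,\Z_l)$ and then transfers it to $\KM^{\leq d}$ via \cite[Corollary 3.1]{M1}, which is precisely the ``categorical framework'' step you flag as the remaining obstacle.
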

\begin{proof}
By Theorem~\ref{5.2}, there is a finite extension $M/K$ such that there is a semi-orthogonal decomposition 
\[
D^b(X_M)\overset{\text{SOD}}{\simeq}  <D^b(Y_{1,M},\alpha_{1,M}),...,D^b(Y_{j,M},\alpha_{j,M})>,
\] 
and for any finite extension $L/M$, we have a semi-orthogonal decomposition 
\[
D^b(X_L)\overset{\text{SOD}}{\simeq}  <D^b(Y_{1,L},\alpha_{1,L}),...,D^b(Y_{j,L},\alpha_{j,L})>.
\] 
Thanks to \cite[Theorem~2.1]{motiveazumaya}, we know an isomorphism
\begin{equation} \label{TVd}    
U(X_L)_{\Z_l} \simeq \bigoplus_{i=1}^{j}U(Y_{i,L})_{\Z_l}
\end{equation}
in $\KMM(L,{\Z_l})$. In fact, this follows from the above semi-orthogonal decomposition and the fact that $\Z_l$ is $\Z[\frac{1}{ r_1\cdot...\cdot r_j}]$-algebra. By \cite[Corollary 3.1]{M1}, we obtain an isomorphism $(X_L)_{\Z_l} \simeq \bigoplus_{i=1}^{j}(Y_{i,L})_{\Z_l}$ in $\KM^{\leq d}(k,\Z_l)$. The claim follows from the functors \eqref{redrockch0} and isomorphisms \eqref{6.2simeq} and \eqref{6.3simeq}.
\end{proof}

\subsection{} Let us recall the Serre's density conjecture for ordinary reduction.
\begin{conj}(Serre conjecture for ordinary density)\label{Serre2}
Let $X/K$ be a  smooth projective variety over a  number field $K$. Then there is finite extension $L/K$ such that a positive density of primes $v$ of $L$ for which $X_L$ has a good ordinary reduction at $v$.
\end{conj}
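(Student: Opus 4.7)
The plan is to combine the non-commutative motivic framework developed in Sections~2 and~5 with the $p$-adic Hodge-theoretic criteria recalled in Section~6.1. The strategy is to reduce, via semi-orthogonal decompositions, the ordinary reduction problem for an arbitrary $X$ to the same problem for simpler varieties $Y_1,\dots,Y_j$ (possibly twisted by Azumaya algebras) for which Conjecture~\ref{Serre2} is already unconditionally known — such as elliptic curves, abelian surfaces, K3 surfaces, abelian varieties with CM, and Fermat varieties.

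First I would seek a Kuznetsov-style semi-orthogonal decomposition
\[
D^b(X_{\ol K}) \overset{\text{SOD}}{\simeq} \langle D^b(Y_{1,\ol K}, \alpha_{1,\ol K}), \dots, D^b(Y_{j,\ol K}, \alpha_{j,\ol K})\rangle,
\]
in which every $(Y_i,\alpha_i)$ lies in one of the classes where the ordinary density statement is known. By Theorem~\ref{5.2} such a decomposition descends to a finite extension $M/K$, and by Lemma~\ref{5.15} it further descends along any finite extension $L/M$. After enlarging $L$ if necessary, we may arrange that each $Y_{i,L}$ itself has a positive density of primes of ordinary reduction.

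Next, for each prime $v$ of $L$ of good reduction with residue characteristic $p$ satisfying $p > 2d+e+1$ (where $d$ bounds the dimensions and $e$ an embedding dimension for $X$ and all $Y_i$) and coprime to the ranks of the $\alpha_i$, I would apply the $p$-adic analogue of Theorem~\ref{6.2}, obtained from the realisation functor \eqref{redrockchp} with $l$ replaced by $p$. Combined with Tabuada-Marcolli's identification of the non-commutative motive of an Azumaya algebra, this produces an isomorphism of $G_{L_v}$-representations
\[
\bigoplus_{i \text{ even}} H^i_{\et}(X_{\ol L}, \Z_p)(i/2) \simeq \bigoplus_{e'=1}^{j}\bigoplus_{i \text{ even}} H^i_{\et}(Y_{e',\ol L}, \Z_p)(i/2)
\]
and analogously in odd degree. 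The Fontaine-Messing comparison~\eqref{fontaine-Messing}, valid for $p$ large relative to $\dim X$, translates this into an isomorphism of $F$-crystals on crystalline cohomology of the special fibres; Achinger's criterion then converts ordinariness of the $H^*(X_{\widehat{\ol{L_v}}},\Q_p)$ as Galois representations into ordinariness of $X_{k_v}$ itself. Because each $Y_{i,L}$ has ordinary reduction on a set of primes of positive density, and since the exceptional primes (those dividing some rank $r_i$, below the $p$-adic Hodge bound, or where the integral models degenerate) form a finite set, a Chebotarev-type intersection argument shows that a positive density of $v$ simultaneously witness ordinariness on every factor $Y_{i,L}$, hence on $X_L$.

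The main obstacle — and the reason the general conjecture remains open — is the very first step: for an arbitrary smooth projective variety $X$ there is no known mechanism producing a semi-orthogonal decomposition of $D^b(X_{\ol K})$ into building blocks lying in the classes for which Conjecture~\ref{Serre2} is known. The proposal therefore reduces Conjecture~\ref{Serre2} to a purely categorical question in non-commutative algebraic geometry, and is effective only for those $X$ admitting such a decomposition (for instance cubic $4$-folds containing a $\PP^2$, whose Kuznetsov component is equivalent to the derived category of a twisted K3 surface). A complete proof in the general case would either require a structural motivic decomposition theorem covering arbitrary smooth projective varieties or an entirely different input beyond the techniques developed here.
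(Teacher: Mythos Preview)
The statement you were asked to prove is \emph{Conjecture}~\ref{Serre2}, which the paper explicitly leaves open: there is no proof of it in the paper, and none is claimed. What the paper does prove is the special case of cubic $4$-folds containing a $\PP^2$ (Theorem~\ref{6.4}), using precisely the strategy you outline --- Kuznetsov's semi-orthogonal decomposition with a twisted K3 component, descent to a finite extension via Theorem~\ref{5.2} and Lemma~\ref{5.15}, the $\Z_p$-linear Manin-invariant realisation combined with Tabuada--Marcolli, Fontaine--Messing, and Achinger's criterion. You yourself correctly identify that the obstruction in general is the absence of a suitable semi-orthogonal decomposition for arbitrary $X$; this is exactly why the statement remains a conjecture, and your proposal is therefore not a proof but a reduction whose hypothesis is not known to hold.

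One technical correction to your outline, relevant even in the special case: since the base field $L$ has characteristic $0$, the realisation functor to use is \eqref{redrockch0}, not \eqref{redrockchp}, and the correct bound on the residue characteristic is $p>3d+1$ rather than $p>2d+e+1$. This is what the paper does in the proof of Theorem~\ref{6.4} (with $d=4$, giving $p>13$). The functor \eqref{redrockchp} applies to varieties over a field of positive characteristic and is not what is needed here.
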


We prove the following theorem.


\begin{thm}\label{6.4}
Let $X$ be a cubic $4$-fold which contains $\PP^2$ over a number field $K$. Then $X$ satisfies the conjecture~\ref{Serre2}. Moreover the density of the set of finite primes in which $X_L$ has a good ordinary is one.
\end{thm}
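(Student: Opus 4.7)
The plan is to exploit Kuznetsov's theorem giving a semi-orthogonal decomposition of $D^b(X)$ with a K3 surface component, and then to use the descent of SODs (Theorem~\ref{5.2}) together with the Galois-equivariant realization of Theorem~\ref{6.2} to reduce the ordinary-reduction question for the cubic $4$-fold $X$ to the known case of K3 surfaces.

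First, by a theorem of Kuznetsov, when $X$ is a cubic $4$-fold containing a plane, there exist a K3 surface $S$ over $\ol{K}$ and a Brauer class $\alpha\in \Br(S)$ of order dividing $2$ (obtained from the quadric fibration given by projecting $X$ away from the plane), together with a semi-orthogonal decomposition
\[
D^b(X_{\ol{K}}) \overset{\text{SOD}}{\simeq} \langle D^b(S,\alpha),\, \sO_{X_{\ol{K}}},\, \sO_{X_{\ol{K}}}(1),\, \sO_{X_{\ol{K}}}(2)\rangle.
\]
By Theorem~\ref{5.2}, after replacing $K$ by a suitable finite extension $L$ we may arrange that $S$, $\alpha$ and this decomposition all descend to $L$. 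For any odd prime $\ell>13$ we have $(\ell,\mathrm{rank}\,\alpha)=1$, so Theorem~\ref{6.2} yields $G_L$-equivariant isomorphisms of $\Z_\ell$-modules
\[
\bigoplus_{i} H^{2i}_\et(X_{\ol{L}},\Z_\ell)(i) \;\simeq\; \bigoplus_{i} H^{2i}_\et(S_{\ol{L}},\Z_\ell)(i) \,\oplus\, \bigoplus_{j=0}^{2}\Z_\ell,
\]
together with the analogous identification in odd degree (both sides vanishing).

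Next, by Bogomolov-Zarhin \cite{K3ordinary}, after a further finite extension of $L$ the K3 surface $S_L$ has good ordinary reduction at a set of primes of positive density; we upgrade this to density one using the Mumford-Tate conjecture for K3 surfaces (Tankeev, Andr{\'e}) together with equidistribution of Frobenius conjugacy classes in the $\ell$-adic monodromy group, which forces the ordinary Newton stratum (open and dense in the polarized K3 moduli) to be hit on a density-one set of primes. At any such prime $v$, of residue characteristic $p$, the representation $H^i_\et(S_{\ol{L}},\Q_p)$ is ordinary. Choosing $\ell=p$ in the displayed isomorphism (possible for all sufficiently large odd $p$, which excludes only finitely many $v$) and noting that each Tate summand $\Z_p(i)$ is crystalline and ordinary, we deduce that every $H^i_\et(X_{\ol{L}},\Q_p)$ is ordinary as well.

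To conclude, $X$ has good reduction at all but finitely many primes, and its integral cohomology is torsion-free (by Lefschetz plus the smooth-hypersurface description), so the crystalline and Hodge cohomology of the integral model are free over $W$ and $\sO$ respectively. Achinger's criterion (Proposition~6.7 of \cite{AchingerArxiv}) then upgrades the Galois-ordinariness of the previous step to genuine good ordinary reduction of $X_L$ at $v$, proving both Conjecture~\ref{Serre2} and the density-one statement. The main obstacle is the upgrade from positive to density-one ordinariness of $S_L$: a weakening there still yields Conjecture~\ref{Serre2} (positive density suffices) but would lose the ``moreover''.
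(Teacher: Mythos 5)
Your plan follows essentially the same route as the paper: Kuznetsov's decomposition $\langle D^b(S,\alpha),\sO,\sO(1),\sO(2)\rangle$, descent of the SOD via Theorem~\ref{5.2}, the Galois-equivariant comparison of Theorem~\ref{6.2} with $\ell=p>13$, reduction of ordinariness of the Galois representation of $X$ to that of the K3 surface, and Achinger's criterion to convert Galois-ordinariness into ordinary reduction. Two remarks. First, the detour through the Mumford--Tate conjecture and equidistribution is unnecessary: Bogomolov--Zarhin already prove density \emph{one} for ordinary reduction of K3 surfaces after a finite extension, which is all the paper uses for the ``moreover'' clause.

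Second, there is a genuine gap in your concluding step. You assert that torsion-freeness of the integral cohomology of $X$ (which is correct for a smooth cubic fourfold by Lefschetz and Poincar\'e duality) implies that ``the crystalline and Hodge cohomology of the integral model are free over $W$ and $\sO$ respectively.'' Neither implication is automatic. For crystalline cohomology you need the Fontaine--Messing comparison $H^i_{\mathrm{crys}}(X_k/W)\simeq H^i_{\et}(X_{\widehat{\ol{K}}},\Z_p)\otimes_{\Z_p}W$, valid only for $i<p-1$ (hence the hypothesis $p>13>8+1$); the paper invokes this explicitly, and without it torsion-freeness of the \'etale side says nothing about the crystalline side. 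For the Hodge cohomology $H^i(\mathfrak{X}_{\mathfrak{p}},\Omega^j)$ of the integral model, freeness requires a separate argument: the paper spreads out a model over an open part of $\Spec\sO_L$ and observes that the torsion of the finitely generated modules $H^i(\mathfrak{X}_R,\Omega^j_{\mathfrak{X}_R/R})$ is supported on finitely many primes, so condition (c-3) holds away from a finite set. Both repairs are available with the tools already in the paper, but as written your final paragraph does not establish the freeness hypotheses that Achinger's Proposition~6.7 requires.
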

\begin{proof}
In \cite{Kuznetsov}, Kuznetsov proved that there is a K3 surface $S$ over $\ol{K}$ and an Azumaya algebra $\alpha$ of rank $2$ on $S$ satisfying the following semi-orthogonal decomposition (see \cite[Theorem~4.3]{Kuznetsov})
\[
D^b(X_{\ol{K}})\overset{\text{SOD}}{\simeq} <D^b(S,\alpha),\sO_{X_{\ol{K}}},\sO_{X_{\ol{K}}}(1),\sO_{X_{\ol{K}}}(2)>.
\]
Take a finite extension $M/K$ such that $S$ and $\alpha$ are defined by a K3 surface ${S}_M$ and $\alpha_{M}$ over $M$. By Theorem~\ref{5.2}, there is a finite extension $L'/M$ such that there is a semi-orthogonal decomposition 
\[
D^b(X_{L'})\overset{\text{SOD}}{\simeq} <D^b(S_{L'},\alpha_{L'}),\sO_{X_{L'}},\sO_{X_{L'}}(1),\sO_{X_{L'}}(2)>.
\]
Since a K3 surface ${S}_{L'}$ satisfies the conjecture~\ref{Serre2} (see \cite{K3ordinary} or \cite{Joshi}), there is a finite extension $L/L'$ such that a positive density of primes $v$ of $L$ for which $\tilde{S}_L$ has a good ordinary reduction at $v$. By Lemma~\ref{5.15}, there is a semi-orthogonal decomposition 
\[
D^b(X_{L})\overset{\text{SOD}}{\simeq} <D^b(S_{L},\alpha_{L}),\sO_{X_{L}},\sO_{X_{L}}(1),\sO_{X_{L}}(2)>.
\]
For a finite prime $\mathfrak{p}$ on $L$, we consider the following conditions where we write $p$ for the characteristic of the residue field of $\mathfrak{p}$.
\begin{itemize}
    \item[(c-1)] $p >13$.
    \item[(c-2)] $X_L$ and $\tilde{S}_L$ have good reduction at $\mathfrak{p}$.
    \item[(c-3)] The Hodge cohomology $H^i(\mathfrak{X}_{{\mathfrak{p}}},\Omega^j_{\mathfrak{X}_{{\mathfrak{p}}}/\sO_{L_\mathfrak{p}}})$ are free $\sO_{L_{\mathfrak{p}}}$-modules for any $i,j$ where $\mathfrak{X}_{\mathfrak{p}}$ is an integral model of $X_{L_{\mathfrak{p}}}$ over $\sO_{L_{\mathfrak{p}}}$ and $L_{\mathfrak{p}}$ is the completion of $L$ at $\mathfrak{p}$, and $\sO_{L_{\mathfrak{p}}}$ is the integral ring of it.
    \item[(c-4)] $\tilde{S}_L$ has ordinary reduction at $\mathfrak{p}$
    \item[(c-5)] $L_{\mathfrak{p}}$ is unramified over $\Q_p$.
\end{itemize}
We first prove that all but finitely many finite primes $\mathfrak{p}$ satisfy the condition (c-3). Take a non-empty open subscheme $\Spec R \hookrightarrow \Spec \sO_L$ such that $X_L$ has a projective integral model $\mathfrak{X}_R$ over $R$. Consider the subset $U$ of $\Spec R$ of the union of supports of Hodge cohomology group $H^i(\mathfrak{X}_R,\Omega^j_{\mathfrak{X}_R/R})$ as f.g.$R$-modules:
\[
U=\displaystyle\bigcup_{i,j} \text{Supp}~\bigl( \text{Tor}~ H^i(\mathfrak{X}_R,\Omega^j_{\mathfrak{X}_R/R}) \bigr).
\]
The sub set $U$ is finite subset in $\Spec R$ and does`t contain the generic point of $\Spec R$. For a finite prime $\mathfrak{p}$ on $L$ such that $\mathfrak{p}\in \Spec R$, consider the base change diagram of $\mathfrak{X}_R \to \Spec R$ along the natural morphism $R \to \sO_{L_{\mathfrak{p}}}$. Since the morphism of rings $R \to \sO_{L_{\mathfrak{p}}}$ is flat, by the base change theorem of coherent cohomology we have an isomorphism of $\sO_{L_\mathfrak{p}}$-module:
\[
H^i(\mathfrak{X}_R,\Omega^j_{\mathfrak{X}_R/R}) \otimes_R \sO_{L_\mathfrak{p}} \simeq H^i(\mathfrak{X}_{{\mathfrak{p}}},\Omega^j_{\mathfrak{X}_{{\mathfrak{p}}}/\sO_{L_\mathfrak{p}}})
\]
for any $i,j$. Thus if $\mathfrak{p}$ is not in $U$ then $H^i(\mathfrak{X}_{{\mathfrak{p}}},\Omega^j_{\mathfrak{X}_{{\mathfrak{p}}}/\sO_{L_\mathfrak{p}}})$ are f.g.free $\sO_{L_\mathfrak{p}}$-module for any $i,j$. We know the subset $\Spec R \backslash  U$ is an open subscheme of $\Spec \sO_L$, so we obtain that for all but finitely many finite primes,  $\mathfrak{p}$ satisfies the condition (c-3). It is easy to check that for all but finitely many finite primes,  $\mathfrak{p}$ satisfies conditions (c-1), (c-2) and (c-5).

By the definition of the finite extension $L/M$, there is a positive density of primes $v$ of $L$ for which $\tilde{S}_L$ has a good ordinary reduction at $v$. we obtain that the set of finite primes
\begin{center}
$S=$\{$\mathfrak{p}$:finite prime on $L$~$|$~$\mathfrak{p}$ satisfies (c-1) $\sim $ (c-5)\}
\end{center}
has density one. 

Let us prove that for any finite prime $\mathfrak{p}$ in $S$, $X_L$ has ordinary reduction at $\mathfrak{p}$. By the condition (c-1) $p>13=3\times 4+1$, the functors \eqref{redrockch0} and isomorphisms \eqref{6.2simeq} and \eqref{6.3simeq} induces following isomorphisms of $\Z_p$-module with $G_{L_{\mathfrak{p}}}$ action:
\[
\bigoplus_{i:\text{even}} H^{i}_{\acute{e}t}(X_{\widehat{\ol{L}}_{\mathfrak{p}}},\Z_p)(i/2)\simeq \bigoplus_{i:\text{even}}H^{i}_{\acute{e}t}(\tilde{S}_{\widehat{\ol{L}}_{\mathfrak{p}}},\Z_p)(i/2) \oplus \Z_p^{\oplus 2}
\]
\[
\bigoplus_{i:\text{odd}} H^{i}_{\acute{e}t}(X_{\widehat{\ol{L}}_{\mathfrak{p}}},\Z_p)((i+1)/2) \simeq \bigoplus_{i:\text{odd}}H^{i}_{\acute{e}t}(\tilde{S}_{\widehat{\ol{L}}_{\mathfrak{p}}},\Z_p)((i+1)/2)
\]
Choose an isomorphism of fields $\sigma:\widehat{\ol{L}}_{\mathfrak{p}}\simeq \mathbb{C} $, then we have a non-canonical isomorphism of $\Z_p$-modules
\[
H^{i}_{\acute{e}t}(\tilde{S}_{\widehat{\ol{L}}_{\mathfrak{p}}},\Z_p) \simeq H^i_{Sing}(\tilde{S}_{\sigma}^{\text{an}},\Z)\otimes_Z\Z_p
\]
where $\tilde{S}_{\sigma}$ is the base change of $\tilde{S}_{\widehat{\ol{L}}_{\mathfrak{p}}}$ along $\sigma$. Since $\tilde{S}_{\sigma}$ is a complex K3 surface, the singular cohomology $H^i_{Sing}(\tilde{S}_{\sigma}^{\text{an}},\Z)$ is torsion free for $i=0,2,4$, and is $0$ for $i=1,3$. Thus we now know the $\Z_p$-module $H^{i}_{\acute{e}t}(X_{\widehat{\ol{L}}_{\mathfrak{p}}},\Z_p)$ is free $\Z_p$-module for any $i$. By conditions (c-1) and (c-5), we can apply Fontaine-Messing theorem for $X$, we obtain the following isomorphism (see isomorphism~\eqref{fontaine-Messing}):
\[
H^i_{\text{crys}}(X_{L,\kappa(\mathfrak{p})}/W(\kappa(\mathfrak{p})))\simeq H^{i}_{\acute{e}t}(X_{\widehat{\ol{L}}_{\mathfrak{p}}},\Z_p)\otimes_{\Z_p} W(\kappa(\mathfrak{p}))
\]
for any $i$ (for $0\leq i\leq 2\dim X_{L,\kappa(\mathfrak{p})}=8 < 12 \overset{(c-1)}{<} p-1 $, the isomorphism is induced by Fontaine-Messing theorem, and for $i>8$ the both of side are zero). Thus we have that $H^i_{\text{crys}}(X_{L,\kappa(\mathfrak{p})}/W(\kappa(\mathfrak{p})))$ are free $W(\kappa(\mathfrak{p}))$-modules. Thus we can apply Achinger`s result \cite[Proposition 6.7]{AchingerArxiv} for $\mathfrak{X}_{\mathfrak{p}}$ where we use the condition (c-3), i.e. to show $X_{\kappa(\mathfrak{p})}$ is ordinary it is enough to show that the $p$-adic Galois representation $G_{L_{\mathfrak{p}}}\curvearrowright H^{i}_{\acute{e}t}(X_{\widehat{\ol{L}}_{\mathfrak{p}}},\Q_p)$ is ordinary representation for any $i$. By the condition (c-5), we know the $p$-adic Galois representation $G_{L_{\mathfrak{p}}}\curvearrowright H^{i}_{\acute{e}t}(\tilde{S}_{\widehat{\ol{L}}_{\mathfrak{p}}},\Q_p)$ is ordinary representation for any $i$. Since an ordinary $p$-adic representation twisted by the Tate twisted $\Q_p(i)$ is also ordinary, and a direct summand of an ordinary representation is also ordinary, by the isomorphism
\[
\bigoplus_{i:\text{even}} H^{i}_{\acute{e}t}(X_{\widehat{\ol{L}}_{\mathfrak{p}}},\Z_p)(i/2)\simeq \bigoplus_{i:\text{even}}H^{i}_{\acute{e}t}(\tilde{S}_{\widehat{\ol{L}}_{\mathfrak{p}}},\Z_p)(i/2) \oplus \Z_p^{\oplus 2}
\]
we know $H^{i}_{\acute{e}t}(X_{\widehat{\ol{L}}_{\mathfrak{p}}},\Z_p)$ is an ordinary representation if $i$ is even, and we have $\bigoplus_{i:\text{odd}} H^{i}_{\acute{e}t}(X_{\widehat{\ol{L}}_{\mathfrak{p}}},\Z_p)(i/2)=0$.
\end{proof}

\bibliography{bib}
\bibliographystyle{alpha}

%

%
\end{document}